\documentclass[preprint,12pt]{elsarticle}

\usepackage{amssymb}
\usepackage{amsmath}
\usepackage{amsthm}

\usepackage{xcolor}
\usepackage{hyperref}
\usepackage{natbib}
\usepackage{tikz}
\usetikzlibrary{arrows}
\newtheorem{defn}{Definition}
\newtheorem{thm}{Theorem}

\newtheorem{cor}{Corollary}

\newdefinition{rmk}{Remark}

\journal{}

\begin{document}

\begin{frontmatter}



\title{Higher-Order Quadripolar Argumentation Framework and Encoded Semantics}


\author[a]{Shuai Tang\corref{cor1}} 
\ead{TangShuaiMath@outlook.com}

\cortext[cor1]{Corresponding author}

\begin{abstract}
	This paper introduces the Higher-Order Quadripolar Argumentation Framework (HQAF), which extends bipolar frameworks by allowing attacks and three types of supports---necessary, deductive, and evidential---to interact at higher order. In HQAF, each interaction may have as its source and target an argument, an attack, or a support. We provide a 3-valued semantics via adjacent complete labellings and equational systems, and define encoded semantics directly in propositional logics, mainly Łukasiewicz three-valued logic for the 3-valued case and fuzzy logics based on continuous t-norms (Gödel, Product, Łukasiewicz) for the fuzzy case. Model equivalence between HQAF and its normal encoding is established. The fuzzy equational semantics is shown to be a specific formalism of the fuzzy encoded semantics, and ternarization connects fuzzy models to 3-valued complete labellings. The study is confined to non-set HQAF; collective interactions are reserved for future work.
\end{abstract}


\begin{keyword}


Higher-Order Argumentation Framework\sep Quadripolar Relation\sep Equational Semantics\sep Encoded Semantics\sep Propositional Logic

\MSC 68T27 \sep 03B70 \sep 03B50
\end{keyword}

\end{frontmatter}



\section{Introduction}
\label{sec:intro}

Argumentation has become an essential paradigm in Artificial Intelligence for reasoning from incomplete and contradictory information, and for modelling the exchange of arguments between agents \cite{Dung1995,Rahwan2009,Bench-Capon2007}. Dung's abstract argumentation framework (AF) \cite{Dung1995} consists of a set of arguments and a binary attack relation, and determines acceptable sets of arguments called extensions. Over the last decades, AFs have been extended along two main directions: adding positive interactions expressed by a support relation, leading to bipolar argumentation frameworks (BAF) \cite{Cayrol2005,Amgoud2008,Cayrol2013}; and allowing higher-order interactions, i.e., attacks or supports whose targets are other attacks or supports \cite{Barringer2005,Modgil2009,Baroni2011,Cohen2015}.

The notion of support in BAF is left abstract, and several specialized interpretations have been proposed. Necessary support \cite{Nouioua2010,Nouioua2011,Nouioua2013} captures the intuition that the acceptance of the target requires the acceptance of each support source, or equivalently, the existing non-acceptance of the support source implies the non-acceptance of the target. Deductive support \cite{Boella2010} captures the dual intuition that the acceptance of the source implies the acceptance of the target. As observed by Cayrol and Lagasquie-Schiex \cite{Cayrol2013}, necessary and deductive supports are dual notions in the following sense: \(A\) necessarily supports \(B\) if and only if \(B\) deductively supports \(A\). Evidential support \cite{Oren2008,Oren2010} distinguishes prima-facie arguments from standard ones, where standard arguments must be supported by a chain rooted in prima-facie arguments. Comparative studies and translations between these interpretations are given in \cite{Cayrol2013,Polberg2014}. However, existing bipolar frameworks typically consider at most one support interpretation at a time.

A second direction extends AFs with higher-order interactions. AFRA \cite{Baroni2011} and RAF \cite{Cayrol2017} allow attacks to target attacks. More recently, ASAF \cite{Cohen2015,Gottifredi2018} combines recursive attacks with necessary support; RAFN \cite{Cayrol2018a} handles higher-order necessary supports; and REBAF \cite{Cayrol2018b} handles higher-order evidential supports. Logical encodings for RAFN and REBAF are developed in \cite{Cayrol2020,Lagasquie-Schiex2021a,Lagasquie-Schiex2021b,Lagasquie-Schiex2023}. These frameworks are bipolar: they pair attacks with one kind of support, and often restrict support sources to sets of arguments. Gabbay \cite{Gabbay2016} introduces multipolar and tripolar argumentation networks, but the framework is not higher-order and lacks encoded semantics. Tang \cite{Tang2025} proposes HAFS, a higher-order framework with necessary supports, and encodes it into propositional logic. A related evidential higher-order set framework is studied in \cite{Tang2026}. Thus, a uniform framework that simultaneously accommodates attacks, necessary supports, deductive supports, and evidential supports—allowing them to act as sources and targets at higher order—is still missing.

To fill this gap, this paper proposes a \emph{Higher-Order Quadripolar Argumentation Framework} (HQAF). HQAF extends existing bipolar and multipolar frameworks by incorporating four kinds of interactions: attacks, necessary supports, deductive supports, and evidential supports. In HQAF, not only arguments but also attacks and supports can act as sources and targets of interactions, yielding a uniform treatment of higher-order interactions. We define a suite of semantics for HQAF, including adjacent complete labelling semantics (a 3-valued semantics), 3-valued equational semantics, and encoded semantics. We directly define the​ normal encoded semantics by interpreting the normal encoded formula of HQAF in propositional logic systems: the 3-valued semantics is encoded into three-valued propositional logic (typically Łukasiewicz's three-valued propositional logic), and the fuzzy semantics is obtained by encoding HQAF into fuzzy propositional logics such as Gödel, Product, and Łukasiewicz fuzzy logics. We prove model equivalence between fuzzy equational semantics and fuzzy normal encoded semantics, establishing the logical foundation of HQAF semantics. In particular, the three key fuzzy equational semantics are not introduced separately; they arise directly as special cases of the fuzzy encoded semantics. Additionally, we investigate the relationships between 3-valued complete semantics and fuzzy encoded semantics, showing that models of fuzzy encoded semantics can be transformed into complete semantics models via ternarization, and vice versa for specific t-norms.

The main contributions of this paper are summarized as follows.
\begin{itemize}
	\item We propose HQAF as a syntactic extension of existing bipolar and multipolar frameworks, enabling mutual interactions among attacks and three types of supports (necessary, deductive, and evidential), and allowing attacks and supports to serve as both targets and sources.
	\item We define adjacent complete labelling semantics and 3-valued equational semantics for HQAF, and directly define encoded semantics in propositional logics, where the fuzzy equational semantics is obtained as a derivative of the fuzzy normal encoded semantics.
	\item We investigate the relationships between 3-valued complete semantics and fuzzy encoded semantics via ternarization.
\end{itemize}

The remainder of the paper is organized as follows. Section~\ref{sec:prelim} presents background on propositional logic systems and reviews necessary-based and evidential-based higher-order argumentation frameworks. Section~\ref{sec:hqaf-syntax} introduces the syntax and basic semantics of HQAF. Section~\ref{sec:encoding} develops the encoded semantics for HQAF, including discrete normal encoding, continuous fuzzy operator-based equational semantics, and continuous fuzzy normal encoding, and establishes their equivalence. Section~\ref{sec:related} discusses related work, and Section~\ref{sec:conclusion} concludes with future directions.

\section{Preliminaries}\label{sec:prelim}
In \cite{Tang2025} the author proposes the higher-order argumentation framework with supports (HAFS). In \cite{Tang2026} the author introduces the evidential-based higher-order set argumentation framework (EHSAF).
As presented in the two papers, the encoded semantics are obtained by encoding a framework into a chosen $\mathcal{PL}$ and solving the models of the encoded formula.
In this section, we first review some basic knowledge of the $\mathcal{PL}$ and then review the syntax and encoded semantics for the two kinds of frameworks, where we rename the HAFS as the necessary-based higher-order argumentation framework (NHAF) and restrict the EHSAF as the non-set style named the evidential-based higher-order argumentation framework (EHAF). 
\subsection{Propositional Logic Systems}

We presuppose basic familiarity with propositional logic ($\mathcal{PL}$) systems; readers seeking a more thorough exposition may consult \cite{Klir1995,Hajek1998,Klement2000,Bergmann2008,Belohlavek2017}. In this subsection, we fix notation and recap the core definitions needed for subsequent developments.

At the meta-level, we use $A \Longleftrightarrow B$ to abbreviate the statement ``$A$ if and only if $B$'', and $B \Longrightarrow A$ to abbreviate ``if $B$ then $A$'', where $A$ and $B$ range over arbitrary meta-language sentences.

A formal language $\mathcal{L}$ for a propositional logic is specified by three components:

\begin{enumerate}
	\item \textbf{Alphabet.} The alphabet of $\mathcal{L}$ consists of:
	\begin{itemize}
		\item A countably infinite set $\mathit{Var} = \{p_1, p_2, p_3, \dots\}$ of propositional variables;
		\item A collection of logical connectives $\{\neg, \wedge, \vee, \rightarrow, \leftrightarrow\}$, where $\neg$ is a unary connective (negation), and $\wedge, \vee, \rightarrow, \leftrightarrow$ are binary connectives (conjunction, disjunction, implication, and equivalence, respectively);
		\item Auxiliary punctuation: the left and right parentheses ``('' and ``)''.
	\end{itemize}
	
	\item \textbf{Syntax (Formation Rules).} The set $F_{\mathcal{PL}}$ of \emph{well-formed formulas} (wffs) is the smallest set generated by the following inductive rules:
	\begin{itemize}
		\item Every propositional variable $p \in \mathit{Var}$ is a wff, i.e.\ $p \in F_{\mathcal{PL}}$;
		\item If $\varphi \in F_{\mathcal{PL}}$, then its negation $(\neg \varphi)$ is also a wff;
		\item If $\varphi, \psi \in F_{\mathcal{PL}}$ and $\circ \in \{\wedge, \vee, \rightarrow, \leftrightarrow\}$, then the compound formula $(\varphi \circ \psi)$ is a wff.
	\end{itemize}
	
	\item \textbf{Notational Conventions.} To simplify presentation, we adopt the following standard conventions:
	\begin{itemize}
		\item Outermost parentheses are omitted when no ambiguity arises;
		\item Connectives are ordered by precedence (from highest to lowest): $\neg$, $\wedge$, $\vee$, $\rightarrow$, $\leftrightarrow$;
		\item The propositional constants $\top$ (tautology) and $\bot$ (contradiction) are taken as primitive symbols.
	\end{itemize}
\end{enumerate}

In this work, we exclusively consider propositional logics whose truth-value domains are subsets of the unit interval $[0,1]$, together with their corresponding numerical semantics. Under this approach, each connective receives a fixed interpretation as a truth function (operator) on the domain. We use the notation $\|\cdot\|$ uniformly to denote both an assignment (model) defined on $\mathit{Var}$ and the unique extension of this assignment to evaluate all well-formed formulas. For every such $\mathcal{PL}$ and every assignment $\|\cdot\|$, we require the constant conditions $\|\bot\| = 0$ and $\|\top\| = 1$, and we define equivalence by $a \leftrightarrow b := (a \rightarrow b) \wedge (b \rightarrow a)$ for arbitrary formulas $a$ and $b$.

\subsubsection{Łukasiewicz Three-Valued Logic}

We now recall the three-valued propositional logic $\mathcal{PL}_3^L$ originally due to Łukasiewicz \cite{Lukasiewicz1970}. The connectives $\neg$, $\wedge$ and $\vee$ are interpreted pointwise as follows:
\begin{itemize}
	\item $\|\neg a\| = 1 - \|a\|$,
	\item $\|a \wedge b\| = \min\left\{\|a\|, \|b\|\right\}$,
	\item $\|a \vee b\| = \max\left\{\|a\|, \|b\|\right\}$,
\end{itemize}
where $a$ and $b$ range over arbitrary propositional formulas.

The implication connective $\rightarrow$ is interpreted via the three-valued residuum operator $\Rightarrow$, so that $\|a \rightarrow b\| = \|a\| \Rightarrow \|b\|$. The truth-degree table for $\Rightarrow$ is given in Table~\ref{Rightarrow}.

\begin{table}[htbp]
	\centering
	\renewcommand{\arraystretch}{1.2}
	\begin{tabular}{c | c c c}
		$\Rightarrow$ & $0$ & $\frac{1}{2}$ & $1$ \\ 
		\hline
		$0$ & $1$ & $1$ & $1$ \\
		$\frac{1}{2}$ & $\frac{1}{2}$ & $1$ & $1$ \\
		$1$ & $0$ & $\frac{1}{2}$ & $1$
	\end{tabular}
	\caption{Truth‑degree table of $\Rightarrow$ in $\mathcal{PL}_3^L$.}
	\label{Rightarrow}
\end{table}

Given the definition of equivalence, we immediately obtain:
\[
\|a \leftrightarrow b\| 
= \min\left\{\|a \rightarrow b\|, \|b \rightarrow a\|\right\} 
= \min\left\{\|a\| \Rightarrow \|b\|, \|b\| \Rightarrow \|a\|\right\}.
\]
Hence the equivalence connective is interpreted by the operator $\Leftrightarrow$, whose truth-degree table is displayed in Table~\ref{Leftrightarrow}.

\begin{table}[htbp]
	\centering
	\renewcommand{\arraystretch}{1.2}
	\begin{tabular}{c | c c c}
		$\Leftrightarrow$ & $0$ & $\frac{1}{2}$ & $1$ \\ 
		\hline
		$0$ & $1$ & $\frac{1}{2}$ & $0$ \\
		$\frac{1}{2}$ & $\frac{1}{2}$ & $1$ & $\frac{1}{2}$ \\
		$1$ & $0$ & $\frac{1}{2}$ & $1$
	\end{tabular}
	\caption{Truth‑degree table of $\Leftrightarrow$ in $\mathcal{PL}_3^L$.}
	\label{Leftrightarrow}
\end{table}

\subsubsection{Continuous Fuzzy Logics and T-Norms}

Next, we review the core operators of fuzzy propositional logics over the continuous domain $[0,1]$, referred to as $\mathcal{PL}_{[0,1]}$ systems \cite{Klir1995,Hajek1998,Klement2000}. In these logics, $\neg$ is interpreted as a negation function, $\wedge$ as a triangular norm (t-norm), and $\rightarrow$ as the residuated implication induced by the t-norm.

A negation function $N$ is called the \emph{standard negation} if $N(m) = 1 - m$ for all $m \in [0,1]$. The three most prominent continuous t-norms, together with their corresponding $R$-implications, are listed below (for all $m, n \in [0,1]$):
\begin{itemize}
	\item \emph{Gödel t-norm}: $T_G(m, n) = \min\{m, n\}$, with residuum
	\[
	I_G(m, n) = 
	\begin{cases}
		1 & m \leq n, \\
		n & m > n;
	\end{cases}
	\]
	\item \emph{Łukasiewicz t-norm}: $T_L(m, n) = \max\{0, m + n - 1\}$, with residuum $I_L(m, n) = \min\{1 - m + n, 1\}$;
	\item \emph{Product t-norm}: $T_P(m, n) = m \cdot n$, with residuum
	\[
	I_P(m, n) =
	\begin{cases}
		1 & m \leq n, \\
		\frac{n}{m} & m > n.
	\end{cases}
	\]
\end{itemize}

A fundamental property of any residuated implication $I$ is that $I(m, n) = I(n, m) = 1$ if and only if $m = n$ for all $m, n \in [0,1]$. As a consequence, $\|a \leftrightarrow b\| = 1$ holds precisely when $\|a\| = \|b\|$, since both directions of the implication must evaluate to $1$.

In the remainder of the paper, we use the following notation for the three canonical fuzzy logics, all equipped with the standard negation \cite{Esteva2000}:
\begin{itemize}
	\item $\mathcal{PL}_{[0,1]}^G$: the logic based on the Gödel t-norm $T_G$ and its residuum $I_G$;
	\item $\mathcal{PL}_{[0,1]}^P$: the logic based on the Product t-norm $T_P$ and its residuum $I_P$;
	\item $\mathcal{PL}_{[0,1]}^L$: the logic based on the Łukasiewicz t-norm $T_L$ and its residuum $I_L$.
\end{itemize}
\subsection{The syntax and encoded semantics of NHAF}
\begin{defn}[\cite{Tang2025}]\label{pNHAF}
	A preparatory necessary-based higher-order argumentation framework ($pre\text{-}NHAF$) is a 4-tuple $(\mathbb{A}^N, \mathbb{R}_k^N, \mathbb{R}_n^N, \mathbb{U}^N)$, where $\mathbb{A}^N$ is a set of arguments, $\mathbb{R}_k^N$ is named the attack relation, $\mathbb{R}_n^N$ is named the necessary support relation, $\mathbb{R}_k^N\cup \mathbb{R}_n^N\subseteq (\mathbb{A}^N \cup \mathbb{R}_k^N \cup\mathbb{R}_n^N) \times (\mathbb{A}^N \cup \mathbb{R}_k^N \cup\mathbb{R}_n^N)$, the finite universal set for the pre-NHAF is $\mathbb{U}^N=\mathbb{A}^N \cup \mathbb{R}_k^N\cup\mathbb{R}_n^N$.
\end{defn}

\begin{defn}[\cite{Tang2025}]\label{NHAF}
	Given a $pre\text{-}NHAF = (\mathbb{A}^N, \mathbb{R}_k^N, \mathbb{R}_n^N, \mathbb{U}^N)$, a \emph{necessary-based higher-order argumentation framework} (NHAF)  is a 4-tuple $(\mathbf{A}^N, \mathbf{R_k}^N,$ $\mathbf{R_n}^N, \mathbf{U}^N)$, where the argument set $\mathbf{A}^N=\mathbb{A}^N\cup\{\bot\}\cup\{\top\}$, the attack relation $\mathbf{R_k}^N=\mathbb{R}_k^N\cup \{(\bot, \beta)\mid \beta\in \mathbb{U}^N \text{ and } \nexists \alpha\in \mathbb{U}^N: (\alpha, \beta)\in \mathbb{R}_k^N\}$, the necessary support relation $\mathbf{R_n}^N=\mathbb{R}_n^N\cup \{(\top, \beta)\mid \beta\in \mathbb{U}^N \text{ and } \nexists \alpha\in \mathbb{U}^N: (\alpha, \beta)\in \mathbb{R}_n^N\}$, the universal set for the NHAF is $\mathbf{U}^N=\mathbf{A}^N\cup \mathbf{R_k}^N\cup \mathbf{R_n}^N$.
\end{defn}
In any NHAF system or any propositional logic system, for any assignment $\|\cdot\|: \mathbf{U}^N\to [0,1]$, we always let the evaluation of constants satisfy that $\|\bot\|=0$, $\|\top\|=1$, $\|x\|=1$ for each $x\in\mathbf{R_k}^N\setminus\mathbb{R}_k^N$, and $\|y\|=1$ for each $y\in\mathbf{R_n}^N\setminus\mathbb{R}_n^N$.

Denote the set of all NHAF as $\mathcal{NH}$. Let the set of all elements in all NHAF are the subset of the set of all propositional variables and constants in any $\mathcal{PL}$. A well studied encoded formula for NHAF is the normal encoded formula as follows.
\begin{defn}[\cite{Tang2025}]
	The normal encoding of NHAF w.r.t a $\mathcal{PL}$ is a function $ec_{NH}: \mathcal{NH} \to F_{\mathcal{PL}}$ such that for a given $NHAF=(\mathbf{A}^N, \mathbf{R_k}^N, \mathbf{R_n}^N, \mathbf{U}^N)$ we have that
	\begin{equation*}
		ec_{NH}(NHAF)=\bigwedge_{a\in \mathbb{U}^N}(a\leftrightarrow(\bigwedge_{(b, a)\in\mathbf{R_k}^N}\neg(r_{b}^{a}\wedge b))\wedge(\bigwedge_{(c, a)\in \mathbf{R_n}^N}\neg(t_{c}^{a}\wedge \neg c))),
	\end{equation*}
	where $r_b^a$ denote any attack $(b,a)\in\mathbf{R_k}^N$ and $t_c^a$ denote any necessary support $(c,a)\in\mathbf{R_n}^N$.  
\end{defn}

\subsection{The syntax and encoded semantics of EHAF}
\begin{defn}[\cite{Tang2026}]\label{pEHAF}
	A preparatory evidential-based higher-order argumentation framework ($pre\text{-}EHAF$) is a 5-tuple $(\mathbb{A}^E, \mathbb{R}_k^E, \mathbb{R}_e^E, \mathbb{U}^E, \mathbb{P}^E)$, where $\mathbb{A}^E$ is a set of arguments, $\mathbb{R}_k^E$ is the attack relation, $\mathbb{R}_e^E$ is the evidential support relation, $\mathbb{R}_k^E\cup \mathbb{R}_e^E \subseteq (\mathbb{A}^E \cup \mathbb{R}_k^E \cup \mathbb{R}_e^E) \times (\mathbb{A}^E \cup \mathbb{R}_k^E\cup \mathbb{R}_e^E)$, the finite universal set of the pre-EHAF is $\mathbb{U}^E=\mathbb{A}^E \cup \mathbb{R}_k^E\cup \mathbb{R}_e^E$ and $\mathbb{P}^E\subseteq \mathbb{U}^E$ is called the set of prima-facie elements.
\end{defn}
\begin{defn}[\cite{Tang2026}]\label{EHAF}
	Given a $pre\text{-}EHAF = (\mathbb{A}^E, \mathbb{R}_k^E, \mathbb{R}_e^E, \mathbb{U}^E, \mathbb{P}^E)$, a \emph{evidential-based higher-order argumentation framework} (EHAF)  is a 5-tuple $(\mathbf{A}^E, \mathbf{R_k}^E,$ $\mathbf{R_e}^E, \mathbf{U}^E, \mathbf{P}^E)$, where the argument set $\mathbf{A}^E=\mathbb{A}^E\cup\{\bot\}\cup\{\top\}$, the attack relation $\mathbf{R_k}^E=\mathbb{R}_k^E\cup \{(\bot, \beta)\mid \beta\in \mathbb{U}^E \text{ and } \nexists \alpha\in \mathbb{U}^E: (\alpha, \beta)\in \mathbb{R}_k^E\}$, the evidential support relation $\mathbf{R_e}^E=\mathbb{R}_e^E\cup \{(\bot, \beta)\mid \beta\in \mathbb{U}^E\setminus \mathbb{P}^E \text{ and } \nexists \alpha\in \mathbb{U}^E: (\alpha, \beta)\in \mathbb{R}_e^E\}\cup \{(\top, \beta)\mid \beta\in \mathbb{P}^E \}$, the universal set $\mathbf{U}^E=\mathbf{A}^E\cup \mathbf{R_k}^E\cup \mathbf{R_e}^E$ and the prima-facie set $\mathbf{P}^E=\mathbb{P}^E\cup\{\top\}\cup(\mathbf{R_k}^E\setminus \mathbb{R}_k^E)\cup(\mathbf{R_e}^E\setminus \mathbb{R}_e^E)$.
\end{defn}

In any EHAF system or any propositional logic system, for any assignment $\|\cdot\|: \mathbf{U}^E\to [0,1]$, we always let the evaluation of constants satisfy that $\|\bot\|=0$, $\|\top\|=1$, $\|x\|=1$ for each $x\in\mathbf{R_k}^E\setminus\mathbb{R}_k^E$, and $\|y\|=1$ for each $y\in\mathbf{R_e}^E\setminus\mathbb{R}_e^E$.

Denote the set of all EHAF as $\mathcal{EH}$.  Let the set of all elements in all EHAF are the subset of the set of all propositional variables and constants in any $\mathcal{PL}$. A well studied encoded formula for EHAF is the normal encoded formula as follows.
\begin{defn}[\cite{Tang2026}]
	The normal encoding of EHAF w.r.t a $\mathcal{PL}$ is a function $ec_{EH}: \mathcal{EH} \to F_{\mathcal{PL}}$ such that for a given $EHAF=(\mathbf{A}^E, \mathbf{R_k}^E, \mathbf{R_e}^E, \mathbf{U}^E, \mathbf{P}^E)$ we have that
	\begin{equation*}
		ec_{EH}(EHAF)=\bigwedge_{a\in \mathbb{U}^E}(a\leftrightarrow(\bigwedge_{(b, a)\in\mathbf{R_k}^E}\neg(u_{b}^{a}\wedge b))\wedge(\bigvee_{(c, a)\in \mathbf{R_e}^E}(v_{c}^{a}\wedge  c))),
	\end{equation*}
	where $u_b^a$ denote any attack $(b,a)\in\mathbf{R_k}^E$ and $v_c^a$ denote any evidential support $(c,a)\in\mathbf{R_e}^E$.  
\end{defn}

\section{The Syntax and Basic Semantic Concepts of HQAF}\label{sec:hqaf-syntax}

\subsection{Syntax of HQAF}
	\begin{defn}\label{pHQAF}
	A preparatory higher-order quadripolar argumentation framework ($pre\text{-}HQAF$) is a 7-tuple $(\mathbb{A}, \mathbb{R}_k, \mathbb{R}_n, \mathbb{R}_d, \mathbb{R}_e, \mathbb{U}, \mathbb{P})$, where $\mathbb{A}$ is a set of arguments, $\mathbb{R}_k$ is named the attack relation, $\mathbb{R}_n$ is named the necessary support relation, $\mathbb{R}_d$ is named the deductive support relation, $\mathbb{R}_e$ is named the evidential support relation, $\mathbb{R}_k\cup \mathbb{R}_n\cup \mathbb{R}_d\cup \mathbb{R}_e \subseteq (\mathbb{A} \cup \mathbb{R}_k \cup\mathbb{R}_n\cup \mathbb{R}_d\cup \mathbb{R}_e) \times (\mathbb{A} \cup \mathbb{R}_k\cup \mathbb{R}_n\cup \mathbb{R}_d\cup\mathbb{R}_e)$, the finite universal set $\mathbb{U}=\mathbb{A} \cup \mathbb{R}_k\cup\mathbb{R}_n\cup \mathbb{R}_d\cup \mathbb{R}_e$ and $\mathbb{P}\subseteq \mathbb{U}$ called the set of prima-facie elements.
\end{defn}

\begin{defn}\label{HQAF}
	Given a $pre\text{-}HQAF = (\mathbb{A}, \mathbb{R}_k, \mathbb{R}_n, \mathbb{R}_d, \mathbb{R}_e, \mathbb{U}, \mathbb{P})$, a \emph{higher-order quadripolar argumentation framework} (HQAF)  is a 7-tuple $HQAF = (\mathbf{A}, \mathbf{R_k}, \mathbf{R_n}, \mathbf{R_d}, \mathbf{R_e}, \mathbf{U}, \mathbf{P})$, where the argument set $\mathbf{A}=\mathbb{A}\cup\{\bot\}\cup\{\top\}$, the attack relation $\mathbf{R_k}=\mathbb{R}_k\cup \{(\bot, \beta)\mid \beta\in \mathbb{U} \text{ and } \nexists \alpha\in \mathbb{U}: (\alpha, \beta)\in \mathbb{R}_k\}$, the necessary support relation $\mathbf{R_n}=\mathbb{R}_n\cup \{(\top, \beta)\mid \beta\in \mathbb{U} \text{ and } \nexists \alpha\in \mathbb{U}: (\alpha, \beta)\in \mathbb{R}_n\}$, the deductive support relation $\mathbf{R_d}=\mathbb{R}_d\cup \{(\beta, \top)\mid \beta\in \mathbb{U} \text{ and } \nexists \alpha\in \mathbb{U}: (\beta, \alpha)\in \mathbb{R}_d\}$, the evidential support relation $\mathbf{R_e}=\mathbb{R}_e\cup \{(\{\bot\}, \beta)\mid \beta\in \mathbb{U}\setminus \mathbb{P} \text{ and } \nexists \alpha\in \mathbb{U}: (\alpha, \beta)\in \mathbb{R}_e\}\cup \{(\{\top\}, \beta)\mid \beta\in \mathbb{P} \}$, the universal set $\mathbf{U}=\mathbf{A}\cup \mathbf{R_k}\cup \mathbf{R_n}\cup \mathbf{R_d}\cup \mathbf{R_e}$ and the prima-facie set $\mathbf{P}=\mathbb{P}\cup\{\top\}\cup(\mathbf{R_k}\setminus \mathbb{R}_k)\cup(\mathbf{R_n}\setminus \mathbb{R}_n)\cup(\mathbf{R_d}\setminus \mathbb{R}_d)\cup(\mathbf{R_e}\setminus \mathbb{R}_e)$.
\end{defn}
We call $\bot$, $\top$, or any $\alpha \in (\mathbf{R}_k \cup \mathbf{R}_n \cup \mathbf{R}_d \cup \mathbf{R}_e) \setminus (\mathbb{R}_k \cup \mathbb{R}_n \cup \mathbb{R}_d \cup \mathbb{R}_e)$ an auxiliary element.
To distinguish attacks and supports in the follows, we denote an attack $(\alpha, \beta)\in\mathbf{R_k}$ as $\mathbf{k}_\alpha^\beta$, a necessary support $(\gamma, \delta)\in\mathbf{R_n}$ as $\mathbf{n}_\gamma^\delta$, a deductive support $(\epsilon, \zeta)\in\mathbf{R_d}$ as $\mathbf{d}_\epsilon^\zeta$, and an evidential support $(\eta, \theta)\in\mathbf{R_e}$ as $\mathbf{e}_\eta^\theta$. Here, $\alpha$, $\gamma$, $\epsilon$, and $\eta$ are the \emph{sources} of the attack and respective supports, while $\beta$, $\delta$ $\zeta$, and $\theta$ denote their corresponding \emph{targets}.

\subsection{Labelling Semantics and Equational Semantics for HQAF}\label{labequ}
\begin{defn}\label{def:labelling_semantics_hqaf}
	Let $\mathcal{F}=(\mathbf{A}, \mathbf{R_k}, \mathbf{R_n}, \mathbf{R_d}, \mathbf{R_e}, \mathbf{U}, \mathbf{P})$ be an HQAF and let $D\subseteq [0,1]$ be a truth‑value domain.
	A \emph{labelling} of $\mathcal{F}$ over $D$ is a total function
	\[
	\|\cdot\|: \mathbf{U}\to D
	\]
	which assigns to every element $u\in \mathbf{U}$ a truth value $\|u\|\in D$ and always requires for auxiliary elements that $\|\bot\|=0$, $\|\top\|=1$ and $\|\alpha\|=1$ for each $\alpha \in (\mathbf{R}_k \cup \mathbf{R}_n \cup \mathbf{R}_d \cup \mathbf{R}_e) \setminus (\mathbb{R}_k \cup \mathbb{R}_n \cup \mathbb{R}_d \cup \mathbb{R}_e)$. 
\end{defn}

\begin{defn}
	Let $\mathcal{HQ}$ be the set of all HQAF and $\mathcal{LAB}$ be the set of all labellings of all HQAF over $[0,1]$. A labelling semantics for $\mathcal{HQ}$ is a function
	\[
	\mathfrak{LS}: \mathcal{HQ} \to 2^{\mathcal{LAB}}
	\]
	that maps each $\mathcal{F} \in \mathcal{HQ}$ to a set of labellings of the HQAF, denoted by $\mathfrak{LS}(\mathcal{F})$. Each labelling $\|\cdot\| \in \mathfrak{LS}(\mathcal{F})$ is called a model of the HQAF $\mathcal{F}$ under the semantics $\mathfrak{LS}$. The relationship that $\|\cdot\| \in \mathfrak{LS}(\mathcal{F})$ is also denoted by $\|\cdot\| \vDash_\mathfrak{LS}\mathcal{F}$.
\end{defn}
\begin{defn}[Equational System for HQAF]\label{def:equational_system_hqaf}
	Let $\mathcal{F} = (\mathbf{A}, \mathbf{R}_k, \mathbf{R}_n, \mathbf{R}_d, \mathbf{R}_e,$ $\mathbf{U}, \mathbf{P})$ be an HQAF. An equational system $eq$ over $\mathcal{F}$ is defined by the labelling requirements $\|\bot\| = 0$, $\|\top\| = 1$ and $\|\alpha\| = 1$ for each $\alpha \in (\mathbf{R}_k \cup \mathbf{R}_n \cup \mathbf{R}_d \cup \mathbf{R}_e) \setminus (\mathbb{R}_k \cup \mathbb{R}_n \cup \mathbb{R}_d \cup \mathbb{R}_e)$, and for each element $\beta \in \mathbb{U}$ an associated equation of the form
	\[
	\|\beta\| = h_\beta\left( \|x_1\|, \|x_2\|, \dots, \|x_{m_\beta}\| \right), \quad \|\beta\| \in D,
	\]
	where \(\{x_1,x_2,\dots,x_{m_\beta}\}\subseteq \mathbf{U}\) contains exactly the interaction names and source/target elements occurring in the interactions that directly involve \(\beta\): for every attack \(k^\beta_\gamma\in\mathbf{R}_{\mathbf{k}}\) targeting \(\beta\), both the attack name \(k^\beta_\gamma\) and its source \(\gamma\); for every necessary support \(n^\beta_\delta\in\mathbf{R}_{\mathbf{n}}\) targeting \(\beta\), both the support name \(n^\beta_\delta\) and its source \(\delta\); for every deductive support \(d^\theta_\beta\in\mathbf{R}_{\mathbf{d}}\) originating from \(\beta\), both the support name \(d^\theta_\beta\) and its target \(\theta\); and for every evidential support \(e^\beta_\epsilon\in\mathbf{R}_{\mathbf{e}}\) targeting \(\beta\), both the support name \(e^\beta_\epsilon\) and its source \(\epsilon\); and \(h_\beta:D^{m_\beta}\to D\) is a function.
\end{defn}
\begin{defn}[Equational Semantics for HQAF]\label{def:equational_semantics_hqaf}
	An equational semantics for HQAF is a labelling semantics
	\[
	\mathfrak{LS}_{Eq}: \mathcal{HQ} \to 2^{\mathcal{LAB}}
	\]
	such that for every HQAF $\mathcal{F} = (\mathbf{A}, \mathbf{R}_k, \mathbf{R}_n, \mathbf{R}_d, \mathbf{R}_e, \mathbf{U}, \mathbf{P})$ and its associated equational system $eq$,
	\[
	\mathfrak{LS}_{Eq}(\mathcal{F}) = \left\{ \|\cdot\| \mid \|\cdot\| \vDash_{eq} \mathcal{F} \right\}.
	\]
\end{defn}
\begin{defn}[Adjacent Complete Labelling for HQAF]\label{adjcom}
	For an $HQAF=(\mathbf{A},\mathbf{R}_{\mathbf{k}},\mathbf{R}_{\mathbf{n}},\mathbf{R}_{\mathbf{d}},\mathbf{R}_{\mathbf{e}},\mathbf{U},\mathbf{P})$, an adjacent complete labelling is a function $\|\cdot\|:\mathbf{U}\to\{0,\tfrac{1}{2},1\}$ satisfying the auxiliary-element requirements in Definition \ref{def:labelling_semantics_hqaf} and for each $\beta\in\mathbb{U}$:
	\[
	\begin{array}{rl}
		\|\beta\|=1 & \text{ if } 
		\bigl(\forall (\gamma,\beta)\in\mathbf{R}_{\mathbf{k}}:\|\gamma\|=0 \text{ or } \|\mathbf{k}_\gamma^\beta\|=0\bigr) \text{ and } \\
		& \quad \bigl(\forall (\delta,\beta)\in\mathbf{R}_{\mathbf{n}}:\|\delta\|=1 \text{ or } \|\mathbf{n}_\delta^\beta\|=0\bigr) \text{ and } \\
		& \quad \bigl(\forall (\beta,\theta)\in\mathbf{R}_{\mathbf{d}}:\|\theta\|=1 \text{ or } \|\mathbf{d}_\beta^\theta\|=0\bigr) \text{ and } \\
		& \quad \bigl(\exists (\varepsilon,\beta)\in\mathbf{R}_{\mathbf{e}}:\|\varepsilon\|=1 \text{ and } \|\mathbf{e}_\varepsilon^\beta\|=1\bigr); \\[6pt]
		\|\beta\|=0 & \text{ if } 
		\bigl(\exists (\gamma,\beta)\in\mathbf{R}_{\mathbf{k}}:\|\gamma\|=1 \text{ and } \|\mathbf{k}_\gamma^\beta\|=1\bigr) \text{ or } \\
		& \quad \bigl(\exists (\delta,\beta)\in\mathbf{R}_{\mathbf{n}}:\|\delta\|=0 \text{ and } \|\mathbf{n}_\delta^\beta\|=1\bigr) \text{ or } \\
		& \quad \bigl(\exists (\beta,\theta)\in\mathbf{R}_{\mathbf{d}}:\|\theta\|=0 \text{ and } \|\mathbf{d}_\beta^\theta\|=1\bigr) \text{ or } \\
		& \quad \bigl(\forall (\varepsilon,\beta)\in\mathbf{R}_{\mathbf{e}}:\|\varepsilon\|=0 \text{ or } \|\mathbf{e}_\varepsilon^\beta\|=0\bigr); \\[6pt]
		\|\beta\|=\tfrac{1}{2} & \text{ otherwise.}
	\end{array}
	\]
	The adjacent complete labelling semantics, denoted by $\mathfrak{LS}_{ac}$, is defined such that for any HQAF $\mathcal{F}$, $\mathfrak{LS}_{ac}(\mathcal{F})$ is the set of all adjacent complete labellings of $\mathcal{F}$:
	\[
	\mathfrak{LS}_{ac}(\mathcal{F}) = \{\,\|\cdot\| \mid \|\cdot\| \text{ is an adjacent complete labelling of } \mathcal{F}\,\}.
	\]
\end{defn}
We explain the intuition of the adjacent complete labelling. The labelling function $\|\cdot\|\colon \mathbf{U}\to\{0,\frac{1}{2},1\}$ assigns every element (arguments and all higher-order interactions) one of three statuses: $1$ for \textit{valid}, $0$ for \textit{invalid}, and $\frac{1}{2}$ for \textit{undecided}. 

An element $\beta\in\mathbb{U}$ is valid if four conditions are satisfied simultaneously:
for each attack interaction targeting $\beta$, the attack arrow is invalid or the attacker is invalid, which makes the whole attack interaction ineffective;
for each necessary support interaction targeting $\beta$, the support arrow is invalid which makes the interaction ineffective, or the supporter is valid which preserves the validity of $\beta$;
for each deductive support interaction originating from $\beta$, the support arrow is invalid which makes the interaction ineffective, or the target is valid which preserves the validity of $\beta$;
for evidential support interactions targeting $\beta$, at least one valid evidential support interaction exists, i.e., a valid support arrow together with its valid source element.

Correspondingly, $\beta$ is invalid if any destructive condition holds:
there exists an effective attack against $\beta$ with both the attack arrow and the attacker valid;
there exists an unsatisfied necessary support for $\beta$ with a valid arrow but an invalid supporter;
there exists an unsatisfied deductive support from $\beta$ with a valid arrow but an invalid target;
all evidential supports for $\beta$ are ineffective, with no valid arrow-source pair available.

$\beta$ takes the undecided value $\frac{1}{2}$ if neither all validity conditions nor any invalidity condition is fully met.
The term ``adjacent'' reflects that the status of $\beta$ is determined solely by its directly adjacent interactions.

\begin{defn}[Adjacent Stable, Preferred and Grounded Semantics for HQAF]\label{def:adjacent_spg_hqaf}
	Let $\mathcal{F} = (\mathbf{A}, \mathbf{R}_k, \mathbf{R}_n, \mathbf{R}_d, \mathbf{R}_e, \mathbf{U}, \mathbf{P})$ be an HQAF satisfying the labelling requirements in Definition \ref{def:labelling_semantics_hqaf}.
	\begin{enumerate}
		\item A labelling $\|\cdot\|: \mathbf{U} \to \{0, 1\}$ is an adjacent stable labelling of $\mathcal{F}$ if it is an adjacent complete labelling of $\mathcal{F}$ and no element of $\mathbf{U}$ is assigned the value $\frac{1}{2}$. We denote by $\mathfrak{LS}_{as}(\mathcal{F})$ the set of all adjacent stable labellings of $\mathcal{F}$.
		\item An adjacent complete labelling $\|\cdot\|$ of $\mathcal{F}$ is an adjacent preferred labelling if it is maximal with respect to the inclusion order of accepted elements: there does not exist another adjacent complete labelling $\|\cdot\|'$ such that
		\[
		\{x \in \mathbf{U} \mid \|x\| = 1\} \subsetneq \{x \in \mathbf{U} \mid \|x\|' = 1\}.
		\]
		We denote by $\mathfrak{LS}_{ap}(\mathcal{F})$ the set of all adjacent preferred labellings of $\mathcal{F}$.
		\item An adjacent grounded labelling of $\mathcal{F}$ is an adjacent complete labelling $\|\cdot\|$ that is minimal with respect to the inclusion order of accepted elements: there does not exist another adjacent complete labelling $\|\cdot\|'$ such that
		\[
		\{x \in \mathbf{U} \mid \|x\|' = 1\} \subsetneq \{x \in \mathbf{U} \mid \|x\| = 1\}.
		\]
		We denote by $\mathfrak{LS}_{ag}(\mathcal{F})$ the set containing the adjacent grounded labelling.
	\end{enumerate}
\end{defn}
\begin{defn}[Three-Valued Equational Semantics for HQAF]\label{def:3val_equational_hqaf}
	Let $\mathcal{F} = (\mathbf{A}, \mathbf{R}_k, \mathbf{R}_n, \mathbf{R}_d, \mathbf{R}_e, \mathbf{U}, \mathbf{P})$ be an HQAF. For each element $\beta \in \mathbb{U}$, define the following four semantic quantities:
	\begin{align*}
		K(\beta) &\stackrel{\text{def}}{=} \min_{\mathbf{k}_{\gamma}^{\beta} \in \mathbf{R}_k} \max\left\{ 1-\left\| \mathbf{k}_{\gamma}^{\beta} \right\|,\, 1-\left\| \gamma \right\| \right\}, \\
		N(\beta) &\stackrel{\text{def}}{=} \min_{\mathbf{n}_{\delta}^{\beta} \in \mathbf{R}_n} \max\left\{ 1-\left\| \mathbf{n}_{\delta}^{\beta} \right\|,\, \left\| \delta \right\| \right\}, \\
		D(\beta) &\stackrel{\text{def}}{=} \min_{\mathbf{d}_{\beta}^{\theta} \in \mathbf{R}_d} \max\left\{ 1-\left\| \mathbf{d}_{\beta}^{\theta} \right\|,\, \left\| \theta \right\| \right\}, \\
		E(\beta) &\stackrel{\text{def}}{=} \max_{\mathbf{e}_{\varepsilon}^{\beta} \in \mathbf{R}_e} \min\left\{ \left\| \mathbf{e}_{\varepsilon}^{\beta} \right\|,\, \left\| \varepsilon \right\| \right\},
	\end{align*}
	where all $\min$ and $\max$ operations are evaluated over the three-valued domain $\left\{0, \frac{1}{2}, 1\right\}$.
	
	The three-valued equational system $eq_3^{HQ}$ for $\mathcal{F}$ is specified by the auxiliary-element conditions
	\[
	\|\bot\| = 0,\quad \|\top\| = 1,\quad \|\alpha\| = 1 \quad \big(\forall \alpha \in (\mathbf{R}_k \cup \mathbf{R}_n \cup \mathbf{R}_d \cup \mathbf{R}_e) \setminus (\mathbb{R}_k \cup \mathbb{R}_n \cup \mathbb{R}_d \cup \mathbb{R}_e)\big)
	\]
	together with the fixed-point equation for every $\beta \in \mathbb{U}$:
	\[
	\|\beta\| = \min\left\{ K(\beta),\, N(\beta),\, D(\beta),\, E(\beta) \right\}.
	\]
	
	An assignment $\|\cdot\|: \mathbf{U} \to \left\{0, \frac{1}{2}, 1\right\}$ satisfying all equations in $eq_3^{HQ}$ is called a solution of $eq_3^{HQ}$, denoted $\|\cdot\| \vDash_{eq_3^{HQ}} \mathcal{F}$.
	
	The three-valued equational semantics for HQAF induced by $eq_3^{HQ}$ is the labelling semantics
	\[
	\mathfrak{LS}_{Eq_3^{HQ}}: \mathcal{HQ} \to 2^{\mathcal{LAB}},
	\]
	defined by
	\[
	\mathfrak{LS}_{Eq_3^{HQ}}(\mathcal{F}) = \left\{ \|\cdot\| \mid \|\cdot\| \vDash_{eq_3^{HQ}} \mathcal{F} \right\}.
	\]
\end{defn}
The equivalence between three-valued equational semantics and adjacent complete semantics will be proved in the next section.

\section{Encoded Semantics for HQAF}\label{sec:encoding}
\subsection{General Encoded Semantics}

We adopt the following notational convention throughout this section. For any propositional logic $\mathcal{PL}$ with propositional constant set $\mathit{Con}$ and propositional variable set $\mathit{Var}$, and for every HQAF $\mathcal{F} = (\mathbf{A}, \mathbf{R}_k, \mathbf{R}_n, \mathbf{R}_d, \mathbf{R}_e,$ $\mathbf{U}, \mathbf{P})$, we assume the universal set $\mathbf{U}$ is embedded in the signature of $\mathcal{PL}$:
\[
\mathbf{U} \subseteq \mathit{Con} \cup \mathit{Var}, \text{with} \{\bot, \top\} \cup \big( (\mathbf{R}_k \cup \mathbf{R}_n \cup \mathbf{R}_d \cup \mathbf{R}_e) \setminus (\mathbb{R}_k \cup \mathbb{R}_n \cup \mathbb{R}_d \cup \mathbb{R}_e) \big) \subseteq \mathit{Con}.
\]
Auxiliary elements receive fixed interpretations: $\|\bot\| = 0$, $\|\top\| = 1$, and $\|\alpha\| = 1$ for every auxiliary interaction $\alpha \in (\mathbf{R}_k \cup \mathbf{R}_n \cup \mathbf{R}_d \cup \mathbf{R}_e) \setminus (\mathbb{R}_k \cup \mathbb{R}_n \cup \mathbb{R}_d \cup \mathbb{R}_e)$. This convention identifies HQAF auxiliary elements with logical constants, providing a uniform semantic foundation for all encoding schemes below.

\begin{defn}[Encoding for HQAF]\label{def:encoding_hqaf}
	An encoding of HQAF w.r.t. a propositional logic $\mathcal{PL}$ is a function
	\[
	ec: \mathcal{HQ} \to F_{\mathcal{PL}}
	\]
	such that for every HQAF $\mathcal{F} = (\mathbf{A}, \mathbf{R}_k, \mathbf{R}_n, \mathbf{R}_d, \mathbf{R}_e, \mathbf{U}, \mathbf{P})$, the set of propositional variables and constants appearing in $ec(\mathcal{F})$ coincides exactly with the universal set $\mathbf{U}$. The formula $ec(\mathcal{F})$ is called the \emph{encoded formula} of the HQAF $\mathcal{F}$.
\end{defn}

\begin{defn}[Encoded Semantics]\label{def:encoded_semantics_hqaf}
	Given a propositional logic system $\mathcal{PL}$ and an encoding function $ec: \mathcal{HQ} \to F_{\mathcal{PL}}$, the \emph{encoded semantics} induced by $ec$ and $\mathcal{PL}$ is a labelling semantics $\mathfrak{LS}_{ec}^{\mathcal{PL}}$ defined by
	\[
	\mathfrak{LS}_{ec}^{\mathcal{PL}}(\mathcal{F}) = \left\{ \|\cdot\| \mid \left\| ec(\mathcal{F}) \right\| = 1 \right\}.
	\]
	An assignment $\|\cdot\| \in \mathfrak{LS}_{ec}^{\mathcal{PL}}(\mathcal{F})$ is called a \emph{model} of the HQAF under this encoded semantics, denoted $\|\cdot\| \vDash_{\mathfrak{LS}_{ec}^{\mathcal{PL}}} \mathcal{F}$, or equivalently $\|\cdot\| \vDash_{\mathcal{PL}} ec(\mathcal{F})$.
\end{defn}

\begin{defn}[Skeptical and Credulous Encoded Semantics for HQAF]\label{def:skeptical_credulous_hqaf}
	Let $\mathfrak{LS}_{ec}^{\mathcal{PL}}$ be an encoded semantics associated with a propositional logic $\mathcal{PL}$ with truth value set $L \subseteq [0,1]$ and an encoding function $ec$.
	
	The \emph{skeptical encoded semantics} induced by $ec$ and $\mathcal{PL}$ is the labelling semantics $\mathfrak{LS}_{ec}^{\mathcal{PL}, sk}$ such that for any HQAF $\mathcal{F}$,
	\[
	\mathfrak{LS}_{ec}^{\mathcal{PL}, sk}(\mathcal{F}) = \left\{ \|\cdot\|_{\mathcal{F}}^{sk}: \mathbf{U} \to L \mid \|x\|_{\mathcal{F}}^{sk} = \min\left\{ \|x\| \mid \|\cdot\| \in \mathfrak{LS}_{ec}^{\mathcal{PL}}(\mathcal{F}) \right\},\ \forall x \in \mathbf{U} \right\}.
	\]
	
	Analogously, the \emph{credulous encoded semantics} induced by $ec$ and $\mathcal{PL}$ is the labelling semantics $\mathfrak{LS}_{ec}^{\mathcal{PL}, cr}$ such that for any HQAF $\mathcal{F}$,
	\[
	\mathfrak{LS}_{ec}^{\mathcal{PL}, cr}(\mathcal{F}) = \left\{ \|\cdot\|_{\mathcal{F}}^{cr}: \mathbf{U} \to L \mid \|x\|_{\mathcal{F}}^{cr} = \max\left\{ \|x\| \mid \|\cdot\| \in \mathfrak{LS}_{ec}^{\mathcal{PL}}(\mathcal{F}) \right\},\ \forall x \in \mathbf{U} \right\}.
	\]
\end{defn}
Combining the normal encoded formulas of NHAF and EHAF and adopting the view that an element A deductively supports an element B iff B necessarily supports A, we obtain the normal encoded formula for HQAF.
\begin{defn}[Normal Encoding of HQAF]\label{def:normal_encoding_hqaf}
	For a given propositional logic $\mathcal{PL}$, the \emph{normal encoding} of HQAF w.r.t. $\mathcal{PL}$ is the encoding function $ec_n$ such that for every HQAF $\mathcal{F} = (\mathbf{A}, \mathbf{R}_k, \mathbf{R}_n, \mathbf{R}_d, \mathbf{R}_e, \mathbf{U}, \mathbf{P})$,
	\begin{align*}
		ec_n(\mathcal{F}) &= \bigwedge_{\beta \in \mathbb{U}} \Bigl( \beta \leftrightarrow
		\left( \bigwedge_{\mathbf{k}_\gamma^\beta \in \mathbf{R}_k} \neg \left( \mathbf{k}_\gamma^\beta \land \gamma \right) \right) \land \left( \bigwedge_{\mathbf{n}_\delta^\beta \in \mathbf{R}_n} \neg \left( \mathbf{n}_\delta^\beta \land \neg \delta \right) \right) \land {} \\
		&\qquad \left( \bigwedge_{\mathbf{d}_\beta^\theta \in \mathbf{R}_d} \neg \left( \mathbf{d}_\beta^\theta \land \neg \theta \right) \right) \land \left( \bigvee_{\mathbf{e}_\varepsilon^\beta \in \mathbf{R}_e} \left( \mathbf{e}_\varepsilon^\beta \land \varepsilon \right) \right) \Bigr).
	\end{align*}
	The \emph{normal encoded semantics} w.r.t. $\mathcal{PL}$ is the encoded semantics induced by $ec_n$ and $\mathcal{PL}$, denoted by $\mathfrak{LS}_{ec_n}^{\mathcal{PL}}$.
\end{defn}

\subsection{Discrete Normal Encoded Semantics}

\begin{thm}\label{thm:3val_encoded_equivalence_hqaf}
	For any HQAF $\mathcal{F} = (\mathbf{A}, \mathbf{R}_k, \mathbf{R}_n, \mathbf{R}_d, \mathbf{R}_e, \mathbf{U}, \mathbf{P})$, 
	\[
	\mathfrak{LS}_{ec_n}^{\mathcal{PL}_3^L}(\mathcal{F}) = \mathfrak{LS}_{ac}(\mathcal{F}).
	\]
\end{thm}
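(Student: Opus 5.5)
The plan is to pass through the three-valued equational system $eq_3^{HQ}$ of Definition~\ref{def:3val_equational_hqaf}. We will show two equalities:
\[
\mathfrak{LS}_{ec_n}^{\mathcal{PL}_3^L}(\mathcal{F})=\mathfrak{LS}_{Eq_3^{HQ}}(\mathcal{F})
\qquad\text{and}\qquad
\mathfrak{LS}_{Eq_3^{HQ}}(\mathcal{F})=\mathfrak{LS}_{ac}(\mathcal{F}).
\]
Both are pointwise statements about a single element $\beta\in\mathbb{U}$. In every case the auxiliary elements are fixed by the same conventions (Definition~\ref{def:labelling_semantics_hqaf} and the embedding into $\mathit{Con}$), so only the equation for each $\beta\in\mathbb{U}$ needs checking.

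\textbf{Encoding versus equations.} In $\mathcal{PL}_3^L$, $\wedge$ is $\min$, $\vee$ is $\max$, and $\neg$ is $1-x$. Hence a conjunction over $\beta\in\mathbb{U}$ has value $1$ iff every conjunct has value $1$. By Table~\ref{Leftrightarrow}, $\|a\leftrightarrow b\|=1$ iff $\|a\|=\|b\|$. So $\|ec_n(\mathcal{F})\|=1$ iff, for every $\beta$, $\|\beta\|$ equals the value of the right-hand side. We then evaluate the right-hand side term by term using De Morgan for $\min/\max$ with $1-x$:
\begin{itemize}
\item $\|\neg(\mathbf{k}_\gamma^\beta\wedge\gamma)\|=\max\{1-\|\mathbf{k}_\gamma^\beta\|,1-\|\gamma\|\}$, so the attack block gives $K(\beta)$.
\item $\|\neg(\mathbf{n}_\delta^\beta\wedge\neg\delta)\|=\max\{1-\|\mathbf{n}_\delta^\beta\|,\|\delta\|\}$, so the necessary-support block gives $N(\beta)$.
\item The deductive block gives $D(\beta)$ in the same way.
\item The disjunction gives $E(\beta)$.
\end{itemize}
The outer conjunction is then $\min\{K,N,D,E\}$. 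Each index set is nonempty thanks to the auxiliary completions in Definition~\ref{HQAF}, namely $(\bot,\beta)$, $(\top,\beta)$, $(\beta,\top)$, and $(\{\bot\},\beta)$ or $(\{\top\},\beta)$, so empty conjunctions and disjunctions never arise. This step is routine.

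\textbf{Equations versus adjacent complete labellings.} This is the main step. For $x,y\in\{0,\tfrac12,1\}$ we use the facts that $\min\{x,y\}=1$ iff both equal $1$, and $\max\{x,y\}=0$ iff both equal $0$. The four clauses of Definition~\ref{adjcom} then translate as follows:
\begin{itemize}
\item The validity condition holds iff $K(\beta)=N(\beta)=D(\beta)=E(\beta)=1$, i.e.\ iff $\min\{K,N,D,E\}=1$. For example, $\max\{1-\|\mathbf{k}\|,1-\|\gamma\|\}=1$ iff $\|\mathbf{k}\|=0$ or $\|\gamma\|=0$; likewise $E(\beta)=1$ iff some pair has both values equal to $1$.
\item The invalidity condition holds iff at least one of $K,N,D,E$ equals $0$, i.e.\ iff the minimum is $0$. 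For instance, $K(\beta)=0$ iff some attack has $\|\mathbf{k}\|=\|\gamma\|=1$, and $E(\beta)=0$ iff every pair has a $0$ component.
\item Otherwise, since all values lie in $\{0,\tfrac12,1\}$, the minimum is $\tfrac12$.
\end{itemize}
The two conditions are mutually exclusive, so the case split in Definition~\ref{adjcom} is well defined and exactly reproduces $\|\beta\|=\min\{K,N,D,E\}$.

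The main care point is the \emph{otherwise} case together with the exclusivity of the first two clauses. Three-valuedness is what makes ``not $1$ and not $0$'' collapse to $\tfrac12$, and we must verify that no labelling can satisfy both the validity and the invalidity conditions; this follows because the minimum cannot be both $1$ and $0$. Combining the two equalities then yields the theorem.
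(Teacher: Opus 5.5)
Your proof is correct, but it is organized differently from the paper's.

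\textbf{How the paper proceeds.} The paper proves this theorem directly. For each $\beta\in\mathbb{U}$ it splits on $\|\beta\|\in\{1,0,\tfrac12\}$. It translates the validity clause of Definition~\ref{adjcom} into $\|\Phi_\beta\|=1$ and the invalidity clause into $\|\Phi_\beta\|=0$, working with formula truth values in $\mathcal{PL}_3^L$. The $\tfrac12$ case then follows by elimination. Only afterwards does it prove $\mathfrak{LS}_{Eq_3^{HQ}}=\mathfrak{LS}_{ec_n}^{\mathcal{PL}_3^L}$ as Theorem~\ref{thm:3eq_encoded_equivalence_hqaf}. The equality $\mathfrak{LS}_{Eq_3^{HQ}}=\mathfrak{LS}_{ac}$ appears only as Corollary~\ref{thm:3val_equivalence_hqaf}.

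\textbf{How you proceed.} You reverse this dependency. Your first step is exactly the paper's Theorem~\ref{thm:3eq_encoded_equivalence_hqaf}. Your second step proves the corollary's equality directly, and the theorem follows by composition. The per-element trichotomy argument is the same in substance, so neither route is stronger.

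\textbf{What your route buys.} The connective semantics of $\mathcal{PL}_3^L$ is used exactly once, to get $\|\Phi_\beta\|=\min\{K,N,D,E\}$. Everything else is min/max arithmetic on $\{0,\tfrac12,1\}$. This also lets you state two points the paper leaves implicit:
\begin{itemize}
\item The validity and invalidity clauses of Definition~\ref{adjcom} are mutually exclusive. Hence ``adjacent complete'' means $\|\beta\|$ equals a well-defined function of its neighbours.
\item Every index set is nonempty, thanks to the auxiliary completions.
\end{itemize}

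\textbf{What the paper's route buys.} It avoids the numeric quantities $K,N,D,E$. The cost is that its Case~1 and Case~2 chains end with steps such as $\|\Phi_\beta\|=1\Longleftrightarrow\|\beta\leftrightarrow\Phi_\beta\|=1$. Such steps are valid only under the case hypothesis on $\|\beta\|$, a subtlety your formulation avoids.
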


\begin{proof}
	Any assignment on auxiliary elements in $\mathbf{U}$ is trivially consistent between adjacent complete labellings and models of $ec_n(\mathcal{F})$ in $\mathcal{PL}_3^L$. We need to check that for a given assignment $\|\cdot\|$ and for each $\beta \in \mathbb{U}$, the value of $\beta$ satisfies the adjacent complete labelling conditions (Definition~\ref{adjcom}) iff
	\[
	\left\| \beta \leftrightarrow \left(
	\begin{aligned}
		&\bigwedge_{\mathbf{k}_{\gamma}^{\beta} \in \mathbf{R}_k} \neg \left( \mathbf{k}_{\gamma}^{\beta} \land \gamma \right) \land
		\bigwedge_{\mathbf{n}_{\delta}^{\beta} \in \mathbf{R}_n} \neg \left( \mathbf{n}_{\delta}^{\beta} \land \neg \delta \right) \\
		&\land \bigwedge_{\mathbf{d}_{\beta}^{\theta} \in \mathbf{R}_d} \neg \left( \mathbf{d}_{\beta}^{\theta} \land \neg \theta \right) \land
		\bigvee_{\mathbf{e}_{\varepsilon}^{\beta} \in \mathbf{R}_e} \left( \mathbf{e}_{\varepsilon}^{\beta} \land \varepsilon \right)
	\end{aligned}
	\right) \right\| = 1
	\]
	in $\mathcal{PL}_3^L$. Denote the right-hand side of the equivalence inside the formula by $\Phi_\beta$. We discuss three cases.
	
	\begin{itemize}
		\item \textbf{Case 1, $\|\beta\| = 1$.}
		\\
		$\|\beta\| = 1$ by adjacent complete labelling
		\\
		$\Longleftrightarrow$ $\left[\forall \mathbf{k}_{\gamma}^{\beta} \in \mathbf{R}_k: \|\mathbf{k}_{\gamma}^{\beta}\| = 0 \text{ or } \|\gamma\| = 0\right]$ and $\left[\forall \mathbf{n}_{\delta}^{\beta} \in \mathbf{R}_n: \|\mathbf{n}_{\delta}^{\beta}\| = 0 \text{ or } \|\delta\| = 1\right]$ and $\left[\forall \mathbf{d}_{\beta}^{\theta} \in \mathbf{R}_d: \|\mathbf{d}_{\beta}^{\theta}\| = 0 \text{ or } \|\theta\| = 1\right]$ and $\left[\exists \mathbf{e}_{\varepsilon}^{\beta} \in \mathbf{R}_e: \|\mathbf{e}_{\varepsilon}^{\beta}\| = 1 \text{ and } \|\varepsilon\| = 1\right]$ by Definition~\ref{adjcom}
		\\
		$\Longleftrightarrow$ $\left[\forall \mathbf{k}_{\gamma}^{\beta} \in \mathbf{R}_k: \left\| \mathbf{k}_{\gamma}^{\beta} \land \gamma \right\| = 0\right]$ and $\left[\forall \mathbf{n}_{\delta}^{\beta} \in \mathbf{R}_n: \left\| \mathbf{n}_{\delta}^{\beta} \land \neg \delta \right\| = 0\right]$ and $\left[\forall \mathbf{d}_{\beta}^{\theta} \in \mathbf{R}_d: \left\| \mathbf{d}_{\beta}^{\theta} \land \neg \theta \right\| = 0\right]$ and $\left[\exists \mathbf{e}_{\varepsilon}^{\beta} \in \mathbf{R}_e: \left\| \mathbf{e}_{\varepsilon}^{\beta} \land \varepsilon \right\| = 1\right]$ in $\mathcal{PL}_3^L$
		\\
		$\Longleftrightarrow$ $\left[\forall \mathbf{k}_{\gamma}^{\beta} \in \mathbf{R}_k: \left\| \neg \left( \mathbf{k}_{\gamma}^{\beta} \land \gamma \right) \right\| = 1\right]$ and $\left[\forall \mathbf{n}_{\delta}^{\beta} \in \mathbf{R}_n: \left\| \neg \left( \mathbf{n}_{\delta}^{\beta} \land \neg \delta \right) \right\| = 1\right]$ and $\left[\forall \mathbf{d}_{\beta}^{\theta} \in \mathbf{R}_d: \left\| \neg \left( \mathbf{d}_{\beta}^{\theta} \land \neg \theta \right) \right\| = 1\right]$ and $\left\| \bigvee_{\mathbf{e}_{\varepsilon}^{\beta} \in \mathbf{R}_e} \left( \mathbf{e}_{\varepsilon}^{\beta} \land \varepsilon \right) \right\| = 1$ in $\mathcal{PL}_3^L$
		\\
		$\Longleftrightarrow$ $\left\| \bigwedge_{\mathbf{k}_{\gamma}^{\beta} \in \mathbf{R}_k} \neg \left( \mathbf{k}_{\gamma}^{\beta} \land \gamma \right) \right\| = 1$ and $\left\| \bigwedge_{\mathbf{n}_{\delta}^{\beta} \in \mathbf{R}_n} \neg \left( \mathbf{n}_{\delta}^{\beta} \land \neg \delta \right) \right\| = 1$ and $\left\| \bigwedge_{\mathbf{d}_{\beta}^{\theta} \in \mathbf{R}_d} \neg \left( \mathbf{d}_{\beta}^{\theta} \land \neg \theta \right) \right\| = 1$ and $\left\| \bigvee_{\mathbf{e}_{\varepsilon}^{\beta} \in \mathbf{R}_e} \left( \mathbf{e}_{\varepsilon}^{\beta} \land \varepsilon \right) \right\| = 1$ in $\mathcal{PL}_3^L$
		\\
		$\Longleftrightarrow$ $\|\Phi_\beta\| = 1$ in $\mathcal{PL}_3^L$
		\\
		$\Longleftrightarrow$ $\|\beta \leftrightarrow \Phi_\beta\| = 1$ in $\mathcal{PL}_3^L$.
		
		\item \textbf{Case 2, $\|\beta\| = 0$.}
		\\
		$\|\beta\| = 0$ by adjacent complete labelling
		\\
		$\Longleftrightarrow$ $\left[\exists \mathbf{k}_{\gamma}^{\beta} \in \mathbf{R}_k: \|\mathbf{k}_{\gamma}^{\beta}\| = 1 \text{ and } \|\gamma\| = 1\right]$ or $\left[\exists \mathbf{n}_{\delta}^{\beta} \in \mathbf{R}_n: \|\mathbf{n}_{\delta}^{\beta}\| = 1 \text{ and } \|\delta\| = 0\right]$ or $\left[\exists \mathbf{d}_{\beta}^{\theta} \in \mathbf{R}_d: \|\mathbf{d}_{\beta}^{\theta}\| = 1 \text{ and } \|\theta\| = 0\right]$ or $\left[\forall \mathbf{e}_{\varepsilon}^{\beta} \in \mathbf{R}_e: \|\mathbf{e}_{\varepsilon}^{\beta}\| = 0 \text{ or } \|\varepsilon\| = 0\right]$ by Definition~\ref{adjcom}
		\\
		$\Longleftrightarrow$ $\left[\exists \mathbf{k}_{\gamma}^{\beta} \in \mathbf{R}_k: \left\| \mathbf{k}_{\gamma}^{\beta} \land \gamma \right\| = 1\right]$ or $\left[\exists \mathbf{n}_{\delta}^{\beta} \in \mathbf{R}_n: \left\| \mathbf{n}_{\delta}^{\beta} \land \neg \delta \right\| = 1\right]$ \\or $\left[\exists \mathbf{d}_{\beta}^{\theta} \in \mathbf{R}_d: \left\| \mathbf{d}_{\beta}^{\theta} \land \neg \theta \right\| = 1\right]$ or $\left[\forall \mathbf{e}_{\varepsilon}^{\beta} \in \mathbf{R}_e: \left\| \mathbf{e}_{\varepsilon}^{\beta} \land \varepsilon \right\| = 0\right]$ in $\mathcal{PL}_3^L$
		\\
		$\Longleftrightarrow$ $\left[\exists \mathbf{k}_{\gamma}^{\beta} \in \mathbf{R}_k: \left\| \neg \left( \mathbf{k}_{\gamma}^{\beta} \land \gamma \right) \right\| = 0\right]$ or $\left[\exists \mathbf{n}_{\delta}^{\beta} \in \mathbf{R}_n: \left\| \neg \left( \mathbf{n}_{\delta}^{\beta} \land \neg \delta \right) \right\| = 0\right]$ or $\left[\exists \mathbf{d}_{\beta}^{\theta} \in \mathbf{R}_d: \left\| \neg \left( \mathbf{d}_{\beta}^{\theta} \land \neg \theta \right) \right\| = 0\right]$ or $\left\| \bigvee_{\mathbf{e}_{\varepsilon}^{\beta} \in \mathbf{R}_e} \left( \mathbf{e}_{\varepsilon}^{\beta} \land \varepsilon \right) \right\| = 0$ in $\mathcal{PL}_3^L$
		\\
		$\Longleftrightarrow$ $\left\| \bigwedge_{\mathbf{k}_{\gamma}^{\beta} \in \mathbf{R}_k} \neg \left( \mathbf{k}_{\gamma}^{\beta} \land \gamma \right) \right\| = 0$ or $\left\| \bigwedge_{\mathbf{n}_{\delta}^{\beta} \in \mathbf{R}_n} \neg \left( \mathbf{n}_{\delta}^{\beta} \land \neg \delta \right) \right\| = 0$ \\or $\left\| \bigwedge_{\mathbf{d}_{\beta}^{\theta} \in \mathbf{R}_d} \neg \left( \mathbf{d}_{\beta}^{\theta} \land \neg \theta \right) \right\| = 0$ or $\left\| \bigvee_{\mathbf{e}_{\varepsilon}^{\beta} \in \mathbf{R}_e} \left( \mathbf{e}_{\varepsilon}^{\beta} \land \varepsilon \right) \right\| = 0$ in $\mathcal{PL}_3^L$
		\\
		$\Longleftrightarrow$ $\|\Phi_\beta\| = 0$ in $\mathcal{PL}_3^L$
		\\
		$\Longleftrightarrow$ $\|\beta \leftrightarrow \Phi_\beta\| = 1$ in $\mathcal{PL}_3^L$.
		
		\item \textbf{Case 3, $\|\beta\| = \frac{1}{2}$.}
		\\
		$\|\beta\| = \frac{1}{2}$ by adjacent complete labelling
		\\
		$\Longleftrightarrow$ $\|\beta\| \neq 1$ and $\|\beta\| \neq 0$ under adjacent complete labelling
		\\
		$\Longleftrightarrow$ $\|\Phi_\beta\| \neq 1$ and $\|\Phi_\beta\| \neq 0$ in $\mathcal{PL}_3^L$ by Case 1 and Case 2
		\\
		$\Longleftrightarrow$ $\|\Phi_\beta\| = \frac{1}{2}$ in $\mathcal{PL}_3^L$
		\\
		$\Longleftrightarrow$ $\|\beta \leftrightarrow \Phi_\beta\| = 1$ in $\mathcal{PL}_3^L$.
	\end{itemize}
	
	From the three cases above, for a given assignment $\|\cdot\|$, the value $\|\beta\|$ of any $\beta \in \mathbb{U}$ satisfies the adjacent complete labelling conditions iff
	\[
	\left\| \beta \leftrightarrow \left(
	\begin{aligned}
		&\bigwedge_{\mathbf{k}_{\gamma}^{\beta} \in \mathbf{R}_k} \neg \left( \mathbf{k}_{\gamma}^{\beta} \land \gamma \right) \land
		\bigwedge_{\mathbf{n}_{\delta}^{\beta} \in \mathbf{R}_n} \neg \left( \mathbf{n}_{\delta}^{\beta} \land \neg \delta \right) \\
		&\land \bigwedge_{\mathbf{d}_{\beta}^{\theta} \in \mathbf{R}_d} \neg \left( \mathbf{d}_{\beta}^{\theta} \land \neg \theta \right) \land
		\bigvee_{\mathbf{e}_{\varepsilon}^{\beta} \in \mathbf{R}_e} \left( \mathbf{e}_{\varepsilon}^{\beta} \land \varepsilon \right)
	\end{aligned}
	\right) \right\| = 1
	\]
	in $\mathcal{PL}_3^L$.
	
	Thus, an assignment $\|\cdot\|$ of the HQAF is an adjacent complete labelling iff the assignment $\|\cdot\|$ is a model of $ec_n(\mathcal{F})$ in $\mathcal{PL}_3^L$.
\end{proof}
\begin{thm}\label{thm:3eq_encoded_equivalence_hqaf}
	For every HQAF $\mathcal{F}$,
	\[
	\mathfrak{LS}_{Eq_3^{HQ}}(\mathcal{F}) = \mathfrak{LS}_{ec_n}^{\mathcal{PL}_3^L}(\mathcal{F}).
	\]
\end{thm}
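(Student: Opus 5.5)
The plan is to compare the two semantics directly, formula by formula. In $\mathcal{PL}_3^L$, every subformula of $ec_n(\mathcal{F})$ is built from $\neg$, $\wedge$, $\vee$, which are interpreted as $1-x$, $\min$, $\max$ on $\{0,\tfrac12,1\}$. The only non-lattice connective is $\leftrightarrow$. Its truth table (Table~\ref{Leftrightarrow}) shows that $\|a\leftrightarrow b\|=1$ iff $\|a\|=\|b\|$.

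Since the outer conjunction is $\min$, $\|ec_n(\mathcal{F})\|=1$ holds iff $\|\beta\leftrightarrow\Phi_\beta\|=1$ for every $\beta\in\mathbb{U}$, where $\Phi_\beta$ is the right-hand side as in the proof of Theorem~\ref{thm:3val_encoded_equivalence_hqaf}. This in turn holds iff $\|\beta\|=\|\Phi_\beta\|$ for every $\beta\in\mathbb{U}$. The auxiliary-element conditions are identical on both sides: in $eq_3^{HQ}$ they are imposed explicitly, and in the encoded semantics they are fixed by the convention identifying auxiliary elements with constants. So it remains to show that $\|\Phi_\beta\|=\min\{K(\beta),N(\beta),D(\beta),E(\beta)\}$ for every assignment.

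I would evaluate $\Phi_\beta$ conjunct by conjunct.
\begin{itemize}
\item $\|\neg(\mathbf{k}_\gamma^\beta\wedge\gamma)\|=1-\min\{\|\mathbf{k}_\gamma^\beta\|,\|\gamma\|\}=\max\{1-\|\mathbf{k}_\gamma^\beta\|,1-\|\gamma\|\}$. Taking the big conjunction as $\min$ over the attacks on $\beta$ gives $K(\beta)$.
\item Similarly, $\|\neg(\mathbf{n}_\delta^\beta\wedge\neg\delta)\|=\max\{1-\|\mathbf{n}_\delta^\beta\|,\|\delta\|\}$ yields $N(\beta)$.
\item The deductive conjuncts $\neg(\mathbf{d}_\beta^\theta\wedge\neg\theta)$ yield $D(\beta)$ in the same way.
\item The disjunction of $\mathbf{e}_\varepsilon^\beta\wedge\varepsilon$ evaluates to $\max\min\{\|\mathbf{e}_\varepsilon^\beta\|,\|\varepsilon\|\}=E(\beta)$.
\end{itemize}
The outer $\wedge$ then gives the $\min$ of the four quantities. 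All values stay inside $\{0,\tfrac12,1\}$, since this set is closed under $1-x$, $\min$, $\max$, so the three-valued evaluation of the equations matches.

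The only point needing care is that all index sets are nonempty, so that the big $\min$/$\max$ and big $\wedge$/$\vee$ are well defined and agree. Definition~\ref{HQAF} guarantees this:
\begin{itemize}
\item each $\beta\in\mathbb{U}$ has at least one attacker (possibly $\bot$ via an auxiliary attack);
\item each $\beta\in\mathbb{U}$ has at least one necessary supporter (possibly $\top$);
\item each $\beta\in\mathbb{U}$ has at least one outgoing deductive support (possibly to $\top$);
\item each $\beta\in\mathbb{U}$ has at least one evidential support (from $\bot$ or $\top$, depending on membership in $\mathbb{P}$).
\end{itemize}
These auxiliary interactions have value $1$ under both semantics, so they contribute consistently: $\max\{0,1-0\}=1$ for the dummy attack, and similarly for the others.

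With these steps, the solutions of $eq_3^{HQ}$ coincide exactly with the models of $ec_n(\mathcal{F})$ in $\mathcal{PL}_3^L$. A by-product is that, combined with Theorem~\ref{thm:3val_encoded_equivalence_hqaf}, both coincide with $\mathfrak{LS}_{ac}(\mathcal{F})$. The main obstacle is merely bookkeeping: checking the characterization of $\leftrightarrow$ from the table and the nonemptiness of the index sets. Neither is deep.
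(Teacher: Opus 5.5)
Your proposal is correct and follows the paper's proof essentially step for step. Both reduce $\|ec_n(\mathcal{F})\|=1$ to $\|\beta\|=\|\Phi_\beta\|$ for each $\beta\in\mathbb{U}$ via the $\leftrightarrow$ table, evaluate the four blocks of $\Phi_\beta$ as $K(\beta)$, $N(\beta)$, $D(\beta)$, $E(\beta)$, and combine them with $\min$. Your nonemptiness check via Definition~\ref{HQAF} is extra bookkeeping that the paper leaves implicit. One small correction there: the dummy evidential support from $\bot$ contributes $0$, not $1$, to $E(\beta)$. This does not matter for the argument, since all that is needed is that both sides evaluate it identically.
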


\begin{proof}
	Fix an HQAF $\mathcal{F}=(\mathbf{A},\mathbf{R}_k,\mathbf{R}_n,\mathbf{R}_d,\mathbf{R}_e,\mathbf{U},\mathbf{P})$ and an assignment $\|\cdot\|:\mathbf{U}\to\{0,\frac{1}{2},1\}$. We prove that $\|\cdot\|$ satisfies the three-valued equational system $eq_3^{HQ}$ if and only if $\|\cdot\|$ is a model of the normal encoding $ec_n(\mathcal{F})$ in $\mathcal{PL}_3^L$, i.e., $\|ec_n(\mathcal{F})\|_{\mathcal{PL}_3^L}=1$. Since the auxiliary-element conditions are identical in both systems, it suffices to establish the equivalence for every $\beta\in\mathbb{U}$ between the fixed-point equation of $eq_3^{HQ}$ and the corresponding equivalence in $ec_n(\mathcal{F})$.
	
	Recall that in $\mathcal{PL}_3^L$ the connectives are interpreted as
	\[
	\|\neg a\|=1-\|a\|,\qquad
	\|a\wedge b\|=\min\{\|a\|,\|b\|\},\qquad
	\|a\vee b\|=\max\{\|a\|,\|b\|\},
	\]
	and the equivalence connective satisfies
	\[
	\|a\leftrightarrow b\|=1 \quad\Longleftrightarrow\quad \|a\|=\|b\|.
	\]
	The normal encoding is
	\[
	ec_n(\mathcal{F})=\bigwedge_{\beta\in\mathbb{U}}\bigl(\beta\leftrightarrow\Phi_\beta\bigr),
	\]
	where
	\begin{align*}
		\Phi_\beta=&
		\left(\bigwedge_{\mathbf{k}_{\gamma}^{\beta}\in\mathbf{R}_k}\neg(\mathbf{k}_{\gamma}^{\beta}\wedge\gamma)\right)
		\wedge
		\left(\bigwedge_{\mathbf{n}_{\delta}^{\beta}\in\mathbf{R}_n}\neg(\mathbf{n}_{\delta}^{\beta}\wedge\neg\delta)\right)
		\wedge\\
		&\left(\bigwedge_{\mathbf{d}_{\beta}^{\theta}\in\mathbf{R}_d}\neg(\mathbf{d}_{\beta}^{\theta}\wedge\neg\theta)\right)
		\wedge
		\left(\bigvee_{\mathbf{e}_{\varepsilon}^{\beta}\in\mathbf{R}_e}(\mathbf{e}_{\varepsilon}^{\beta}\wedge\varepsilon)\right).
	\end{align*}
	Thus $\|ec_n(\mathcal{F})\|=1$ if and only if for every $\beta\in\mathbb{U}$, $\|\beta\leftrightarrow\Phi_\beta\|=1$, which is equivalent to $\|\beta\|=\|\Phi_\beta\|$.
	
	Now fix $\beta\in\mathbb{U}$. We compute the truth value of $\Phi_\beta$ in $\mathcal{PL}_3^L$ part by part. For the attack part, using $\|\neg a\|=1-\|a\|$ and $\|a\wedge b\|=\min\{\|a\|,\|b\|\}$, we obtain
	\[
	\begin{aligned}
		\left\|\bigwedge_{\mathbf{k}_{\gamma}^{\beta}\in\mathbf{R}_k}\neg(\mathbf{k}_{\gamma}^{\beta}\wedge\gamma)\right\|
		&= \min_{\mathbf{k}_{\gamma}^{\beta}\in\mathbf{R}_k} \bigl(1-\min\{\|\mathbf{k}_{\gamma}^{\beta}\|,\|\gamma\|\}\bigr) \\
		&= \min_{\mathbf{k}_{\gamma}^{\beta}\in\mathbf{R}_k} \max\{1-\|\mathbf{k}_{\gamma}^{\beta}\|,1-\|\gamma\|\} \\
		&= K(\beta).
	\end{aligned}
	\]
	For the necessary support part, using $\|\neg\delta\|=1-\|\delta\|$, we have
	\[
	\begin{aligned}
		\left\|\bigwedge_{\mathbf{n}_{\delta}^{\beta}\in\mathbf{R}_n}\neg(\mathbf{n}_{\delta}^{\beta}\wedge\neg\delta)\right\|
		&= \min_{\mathbf{n}_{\delta}^{\beta}\in\mathbf{R}_n} \bigl(1-\min\{\|\mathbf{n}_{\delta}^{\beta}\|,1-\|\delta\|\}\bigr) \\
		&= \min_{\mathbf{n}_{\delta}^{\beta}\in\mathbf{R}_n} \max\{1-\|\mathbf{n}_{\delta}^{\beta}\|,\|\delta\|\} \\
		&= N(\beta).
	\end{aligned}
	\]
	Similarly, the deductive support part evaluates to
	\[
	\begin{aligned}
		\left\|\bigwedge_{\mathbf{d}_{\beta}^{\theta}\in\mathbf{R}_d}\neg(\mathbf{d}_{\beta}^{\theta}\wedge\neg\theta)\right\|
		&= \min_{\mathbf{d}_{\beta}^{\theta}\in\mathbf{R}_d} \bigl(1-\min\{\|\mathbf{d}_{\beta}^{\theta}\|,1-\|\theta\|\}\bigr) \\
		&= \min_{\mathbf{d}_{\beta}^{\theta}\in\mathbf{R}_d} \max\{1-\|\mathbf{d}_{\beta}^{\theta}\|,\|\theta\|\} \\
		&= D(\beta).
	\end{aligned}
	\]
	Finally, the evidential support part, using $\|a\vee b\|=\max\{\|a\|,\|b\|\}$ and $\|a\wedge b\|=\min\{\|a\|,\|b\|\}$, gives
	\[
	\left\|\bigvee_{\mathbf{e}_{\varepsilon}^{\beta}\in\mathbf{R}_e}(\mathbf{e}_{\varepsilon}^{\beta}\wedge\varepsilon)\right\|
	= \max_{\mathbf{e}_{\varepsilon}^{\beta}\in\mathbf{R}_e} \min\{\|\mathbf{e}_{\varepsilon}^{\beta}\|,\|\varepsilon\|\}
	= E(\beta).
	\]
	Since $\Phi_\beta$ is the conjunction of these four parts and conjunction is interpreted as the minimum, we conclude
	\[
	\|\Phi_\beta\|
	= \min\{K(\beta),N(\beta),D(\beta),E(\beta)\}.
	\]
	Therefore, for every $\beta\in\mathbb{U}$,
	\[
	\|\beta\|=\min\{K(\beta),N(\beta),D(\beta),E(\beta)\}
	\quad\Longleftrightarrow\quad
	\|\beta\|=\|\Phi_\beta\|.
	\]
	By the property of the equivalence connective in $\mathcal{PL}_3^L$, the latter is equivalent to
	\[
	\|\beta\leftrightarrow\Phi_\beta\|=1.
	\]
	Hence the fixed-point equation of $eq_3^{HQ}$ for $\beta$ holds if and only if the corresponding conjunct $\beta\leftrightarrow\Phi_\beta$ in $ec_n(\mathcal{F})$ has truth value $1$.
	
	Since $ec_n(\mathcal{F})$ is the conjunction of all these conjuncts, its truth value is $1$ if and only if every conjunct has truth value $1$. Consequently, the assignment $\|\cdot\|$ satisfies all equations of $eq_3^{HQ}$ if and only if $\|ec_n(\mathcal{F})\|_{\mathcal{PL}_3^L}=1$. Therefore,
	\[
	\|\cdot\|\vDash_{eq_3^{HQ}}\mathcal{F}
	\quad\Longleftrightarrow\quad
	\|\cdot\|\vDash_{\mathfrak{LS}_{ec_n}^{\mathcal{PL}_3^L}}\mathcal{F},
	\]
	and the two labelling semantics coincide:
	\[
	\mathfrak{LS}_{Eq_3^{HQ}}(\mathcal{F}) = \mathfrak{LS}_{ec_n}^{\mathcal{PL}_3^L}(\mathcal{F}).
	\]
\end{proof}
\begin{cor}\label{thm:3val_equivalence_hqaf}
	For every HQAF $\mathcal{F}$,
	\[
	\mathfrak{LS}_{Eq_3^{HQ}}(\mathcal{F})  = \mathfrak{LS}_{ec_n}^{\mathcal{PL}_3^L}(\mathcal{F})= \mathfrak{LS}_{ac}(\mathcal{F}).
	\]
\end{cor}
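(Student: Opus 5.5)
The corollary follows by chaining the two preceding theorems; no new computation is needed. Fix an arbitrary HQAF $\mathcal{F}$. Theorem~\ref{thm:3eq_encoded_equivalence_hqaf} gives $\mathfrak{LS}_{Eq_3^{HQ}}(\mathcal{F}) = \mathfrak{LS}_{ec_n}^{\mathcal{PL}_3^L}(\mathcal{F})$. Theorem~\ref{thm:3val_encoded_equivalence_hqaf} gives $\mathfrak{LS}_{ec_n}^{\mathcal{PL}_3^L}(\mathcal{F}) = \mathfrak{LS}_{ac}(\mathcal{F})$. Transitivity of set equality then yields the full chain of equalities.

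Before chaining, I will check that all three semantics are sets of labellings of the same kind. Each consists of assignments $\|\cdot\|:\mathbf{U}\to\{0,\tfrac12,1\}$ satisfying the same auxiliary-element requirements: $\|\bot\|=0$, $\|\top\|=1$, and value $1$ on every auxiliary interaction.
\begin{itemize}
\item For $\mathfrak{LS}_{ac}$ this is explicit in Definition~\ref{adjcom}.
\item For $\mathfrak{LS}_{Eq_3^{HQ}}$ it is explicit in Definition~\ref{def:3val_equational_hqaf}.
\item For the encoded semantics in $\mathcal{PL}_3^L$, the truth-value domain is $\{0,\tfrac12,1\}$, and the auxiliary constants have the fixed interpretation adopted at the start of Section~\ref{sec:encoding}.
\end{itemize}
So the two theorems compare like with like, and their equalities can be composed directly.

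The only point needing care is this domain bookkeeping. Conceptually there is no obstacle, since both theorems were proved pointwise for each $\beta\in\mathbb{U}$ over the same class of assignments.

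As an optional sanity check, one could also verify $\mathfrak{LS}_{Eq_3^{HQ}}(\mathcal{F})=\mathfrak{LS}_{ac}(\mathcal{F})$ directly. Note that $\min\{K,N,D,E\}(\beta)=1$ exactly when every attack pair has a $0$ component, every necessary or deductive pair has its arrow $0$ or its source or target $1$, and some evidential pair is $(1,1)$; dually for the value $0$. This gives the same conclusion without passing through the logic, but it is not needed.
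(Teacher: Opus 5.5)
Your proposal is correct and is exactly the paper's proof: the corollary is obtained by composing Theorem~\ref{thm:3eq_encoded_equivalence_hqaf} and Theorem~\ref{thm:3val_encoded_equivalence_hqaf} via transitivity of set equality. Your domain bookkeeping and the optional direct check of $\mathfrak{LS}_{Eq_3^{HQ}}(\mathcal{F})=\mathfrak{LS}_{ac}(\mathcal{F})$ are both sound, but neither is needed for the argument.
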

\begin{proof}
	It follows immediately from Theorem~\ref{thm:3val_encoded_equivalence_hqaf} and Theorem \ref{thm:3eq_encoded_equivalence_hqaf}.
\end{proof}

\begin{thm}
	For every HQAF $\mathcal{F}$,
	\[
	\mathfrak{LS}_{as}(\mathcal{F}) = \mathfrak{LS}_{ec_n}^{\mathcal{PL}_2}(\mathcal{F}).
	\]
\end{thm}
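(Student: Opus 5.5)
We prove the equality by reducing it to Theorem~\ref{thm:3val_encoded_equivalence_hqaf}. Here $\mathcal{PL}_2$ is classical two-valued logic over $\{0,1\}$, with $\neg$, $\wedge$, $\vee$ interpreted as $1-x$, $\min$, $\max$. Its implication is the residuum restricted to $\{0,1\}$, so $\|a\leftrightarrow b\|=1$ iff $\|a\|=\|b\|$. The observation that does the work is that $\mathcal{PL}_2$ is exactly the restriction of $\mathcal{PL}_3^L$ to the truth values $\{0,1\}$. Inspecting Tables~\ref{Rightarrow} and~\ref{Leftrightarrow} on the corner entries shows that $\{0,1\}$ is closed under all connectives of $\mathcal{PL}_3^L$, and that these restrictions are the classical truth functions. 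Hence, for every assignment $\|\cdot\|:\mathbf{U}\to\{0,1\}$ and every formula $\varphi$, the $\mathcal{PL}_2$-value of $\varphi$ equals its $\mathcal{PL}_3^L$-value. This is a straightforward induction on $\varphi$.

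With this lemma in hand, we prove the two inclusions in turn.

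\emph{First inclusion, $\mathfrak{LS}_{as}(\mathcal{F})\subseteq \mathfrak{LS}_{ec_n}^{\mathcal{PL}_2}(\mathcal{F})$.} Take $\|\cdot\|\in\mathfrak{LS}_{as}(\mathcal{F})$. By Definition~\ref{def:adjacent_spg_hqaf}, it is an adjacent complete labelling with range in $\{0,1\}$. By Theorem~\ref{thm:3val_encoded_equivalence_hqaf}, $\|ec_n(\mathcal{F})\|=1$ in $\mathcal{PL}_3^L$. By the lemma, the same value $1$ is obtained in $\mathcal{PL}_2$.

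\emph{Second inclusion, $\mathfrak{LS}_{ec_n}^{\mathcal{PL}_2}(\mathcal{F})\subseteq \mathfrak{LS}_{as}(\mathcal{F})$.} Take a $\mathcal{PL}_2$-model $\|\cdot\|:\mathbf{U}\to\{0,1\}$ of $ec_n(\mathcal{F})$. It is also an assignment into $\{0,\frac12,1\}$. By the lemma it is a $\mathcal{PL}_3^L$-model, so Theorem~\ref{thm:3val_encoded_equivalence_hqaf} makes it an adjacent complete labelling. Since it takes no value $\frac12$, it is adjacent stable.

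Throughout, the auxiliary-element constraints are identical on both sides: they are fixed constants $0$ or $1$, which lie in $\{0,1\}$. So they cause no difficulty.

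The only step needing care is the lemma that $\{0,1\}$ is a subalgebra of the Łukasiewicz three-valued algebra agreeing with classical logic. In particular, we must check $1\Rightarrow 0=0$, $0\Rightarrow x=1$ and $1\Rightarrow 1=1$, so that $\leftrightarrow$ restricted to $\{0,1\}$ is classical equivalence. The second subtle point is the convention, implicit in Definition~\ref{def:adjacent_spg_hqaf}, that stable labellings are exactly the $\{0,1\}$-valued complete ones, including on auxiliary elements, and this is immediate. With these checks, the theorem follows.
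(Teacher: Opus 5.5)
Your proof is correct and takes the same route as the paper. Both arguments reduce to Theorem~\ref{thm:3val_encoded_equivalence_hqaf}, using two facts: the $\mathcal{PL}_3^L$ connectives restricted to $\{0,1\}$ are the classical ones, so the $\mathcal{PL}_2$-models of $ec_n(\mathcal{F})$ are exactly its $\{0,1\}$-valued $\mathcal{PL}_3^L$-models; and the adjacent stable labellings are exactly the $\{0,1\}$-valued adjacent complete ones. You also make explicit the subalgebra check and the two inclusions, which the paper compresses into a one-line restriction argument.
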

\begin{proof}
	Let $\mathcal{F}$ be an arbitrary HQAF. By Theorem~\ref{thm:3val_encoded_equivalence_hqaf}, the normal encoding $ec_n$ exactly captures the adjacent complete semantics in the three-valued logic $\mathcal{PL}_3^L$:
	\[
	\mathfrak{LS}_{ec_n}^{\mathcal{PL}_3^L}(\mathcal{F}) = \mathfrak{LS}_{ac}(\mathcal{F}).
	\]
	The encoding formula $ec_n(\mathcal{F})$ uses only the connectives $\neg, \land, \vee, \leftrightarrow$, whose truth tables on the set $\{0,1\}$ coincide with those of classical two-valued logic $\mathcal{PL}_2$. Therefore, restricting the universe of assignments to $\{0,1\}$, we have
	\[
	\mathfrak{LS}_{ec_n}^{\mathcal{PL}_2}(\mathcal{F}) 
	= \bigl\{ \|\cdot\| \in \mathfrak{LS}_{ec_n}^{\mathcal{PL}_3^L}(\mathcal{F}) \mid \|\cdot\| : \mathbf{U} \to \{0,1\} \bigr\}.
	\]
	By Definition~\ref{def:adjacent_spg_hqaf}, a labelling is adjacent stable exactly when it is adjacent complete and assigns only the values $0$ or $1$ to every element of $\mathbf{U}$. Hence
	\[
	\mathfrak{LS}_{as}(\mathcal{F})
	= \bigl\{ \|\cdot\| \in \mathfrak{LS}_{ac}(\mathcal{F}) \mid \|\cdot\| : \mathbf{U} \to \{0,1\} \bigr\}.
	\]
	Combining the two equalities yields
	\[
	\mathfrak{LS}_{ec_n}^{\mathcal{PL}_2}(\mathcal{F}) = \mathfrak{LS}_{as}(\mathcal{F}).
	\]
	Since $\mathcal{F}$ was arbitrary, the theorem follows. \qedhere
\end{proof}

\subsection{General Fuzzy Encoded Semantics}

\subsubsection{Continuous Fuzzy Operator-Based Equational (CFOE) Semantics}

\begin{defn}[CFOE System for HQAF]\label{def:cfoe_system_hqaf}
	Let $N^*$ be a continuous negation and $\otimes: [0,1]^2 \to [0,1]$ be a continuous t-norm. For an HQAF $\mathcal{F} = (\mathbf{A}, \mathbf{R}_k, \mathbf{R}_n, \mathbf{R}_d, \mathbf{R}_e, \mathbf{U}, \mathbf{P})$, denote by $\mathcal{K}_\beta = \{\mathbf{k}_{\gamma}^{\beta} \in \mathbf{R}_k\}$ the set of attacks targeting $\beta$, $\mathcal{N}_\beta = \{\mathbf{n}_{\delta}^{\beta} \in \mathbf{R}_n\}$ the set of necessary supports targeting $\beta$, $\mathcal{D}_\beta = \{\mathbf{d}_{\beta}^{\theta} \in \mathbf{R}_d\}$ the set of deductive supports originating from $\beta$, and $\mathcal{E}_\beta = \{\mathbf{e}_{\varepsilon}^{\beta} \in \mathbf{R}_e\}$ the set of evidential supports targeting $\beta$. Let $\|\cdot\|: \mathbf{U} \to [0,1]$ be an assignment.
	
	A continuous fuzzy operator-based equational (CFOE) system $eq_{[0,1]}^{*, \otimes}$ for $\mathcal{F}$ is defined by the auxiliary-element conditions
	\[
	\|\bot\| = 0,\quad \|\top\| = 1,\quad \|\alpha\| = 1 \quad \big(\forall \alpha \in (\mathbf{R}_k \cup \mathbf{R}_n \cup \mathbf{R}_d \cup \mathbf{R}_e) \setminus (\mathbb{R}_k \cup \mathbb{R}_n \cup \mathbb{R}_d \cup \mathbb{R}_e)\big)
	\]
	and for every $\beta \in \mathbb{U}$, the fixed-point equation:
	\[
	\|\beta\| = 
	\begin{aligned}[t]
		&\left( \bigotimes_{\mathbf{k}_{\gamma}^{\beta} \in \mathcal{K}_\beta} N^*\left( \left\| \mathbf{k}_{\gamma}^{\beta} \right\| \otimes \left\| \gamma \right\| \right) \right) \\
		&\otimes \left( \bigotimes_{\mathbf{n}_{\delta}^{\beta} \in \mathcal{N}_\beta} N^*\left( \left\| \mathbf{n}_{\delta}^{\beta} \right\| \otimes N^*\left( \left\| \delta \right\| \right) \right) \right) \\
		&\otimes \left( \bigotimes_{\mathbf{d}_{\beta}^{\theta} \in \mathcal{D}_\beta} N^*\left( \left\| \mathbf{d}_{\beta}^{\theta} \right\| \otimes N^*\left( \left\| \theta \right\| \right) \right) \right) \\
		&\otimes \left( N^*\left( \bigotimes_{\mathbf{e}_{\varepsilon}^{\beta} \in \mathcal{E}_\beta} N^*\left( \left\| \mathbf{e}_{\varepsilon}^{\beta} \right\| \otimes \left\| \varepsilon \right\| \right) \right) \right).
	\end{aligned}
	\]
	Here $\bigotimes$ denotes the finite iteration of the t-norm $\otimes$, and the last factor employs the standard t-conorm defined by $x \oplus y = N^*(N^*(x) \otimes N^*(y))$, so that the disjunction over evidential supports is expressed as $N^*\left( \bigotimes_{\mathbf{e} \in \mathcal{E}_\beta} N^*\left( \left\| \mathbf{e}_{\varepsilon}^{\beta} \right\| \otimes \left\| \varepsilon \right\| \right) \right)$.
	
	An assignment $\|\cdot\|: \mathbf{U} \to [0,1]$ satisfying all equations is called a solution of $eq_{[0,1]}^{*, \otimes}$, denoted $\|\cdot\| \vDash_{eq_{[0,1]}^{*, \otimes}} \mathcal{F}$.
	
	The continuous fuzzy operator-based equational (CFOE) semantics induced by $eq_{[0,1]}^{*, \otimes}$ is the function
	\[
	\mathfrak{LS}_{Eq_{[0,1]}^{*, \otimes}}: \mathcal{HQ} \to 2^{\mathcal{LAB}}, \quad
	\mathfrak{LS}_{Eq_{[0,1]}^{*, \otimes}}(\mathcal{F}) = \left\{ \|\cdot\| \mid \|\cdot\| \vDash_{eq_{[0,1]}^{*, \otimes}} \mathcal{F} \right\}.
	\]
\end{defn}

\subsubsection{Continuous Fuzzy Normal Encoded (CFNE) Semantics}

\begin{defn}[CFNE Semantics for HQAF]\label{def:cfne_semantics_hqaf}
	Let $\mathcal{PL}_{[0,1]}^{*, \otimes}$ be a fuzzy propositional logic system equipped with a continuous negation $N^*$, a continuous t-norm $\otimes$, and its residuated implication. The continuous fuzzy normal encoded (CFNE) semantics is the encoded semantics induced by the normal encoding $ec_n$ (Definition~\ref{def:normal_encoding_hqaf}) and $\mathcal{PL}_{[0,1]}^{*, \otimes}$, denoted by $\mathfrak{LS}_{ec_n}^{\mathcal{PL}_{[0,1]}^*}$.
	
	That is, for every HQAF $\mathcal{F}$,
	\[
	\mathfrak{LS}_{ec_n}^{\mathcal{PL}_{[0,1]}^*}(\mathcal{F}) = \left\{ \|\cdot\| : \mathbf{U} \to [0,1] \mid \left\| ec_n(\mathcal{F}) \right\|_{\mathcal{PL}_{[0,1]}^{*, \otimes}} = 1 \right\}.
	\]
\end{defn}

\subsubsection{Equivalence between CFOE and CFNE Semantics}

\begin{thm}[Equivalence of CFOE and CFNE Semantics]\label{thm:cfoe_cfne_equivalence_hqaf}
	For every HQAF $\mathcal{F} \in \mathcal{HQ}$,
	\[
	\mathfrak{LS}_{Eq_{[0,1]}^{*, \otimes}}(\mathcal{F}) = \mathfrak{LS}_{ec_n}^{\mathcal{PL}_{[0,1]}^*}(\mathcal{F}).
	\]
\end{thm}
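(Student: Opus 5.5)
The plan is to mirror the proof of Theorem~\ref{thm:3eq_encoded_equivalence_hqaf}, replacing the three-valued connectives by the operators of $\mathcal{PL}_{[0,1]}^{*,\otimes}$. Fix $\mathcal{F}$ and an assignment $\|\cdot\|:\mathbf{U}\to[0,1]$ obeying the auxiliary-element conditions, which are the same in both systems. Write $ec_n(\mathcal{F})=\bigwedge_{\beta\in\mathbb{U}}(\beta\leftrightarrow\Phi_\beta)$, with $\Phi_\beta$ as in the proof of Theorem~\ref{thm:3eq_encoded_equivalence_hqaf}. In $\mathcal{PL}_{[0,1]}^{*,\otimes}$ the symbols are interpreted as follows: $\neg$ by $N^*$, $\wedge$ by $\otimes$, $\vee$ by the dual t-conorm $x\oplus y=N^*(N^*(x)\otimes N^*(y))$ (the convention fixed in Definition~\ref{def:cfoe_system_hqaf}), and $\rightarrow$ by the residuum $I_\otimes$.

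\textbf{Step 1: the outer conjunction.} For any t-norm, $x\otimes y=1$ iff $x=y=1$, since $x\otimes y\le\min\{x,y\}$ and $1$ is the neutral element. By induction on the finite iterate, $\|ec_n(\mathcal{F})\|=1$ iff $\|\beta\leftrightarrow\Phi_\beta\|=1$ for every $\beta\in\mathbb{U}$.

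\textbf{Step 2: the biconditional.} Here $\|a\leftrightarrow b\|=I_\otimes(\|a\|,\|b\|)\otimes I_\otimes(\|b\|,\|a\|)$. Using Step 1 again together with the residuation property $I_\otimes(m,n)=1\iff m\le n$ (valid for any left-continuous, hence any continuous, t-norm), this equals $1$ iff $\|a\|=\|b\|$. This is the fact recalled in the Preliminaries.

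\textbf{Step 3: evaluate $\Phi_\beta$.} Compute $\|\Phi_\beta\|$ compositionally, block by block. The attack block gives $\bigotimes_{\mathcal{K}_\beta}N^*(\|\mathbf{k}_\gamma^\beta\|\otimes\|\gamma\|)$. The necessary block gives $\bigotimes_{\mathcal{N}_\beta}N^*(\|\mathbf{n}_\delta^\beta\|\otimes N^*(\|\delta\|))$, and the deductive block has the same shape. For the evidential block, unfolding $\oplus$ over the finite disjunction gives $N^*\bigl(\bigotimes_{\mathcal{E}_\beta}N^*(\|\mathbf{e}_\varepsilon^\beta\|\otimes\|\varepsilon\|)\bigr)$. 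This unfolding needs a short induction, and the inner double negations must cancel, $N^*(N^*(x))=x$.

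Joining the four blocks by $\otimes$ reproduces exactly the right-hand side of the CFOE fixed-point equation. By associativity and commutativity of $\otimes$, the bracketing and ordering of the iterated conjunctions are immaterial. Combining Steps 1--3, $\|\cdot\|$ solves $eq_{[0,1]}^{*,\otimes}$ iff $\|\beta\|=\|\Phi_\beta\|$ for all $\beta$, iff $\|ec_n(\mathcal{F})\|=1$.

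\textbf{Main obstacle.} I expect the delicate point to be the evidential disjunction. If $\vee$ were interpreted by $\max$ rather than by the $N^*$-dual of $\otimes$, the two semantics would differ. The cancellation of double negations in the induction also requires $N^*$ to be involutive, which a merely continuous negation need not be. I would first check whether the intended logics (standard negation) make this automatic, and otherwise state the involutivity assumption explicitly.

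Empty index sets need a separate check. An empty $\bigotimes$ equals $1$, matching an empty $\bigwedge$ evaluating to $\|\top\|$. In fact the auxiliary elements of Definition~\ref{HQAF} ensure that $\mathcal{K}_\beta$, $\mathcal{N}_\beta$, $\mathcal{D}_\beta$ and $\mathcal{E}_\beta$ are nonempty, so the case does not even arise.
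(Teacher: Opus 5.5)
Your proposal is correct and follows the same route as the paper's proof. Both reduce $\|ec_n(\mathcal{F})\|=1$ to $\|\beta\|=\|\Phi_\beta\|$ for each $\beta\in\mathbb{U}$ via the residuum property of $\leftrightarrow$, then evaluate $\Phi_\beta$ compositionally with $\neg\mapsto N^*$, $\wedge\mapsto\otimes$ and $\vee\mapsto$ the $N^*$-dual conorm, and read off the CFOE right-hand side. Your caveat about the evidential block is well founded; the paper passes over it with ``iterated accordingly''. The identity $\|\bigvee_{\mathcal{E}_\beta}(\mathbf{e}_\varepsilon^\beta\wedge\varepsilon)\|=N^*\bigl(\bigotimes_{\mathcal{E}_\beta}N^*(\cdots)\bigr)$ needs $N^*$ to be involutive: already with a single evidential support the formula yields $x$ while the CFOE factor is $N^*(N^*(x))$. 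Involutivity holds for the standard negation used in all the paper's instances but not for an arbitrary continuous negation, so stating it as an explicit hypothesis is the right fix.
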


\begin{proof}
	Fix an HQAF $\mathcal{F} = (\mathbf{A}, \mathbf{R}_k, \mathbf{R}_n, \mathbf{R}_d, \mathbf{R}_e, \mathbf{U}, \mathbf{P})$ and an assignment $\|\cdot\|: \mathbf{U} \to [0,1]$. By Definition~\ref{def:cfne_semantics_hqaf}, $\|\cdot\| \in \mathfrak{LS}_{ec_n}^{\mathcal{PL}_{[0,1]}^*}(\mathcal{F})$ if and only if $\left\| ec_n(\mathcal{F}) \right\| = 1$ in $\mathcal{PL}_{[0,1]}^{*, \otimes}$. Since $ec_n(\mathcal{F}) = \bigwedge_{\beta \in \mathbb{U}} \Phi_\beta$, where
	\[
	\Phi_\beta \stackrel{\text{def}}{=} \beta \leftrightarrow \left(
	\begin{aligned}
		&\bigwedge_{\mathbf{k}_{\gamma}^{\beta} \in \mathbf{R}_k} \neg \left( \mathbf{k}_{\gamma}^{\beta} \land \gamma \right) \land \bigwedge_{\mathbf{n}_{\delta}^{\beta} \in \mathbf{R}_n} \neg \left( \mathbf{n}_{\delta}^{\beta} \land \neg \delta \right) \\
		&\land \bigwedge_{\mathbf{d}_{\beta}^{\theta} \in \mathbf{R}_d} \neg \left( \mathbf{d}_{\beta}^{\theta} \land \neg \theta \right) \land \bigvee_{\mathbf{e}_{\varepsilon}^{\beta} \in \mathbf{R}_e} \left( \mathbf{e}_{\varepsilon}^{\beta} \land \varepsilon \right)
	\end{aligned}
	\right),
	\]
	we have $\left\| \bigwedge_{\beta} \Phi_\beta \right\| = 1$ if and only if for every $\beta \in \mathbb{U}$, $\left\| \Phi_\beta \right\| = 1$. In any fuzzy logic with residuated implication, $\|a \leftrightarrow b\| = 1$ if and only if $\|a\| = \|b\|$. Hence for each $\beta$,
	\[
	\left\| \Phi_\beta \right\| = 1 \iff \|\beta\| = \left\|	\begin{aligned}
		&\bigwedge_{\mathbf{k}_{\gamma}^{\beta} \in \mathbf{R}_k} \neg \left( \mathbf{k}_{\gamma}^{\beta} \land \gamma \right) \land \bigwedge_{\mathbf{n}_{\delta}^{\beta} \in \mathbf{R}_n} \neg \left( \mathbf{n}_{\delta}^{\beta} \land \neg \delta \right) \\
		&\land \bigwedge_{\mathbf{d}_{\beta}^{\theta} \in \mathbf{R}_d} \neg \left( \mathbf{d}_{\beta}^{\theta} \land \neg \theta \right) \land \bigvee_{\mathbf{e}_{\varepsilon}^{\beta} \in \mathbf{R}_e} \left( \mathbf{e}_{\varepsilon}^{\beta} \land \varepsilon \right)
	\end{aligned}
	 \right\|.
	\]
	
	Now interpret the connectives in $\mathcal{PL}_{[0,1]}^{*, \otimes}$: $\|a \land b\| = \|a\| \otimes \|b\|$, $\|\neg a\| = N^*(\|a\|)$, and $\|a \vee b\| = N^*(N^*(\|a\|) \otimes N^*(\|b\|))$ (the t-conorm), with finite conjunctions/disjunctions iterated accordingly.
	
	Thus the right-hand side of the equality becomes
	\[
	\begin{aligned}[t]
		&\left( \bigotimes_{\mathbf{k}_{\gamma}^{\beta} \in \mathcal{K}_\beta} N^*\left( \left\| \mathbf{k}_{\gamma}^{\beta} \right\| \otimes \left\| \gamma \right\| \right) \right) \otimes \left( \bigotimes_{\mathbf{n}_{\delta}^{\beta} \in \mathcal{N}_\beta} N^*\left( \left\| \mathbf{n}_{\delta}^{\beta} \right\| \otimes N^*\left(\left\| \delta \right\|\right) \right) \right) \\
		&\otimes \left( \bigotimes_{\mathbf{d}_{\beta}^{\theta} \in \mathcal{D}_\beta} N^*\left( \left\| \mathbf{d}_{\beta}^{\theta} \right\| \otimes N^*\left(\left\| \theta \right\|\right) \right) \right) \otimes \left( N^*\left( \bigotimes_{\mathbf{e}_{\varepsilon}^{\beta} \in \mathcal{E}_\beta} N^*\left( \left\| \mathbf{e}_{\varepsilon}^{\beta} \right\| \otimes \left\| \varepsilon \right\| \right) \right) \right),
	\end{aligned}
	\]
	which is exactly the right-hand side of the fixed-point equation in Definition~\ref{def:cfoe_system_hqaf}. Therefore,
	\[
	\left\| \Phi_\beta \right\| = 1 \iff \|\beta\| = \text{RHS of the CFOE equation}.
	\]
	
	Since this holds for every $\beta \in \mathbb{U}$ and any assignment is trivially consistent among auxiliary elements, we have $\|\cdot\| \vDash_{\mathcal{PL}_{[0,1]}^{*, \otimes}} ec_n(\mathcal{F})$ if and only if $\|\cdot\| \vDash_{eq_{[0,1]}^{*, \otimes}} \mathcal{F}$. Hence the two semantics coincide.
\end{proof}
\subsubsection{Core Properties of CFNE Semantics}\label{4.3.4}

Let $\mathcal{F} = (\mathbf{A}, \mathbf{R}_k, \mathbf{R}_n, \mathbf{R}_d, \mathbf{R}_e, \mathbf{U}, \mathbf{P})$ be an HQAF. For each $\beta \in \mathbb{U}$, let its attacking set, necessary supporting set, deductive supporting set and evidential supporting set be respectively:
\[
\mathcal{K}_\beta = \left\{ \mathbf{k}_{\gamma}^{\beta} \in \mathbf{R}_k \right\},\quad
\mathcal{N}_\beta = \left\{ \mathbf{n}_{\delta}^{\beta} \in \mathbf{R}_n \right\},\quad
\mathcal{D}_\beta = \left\{ \mathbf{d}_{\beta}^{\theta} \in \mathbf{R}_d \right\},\quad
\mathcal{E}_\beta = \left\{ \mathbf{e}_{\varepsilon}^{\beta} \in \mathbf{R}_e \right\}.
\]

According to the CFOE formulation of HQAF, we define the corresponding aggregation function
\[
f_\beta: [0,1]^{m_\beta} \to [0,1]
\]
by
\[
f_\beta = 
\underbrace{\bigotimes_{\mathbf{k}_{\gamma}^{\beta} \in \mathcal{K}_\beta} N^*\left( \left\| \mathbf{k}_{\gamma}^{\beta} \right\| \otimes \left\| \gamma \right\| \right)}_{K(\beta): \text{ attack part}}
\otimes
\underbrace{\bigotimes_{\mathbf{n}_{\delta}^{\beta} \in \mathcal{N}_\beta} N^*\left( \left\| \mathbf{n}_{\delta}^{\beta} \right\| \otimes N^*\left(\left\| \delta \right\|\right) \right)}_{N(\beta): \text{ necessary support part}}
\]
\[
\otimes
\underbrace{\bigotimes_{\mathbf{d}_{\beta}^{\theta} \in \mathcal{D}_\beta} N^*\left( \left\| \mathbf{d}_{\beta}^{\theta} \right\| \otimes N^*\left(\left\| \theta \right\|\right) \right)}_{D(\beta): \text{ deductive support part}}
\otimes
\underbrace{N^*\left( \bigotimes_{\mathbf{e}_{\varepsilon}^{\beta} \in \mathcal{E}_\beta} N^*\left( \left\| \mathbf{e}_{\varepsilon}^{\beta} \right\| \otimes \left\| \varepsilon \right\| \right) \right)}_{E(\beta): \text{ evidential support part}},
\]
where all $\otimes$ denote finite iterations of the continuous t-norm, and $m_\beta$ is the total number of distinct elements (interaction names and source/target elements) relevant to $\beta$. We characterize the core properties of CFNE semantics through the function $f_\beta$, omitting trivial constant assignments for auxiliary elements.

\paragraph{Continuity}
Since both the negation $N^*$ and the t-norm $\otimes$ are continuous by assumption, and $f_\beta$ is built from finite compositions and finite iterations of these operations, $f_\beta$ is continuous on the entire domain $[0,1]^{m_\beta}$.

\paragraph{Commutativity}
The value of $f_\beta$ is invariant under any permutation of elements within each source set, and any permutation of attack groups, necessary support groups, deductive support groups, or evidential support groups. This follows immediately from the commutativity and associativity of the t-norm $\otimes$.

\begin{thm}[Monotonicity of $f_\beta$]\label{thm:monotonicity_fbeta_hqaf}
	The function $f_\beta$ is monotone with respect to the truth value of every element in $\mathbf{U}$, with the following directions:
	\begin{itemize}
		\item Non-increasing with respect to any attack name targeting $\beta$ or any element in the source set of an attack against $\beta$;
		\item Non-decreasing with respect to any necessary support name targeting $\beta$ or any element in the source set of a necessary support for $\beta$;
		\item Non-decreasing with respect to any deductive support name originating from $\beta$ or any element in the target set of a deductive support from $\beta$;
		\item Non-decreasing with respect to any evidential support name targeting $\beta$ or any element in the source set of an evidential support for $\beta$.
	\end{itemize}
\end{thm}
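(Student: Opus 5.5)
The plan is to treat $f_\beta$ as the $\otimes$-product of the four blocks $K(\beta)$, $N(\beta)$, $D(\beta)$ and $E(\beta)$ from Section~\ref{4.3.4}, and to reduce monotonicity of $f_\beta$ to monotonicity of each block in each variable it contains. I will use three standard facts. First, a continuous t-norm $\otimes$ is non-decreasing in each argument, so a finite iteration $\bigotimes$ is non-decreasing in every factor. Second, $N^*$ is non-increasing. Third, composing two non-increasing maps gives a non-decreasing map, while composing a non-decreasing map with a non-increasing one gives a non-increasing map. Since each variable is treated in the role the statement assigns to it, moving one variable and holding all others fixed changes only the factor(s) in which it occurs. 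Monotonicity of $\otimes$ then transfers the direction of that factor to $f_\beta$.

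\textbf{Attack block.} In $N^*(\|\mathbf{k}_\gamma^\beta\|\otimes\|\gamma\|)$ the inner product is non-decreasing in both the attack name and the attacker $\gamma$. Applying $N^*$ makes the factor non-increasing in both, and hence $f_\beta$ is non-increasing in them, as the first item asserts.

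\textbf{Evidential block.} The inner term $\|\mathbf{e}_\varepsilon^\beta\|\otimes\|\varepsilon\|$ is non-decreasing in both variables. The first $N^*$ reverses this, $\bigotimes$ preserves it, and the outer $N^*$ reverses it again. So $E(\beta)$, which is the t-conorm $\bigoplus(\|\mathbf{e}\|\otimes\|\varepsilon\|)$, is non-decreasing in both the evidential support name and its source, giving the fourth item.

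\textbf{Necessary and deductive blocks.} In $N^*(\|\mathbf{n}_\delta^\beta\|\otimes N^*(\|\delta\|))$ the source $\delta$ passes through two negations. The factor is therefore non-decreasing in $\delta$, and likewise in the target $\theta$ of a deductive support in $D(\beta)$. This gives the element parts of the second and third items.

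The hard step is the \emph{support name} part of the second and third items. Read literally as the arrow variable inside $\|\mathbf{n}_\delta^\beta\|\otimes N^*(\|\delta\|)$, the name sits under a single outer $N^*$, so the factor is non-increasing in it. I will first check whether the intended reading concerns the support interaction as a whole, meaning the name together with the supporter, taken jointly in its supportive role. For that reading I will argue that whenever the supporter is fully accepted ($\|\delta\|=1$, respectively $\|\theta\|=1$), the factor equals $1$ for every value of the name. The factor then cannot decrease as the name varies, so the support name never acts against $\beta$. I will also check whether that is the paper's sense of ``non-decreasing''.

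If that cannot be made to match the wording, the route that remains is the element parts of the second and third items, which follow from the double-negation argument above. The support-name clauses would then need an extra hypothesis, and I would flag that gap explicitly rather than claim a proof of them. Finally, I will note that variables fixed by the auxiliary conditions are constant, so they do not affect the argument.
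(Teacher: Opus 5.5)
Your decomposition (monotone $\otimes$, antitone $N^*$, counting the negations on each variable) is the same as the paper's. It correctly proves the attack clause, the evidential clause, and the clauses about the necessary supporter $\delta$ and the deductive target $\theta$.

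Your suspicion about the support-name clauses is justified, and this is exactly where the paper's proof errs. The paper asserts that $\|\mathbf{n}_\delta^\beta\|\otimes N^*(\|\delta\|)$ is non-increasing in $v$ also when $v$ is the name $\mathbf{n}_\delta^\beta$. In fact it is non-decreasing in the name, so the factor $N^*\bigl(\|\mathbf{n}_\delta^\beta\|\otimes N^*(\|\delta\|)\bigr)$, and hence $f_\beta$, is non-increasing in it. The same slip occurs for $\mathbf{d}_\beta^\theta$. These two clauses are false, not merely unproved. Take $\mathbb{A}=\{a,b\}$, $\mathbb{R}_n=\{(a,b)\}$, $\mathbb{R}_k=\mathbb{R}_d=\mathbb{R}_e=\emptyset$ and $\mathbb{P}=\mathbb{U}$. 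For $\beta=b$, the auxiliary interactions $(\bot,b)$, $(b,\top)$ and $(\{\top\},b)$ each contribute a factor $1$, so $f_b=N^*\bigl(\|\mathbf{n}_a^b\|\otimes N^*(\|a\|)\bigr)$. At $\|a\|=0$ this is $N^*(\|\mathbf{n}_a^b\|)$, which falls from $1$ to $0$ as the name rises from $0$ to $1$, for every t-norm and negation. Replacing $\mathbb{R}_n$ by $\mathbb{R}_d=\{(b,a)\}$ gives the same behaviour for $\mathbf{d}_b^a$. The failure is also forced by the paper's own rejection boundary (Theorem~\ref{thm:boundary_fbeta_hqaf}): with $\|\delta\|=0$, name value $1$ gives $f_\beta=0$, whereas name value $0$ turns that factor into $1$.

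So drop your reinterpretation. Constancy of the factor at $\|\delta\|=1$ says nothing about $\|\delta\|<1$. There, a more valid necessary or deductive support imposes a stronger constraint on $\beta$ and can only lower it, so ``the support name never acts against $\beta$'' is false. A joint reading, raising name and supporter together, also fails: with the Product t-norm it gives $1-t(1-t)$, which is not monotone. Your fallback is the right move, but state it as a correction rather than a missing hypothesis. The correct statement is that $f_\beta$ is non-increasing in $\|\mathbf{n}_\delta^\beta\|$ and $\|\mathbf{d}_\beta^\theta\|$, and non-decreasing in $\|\delta\|$ and $\|\theta\|$; include the counterexample above.

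One more step needs care. ``Moving one variable changes only the factor(s) in which it occurs'' transfers a direction only when all those occurrences agree. In a higher-order HQAF an element can play opposite roles for the same $\beta$. For example, let $\gamma$ attack $\beta$ while $\beta$ deductively supports $\gamma$. With the Product t-norm, both names at $1$ and the other factors equal to $1$, this gives $f_\beta=(1-\|\gamma\|)\|\gamma\|$, which is not monotone in $\|\gamma\|$. So the monotonicity claims hold only per argument position of $f_\beta$, not per distinct element as in the definition of $m_\beta$. Both you and the paper assume the per-position reading without saying so.
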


\begin{proof}
	We analyze the four components separately, based on the monotonicity of t-norms and the antitonicity of negation.
	
	\paragraph{Attack part $K(\beta)$}
	Take any attack-related element $u$: either an attack name $\mathbf{k}_{\gamma}^{\beta}$ or a source element $\gamma$ of some attack against $\beta$.
	The inner term $\left\| \mathbf{k}_{\gamma}^{\beta} \right\| \otimes \left\| \gamma \right\|$ is non-decreasing in $u$. Since $N^*$ is antitone, $N^*\left( \left\| \mathbf{k}_{\gamma}^{\beta} \right\| \otimes \left\| \gamma \right\| \right)$ is non-increasing in $u$. The outer t-norm is non-decreasing in each of its factors, so the whole attack part $K(\beta)$ is non-increasing in $u$.
	
	\paragraph{Necessary support part $N(\beta)$}
	Take any necessary support-related element $v$: either a necessary support name $\mathbf{n}_{\delta}^{\beta}$ or a source element $\delta$ of some necessary support for $\beta$.
	The inner term $\left\| \mathbf{n}_{\delta}^{\beta} \right\| \otimes N^*(\left\| \delta \right\|)$ is non-increasing in $v$. Applying the negation $N^*$ reverses the order, so $N^*\left( \left\| \mathbf{n}_{\delta}^{\beta} \right\| \otimes N^*(\left\| \delta \right\|) \right)$ is non-decreasing in $v$. The outer t-norm is non-decreasing in each factor, hence $N(\beta)$ is non-decreasing in $v$.
	
	\paragraph{Deductive support part $D(\beta)$}
	Take any deductive support-related element $w$: either a deductive support name $\mathbf{d}_{\beta}^{\theta}$ or a target element $\theta$ of some deductive support from $\beta$.
	The inner term $\left\| \mathbf{d}_{\beta}^{\theta} \right\| \otimes N^*(\left\| \theta \right\|)$ is non-increasing in $w$. Applying the negation $N^*$ reverses the order, so $N^*\left( \left\| \mathbf{d}_{\beta}^{\theta} \right\| \otimes N^*(\left\| \theta \right\|) \right)$ is non-decreasing in $w$. The outer t-norm is non-decreasing in each factor, hence $D(\beta)$ is non-decreasing in $w$.
	
	\paragraph{Evidential support part $E(\beta)$}
	Take any evidential support-related element $z$: either an evidential support name $\mathbf{e}_{\varepsilon}^{\beta}$ or a source element $\varepsilon$ of some evidential support for $\beta$.
	The inner term $\left\| \mathbf{e}_{\varepsilon}^{\beta} \right\| \otimes \left\| \varepsilon \right\|$ is non-decreasing in $z$. Applying the inner negation $N^*$ yields $N^*\left( \left\| \mathbf{e}_{\varepsilon}^{\beta} \right\| \otimes \left\| \varepsilon \right\| \right)$, which is non-increasing in $z$. The inner t-norm product over all supports is also non-increasing in $z$. Applying the outer negation $N^*$ reverses the order again, hence $E(\beta)$ is non-decreasing in $z$.
	
	\medskip
	Finally, the outer t-norm aggregating $K(\beta), N(\beta), D(\beta), E(\beta)$ is non-decreasing in all four components. Combining the monotonicity directions of each part yields the desired result.
\end{proof}

We first formalize the terminology for interaction validity, then state the four boundary conditions for a given $\beta \in \mathbb{U}$:

\begin{itemize}
	\item An attack $\mathbf{k}_{\gamma}^{\beta}$ is \emph{effective} if $\left\| \mathbf{k}_{\gamma}^{\beta} \right\| = 1$ and $\left\| \gamma \right\| = 1$; it is \emph{ineffective} if $\left\| \mathbf{k}_{\gamma}^{\beta} \right\| = 0$ or $\left\| \gamma \right\| = 0$.
	\item A necessary support $\mathbf{n}_{\delta}^{\beta}$ is \emph{satisfied} if $\left\| \mathbf{n}_{\delta}^{\beta} \right\| = 0$ or $\left\| \delta \right\| = 1$; it is \emph{unsatisfied} if $\left\| \mathbf{n}_{\delta}^{\beta} \right\| = 1$ and $\left\| \delta \right\| = 0$.
	\item A deductive support $\mathbf{d}_{\beta}^{\theta}$ is \emph{satisfied} if $\left\| \mathbf{d}_{\beta}^{\theta} \right\| = 0$ or $\left\| \theta \right\| = 1$; it is \emph{unsatisfied} if $\left\| \mathbf{d}_{\beta}^{\theta} \right\| = 1$ and $\left\| \theta \right\| = 0$.
	\item An evidential support $\mathbf{e}_{\varepsilon}^{\beta}$ is \emph{effective} if $\left\| \mathbf{e}_{\varepsilon}^{\beta} \right\| = 1$ and $\left\| \varepsilon \right\| = 1$; it is \emph{ineffective} if $\left\| \mathbf{e}_{\varepsilon}^{\beta} \right\| = 0$ or $\left\| \varepsilon \right\| = 0$.
\end{itemize}

\begin{itemize}
	\item[(A)] All attacks are ineffective: for every $\mathbf{k}_{\gamma}^{\beta} \in \mathcal{K}_\beta$, $\mathbf{k}_{\gamma}^{\beta}$ is ineffective.
	\item[(N)] All necessary supports are satisfied: for every $\mathbf{n}_{\delta}^{\beta} \in \mathcal{N}_\beta$, $\mathbf{n}_{\delta}^{\beta}$ is satisfied.
	\item[(D)] All deductive supports are satisfied: for every $\mathbf{d}_{\beta}^{\theta} \in \mathcal{D}_\beta$, $\mathbf{d}_{\beta}^{\theta}$ is satisfied.
	\item[(E)] At least one evidential support is effective: there exists $\mathbf{e}_{\varepsilon}^{\beta} \in \mathcal{E}_\beta$ such that $\mathbf{e}_{\varepsilon}^{\beta}$ is effective.
\end{itemize}

\begin{thm}[Boundary Conditions of $f_\beta$]\label{thm:boundary_fbeta_hqaf}
	For every $\beta \in \mathbb{U}$, the following two extremal cases hold:
	\begin{enumerate}
		\item Rejection boundary: if there exists an effective attack, or there exists an unsatisfied necessary support, or there exists an unsatisfied deductive support, or all evidential supports are ineffective, then $f_\beta = 0$.
		\item Acceptance boundary: if all attacks are ineffective, all necessary supports are satisfied, all deductive supports are satisfied, and at least one evidential support is effective, then $f_\beta = 1$.
	\end{enumerate}
\end{thm}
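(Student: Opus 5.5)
The plan is to argue directly from the algebraic form of $f_\beta$. I will use only two kinds of facts. First, the boundary properties of a negation function: $N^*(0)=1$ and $N^*(1)=0$. Second, the boundary properties of a t-norm: $x\otimes 1=x$, and $x\otimes 0=0$ by monotonicity, since $x\otimes 0\le 1\otimes 0=0$.

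From these, a finite iterated t-norm $\bigotimes_i x_i$ equals $0$ as soon as one factor is $0$, and equals $1$ when every factor is $1$. I will also note that, by Definition~\ref{HQAF}, every $\beta\in\mathbb{U}$ has nonempty sets $\mathcal{K}_\beta,\mathcal{N}_\beta,\mathcal{D}_\beta,\mathcal{E}_\beta$. This holds because the auxiliary interactions $\mathbf{k}_\bot^\beta$, $\mathbf{n}_\top^\beta$, $\mathbf{d}_\beta^\top$ and $\mathbf{e}_{\bot}^\beta$ or $\mathbf{e}_\top^\beta$ are added whenever no genuine one exists. Consequently no empty-product convention is needed, and in particular $E(\beta)$ is not a vacuous $N^*(1)$.

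For the \emph{rejection boundary} I will treat the four disjuncts separately and show that each forces one of the four factors of $f_\beta$ to be $0$; absorption by $0$ then gives $f_\beta=0$.
\begin{itemize}
\item If $\mathbf{k}_\gamma^\beta$ is effective, then $\|\mathbf{k}_\gamma^\beta\|\otimes\|\gamma\|=1\otimes 1=1$, so its term is $N^*(1)=0$ and $K(\beta)=0$.
\item If $\mathbf{n}_\delta^\beta$ is unsatisfied, then $\|\mathbf{n}_\delta^\beta\|\otimes N^*(\|\delta\|)=1\otimes N^*(0)=1$, so $N(\beta)=0$.
\item The unsatisfied deductive support case is identical, with $\theta$ in place of $\delta$, giving $D(\beta)=0$.
\item If all evidential supports are ineffective, each $\|\mathbf{e}_\varepsilon^\beta\|\otimes\|\varepsilon\|=0$. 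Hence each inner negation is $1$, the inner iterated t-norm is $1$, and $E(\beta)=N^*(1)=0$.
\end{itemize}

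For the \emph{acceptance boundary} I will show dually that every factor equals $1$.
\begin{itemize}
\item Each ineffective attack has a $0$ argument inside $\|\mathbf{k}_\gamma^\beta\|\otimes\|\gamma\|$, so the product is $0$ and its negation is $1$; thus $K(\beta)=1$.
\item Each satisfied necessary support has either $\|\mathbf{n}_\delta^\beta\|=0$ or $N^*(\|\delta\|)=N^*(1)=0$, so again the inner product is $0$ and its negation is $1$; thus $N(\beta)=1$.
\item The same argument for satisfied deductive supports gives $D(\beta)=1$.
\item An effective evidential support gives inner product $1$, hence an inner negation $0$. That zero factor kills the iterated t-norm, and the outer negation yields $E(\beta)=1$.
\end{itemize}
Then $f_\beta=1\otimes1\otimes1\otimes1=1$.

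There is no serious obstacle; the only step needing care is a matter of stated assumptions rather than the computation. The paper calls $N^*$ merely a ``continuous negation'', so the proof must explicitly invoke the standard boundary conditions $N^*(0)=1$ and $N^*(1)=0$ for negation functions, together with the nonemptiness of the interaction sets guaranteed by the auxiliary elements. I will state both explicitly at the outset. Beyond that, the argument is a routine factor-by-factor evaluation, consistent with the directions established in Theorem~\ref{thm:monotonicity_fbeta_hqaf}.
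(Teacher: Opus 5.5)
Your proposal is correct and follows the paper's proof essentially step for step. For the rejection boundary you use the same four subcases, each forcing one of $K(\beta),N(\beta),D(\beta),E(\beta)$ to $0$, which the t-norm then absorbs; for the acceptance boundary you evaluate every factor to $1$, relying only on $N^*(0)=1$, $N^*(1)=0$ and the identity/annihilator properties of $\otimes$. Your remark that $\mathcal{K}_\beta,\mathcal{N}_\beta,\mathcal{D}_\beta,\mathcal{E}_\beta$ are nonempty is accurate but not needed, since the usual empty-product convention (value $1$) makes the vacuous cases come out consistently anyway.
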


\begin{proof}
	We prove the two cases separately.
	
	\paragraph{Case 1: Rejection boundary}
	We consider four subcases:
	\begin{itemize}
		\item \emph{Effective attack.} Let $\mathbf{k}_{\gamma}^{\beta} \in \mathcal{K}_\beta$ be an effective attack, i.e. $\left\| \mathbf{k}_{\gamma}^{\beta} \right\| = 1$ and $\left\| \gamma \right\| = 1$. Then $\left\| \mathbf{k}_{\gamma}^{\beta} \right\| \otimes \left\| \gamma \right\| = 1$, and thus $N^*(1) = 0$. Hence the corresponding factor in $K(\beta)$ equals $0$. By the annihilation property of t-norms ($0 \otimes a = 0$ for all $a \in [0,1]$), the attack part $K(\beta) = 0$, so $f_\beta = 0 \otimes N(\beta) \otimes D(\beta) \otimes E(\beta) = 0$.
		
		\item \emph{Unsatisfied necessary support.} Let $\mathbf{n}_{\delta}^{\beta} \in \mathcal{N}_\beta$ be an unsatisfied necessary support, i.e. $\left\| \mathbf{n}_{\delta}^{\beta} \right\| = 1$ and $\left\| \delta \right\| = 0$. Then $N^*(\left\| \delta \right\|) = 1$, so $\left\| \mathbf{n}_{\delta}^{\beta} \right\| \otimes N^*(\left\| \delta \right\|) = 1$, and thus $N^*(1) = 0$. Hence the corresponding factor in $N(\beta)$ equals $0$, so $N(\beta) = 0$, and therefore $f_\beta = 0$.
		
		\item \emph{Unsatisfied deductive support.} Let $\mathbf{d}_{\beta}^{\theta} \in \mathcal{D}_\beta$ be an unsatisfied deductive support, i.e. $\left\| \mathbf{d}_{\beta}^{\theta} \right\| = 1$ and $\left\| \theta \right\| = 0$. Then $N^*(\left\| \theta \right\|) = 1$, so $\left\| \mathbf{d}_{\beta}^{\theta} \right\| \otimes N^*(\left\| \theta \right\|) = 1$, and thus $N^*(1) = 0$. Hence the corresponding factor in $D(\beta)$ equals $0$, so $D(\beta) = 0$, and therefore $f_\beta = 0$.
		
		\item \emph{All evidential supports ineffective.} For every $\mathbf{e}_{\varepsilon}^{\beta} \in \mathcal{E}_\beta$, since it is ineffective, either $\left\| \mathbf{e}_{\varepsilon}^{\beta} \right\| = 0$ or $\left\| \varepsilon \right\| = 0$. In either case, $\left\| \mathbf{e}_{\varepsilon}^{\beta} \right\| \otimes \left\| \varepsilon \right\| = 0$, so $N^*\left( \left\| \mathbf{e}_{\varepsilon}^{\beta} \right\| \otimes \left\| \varepsilon \right\| \right) = N^*(0) = 1$. Therefore the inner t-norm product over all supports equals $\bigotimes 1 = 1$. Applying the outer negation yields $E(\beta) = N^*(1) = 0$, so $f_\beta = K(\beta) \otimes N(\beta) \otimes D(\beta) \otimes 0 = 0$.
	\end{itemize}
	In all four subcases, $f_\beta = 0$.
	
	\paragraph{Case 2: Acceptance boundary}
	If all attacks are ineffective, then for every $\mathbf{k}_{\gamma}^{\beta} \in \mathcal{K}_\beta$, $\left\| \mathbf{k}_{\gamma}^{\beta} \right\| \otimes \left\| \gamma \right\| = 0$, and $N^*(0) = 1$. Hence every factor in $K(\beta)$ equals $1$, so $K(\beta) = \bigotimes 1 = 1$.
	
	If all necessary supports are satisfied, then for every $\mathbf{n}_{\delta}^{\beta} \in \mathcal{N}_\beta$, $\left\| \mathbf{n}_{\delta}^{\beta} \right\| \otimes N^*(\left\| \delta \right\|) = 0$, and $N^*(0) = 1$. Hence every factor in $N(\beta)$ equals $1$, so $N(\beta) = 1$.
	
	If all deductive supports are satisfied, then for every $\mathbf{d}_{\beta}^{\theta} \in \mathcal{D}_\beta$, $\left\| \mathbf{d}_{\beta}^{\theta} \right\| \otimes N^*(\left\| \theta \right\|) = 0$, and $N^*(0) = 1$. Hence every factor in $D(\beta)$ equals $1$, so $D(\beta) = 1$.
	
	If at least one evidential support is effective, let $\mathbf{e}_{\varepsilon}^{\beta} \in \mathcal{E}_\beta$ be such that $\left\| \mathbf{e}_{\varepsilon}^{\beta} \right\| = 1$ and $\left\| \varepsilon \right\| = 1$. Then $\left\| \mathbf{e}_{\varepsilon}^{\beta} \right\| \otimes \left\| \varepsilon \right\| = 1$, so $N^*(1) = 0$. The inner t-norm product over all supports contains at least one factor $0$, hence the whole product equals $0$. Consequently, $E(\beta) = N^*(0) = 1$.
	
	Combining the four parts, $f_\beta = 1 \otimes 1 \otimes 1 \otimes 1 = 1$.
\end{proof}
\begin{thm}[CFOE Equation Trifurcation]\label{thm:cfoe_trifurcation_hqaf}
	Assume that the t-norm $\otimes$ employed in the function $f_\beta$ is zero-divisor-free (i.e., $x \otimes y = 0 \implies x = 0$ or $y = 0$). For every $\beta \in \mathbb{U}$, the following equivalences hold:
	\begin{enumerate}
		\item $f_\beta = 1 \iff \text{(A) and (N) and (D) and (E)}$.
		\item $f_\beta = 0 \iff$ there exists an effective attack, or there exists an unsatisfied necessary support, or there exists an unsatisfied deductive support, or all evidential supports are ineffective.
		\item $0 < f_\beta < 1 \iff$ neither of the above holds.
	\end{enumerate}
\end{thm}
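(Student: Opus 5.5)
The plan is to obtain the two ``$\Leftarrow$'' directions of items~1 and~2 directly from Theorem~\ref{thm:boundary_fbeta_hqaf}, prove the two ``$\Rightarrow$'' directions by a factor-by-factor analysis that uses zero-divisor-freeness, and derive item~3 as the complement of items~1 and~2.

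Besides zero-divisor-freeness, I will use the following elementary facts.
\begin{enumerate}
\item For any t-norm, $x\otimes y\le\min\{x,y\}$. Hence a finite iterate $\bigotimes_i x_i$ equals $1$ iff every $x_i=1$.
\item By induction on the number of factors, zero-divisor-freeness extends to finite iterates: $\bigotimes_i x_i=0$ iff some $x_i=0$.
\item An empty iterate equals $1$.
\item The negation satisfies $N^*(x)=0\iff x=1$ and $N^*(x)=1\iff x=0$.
\end{enumerate}
Fact~4 holds for the standard negation $1-x$ used in all the logics of the paper, and more generally for any strict negation. For a bare continuous negation I will state it as the standing assumption, since this is exactly where the argument needs more than continuity. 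I also note that $\mathcal{E}_\beta\neq\emptyset$ for every $\beta\in\mathbb{U}$: by Definition~\ref{HQAF}, $\beta$ receives either a genuine evidential support or an auxiliary one from $\top$ or $\bot$. So the evidential factor is never an empty disjunction.

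For item~1, ``$\Rightarrow$'': if $f_\beta=1$, then by fact~1 each of $K(\beta),N(\beta),D(\beta),E(\beta)$ equals $1$, and so does each of their inner factors.
\begin{itemize}
\item Attack part: $N^*(\|\mathbf{k}_\gamma^\beta\|\otimes\|\gamma\|)=1$ gives $\|\mathbf{k}_\gamma^\beta\|\otimes\|\gamma\|=0$. Zero-divisor-freeness then gives $\|\mathbf{k}_\gamma^\beta\|=0$ or $\|\gamma\|=0$, so the attack is ineffective, which yields (A).
\item Necessary and deductive parts: the same argument gives $\|\mathbf{n}_\delta^\beta\|=0$ or $N^*(\|\delta\|)=0$, i.e.\ $\|\delta\|=1$. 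This yields (N), and analogously (D).
\item Evidential part: $E(\beta)=1$ forces the inner iterate $\bigotimes N^*(\|\mathbf{e}_\varepsilon^\beta\|\otimes\|\varepsilon\|)$ to be $0$. By fact~2, some factor is $0$, so $\|\mathbf{e}_\varepsilon^\beta\|\otimes\|\varepsilon\|=1$, and fact~1 gives both values equal to $1$. This yields (E).
\end{itemize}

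For item~2, ``$\Rightarrow$'': if $f_\beta=0$, then by fact~2 one of the four parts is $0$.
\begin{itemize}
\item If it is $K$, $N$ or $D$, fact~2 again gives a single factor $N^*(\cdot)=0$. Its argument is then $1$, and fact~1 makes both components $1$. This produces an effective attack, or an unsatisfied necessary support (using $N^*(\|\delta\|)=1\Rightarrow\|\delta\|=0$), or an unsatisfied deductive support.
\item If $E(\beta)=0$, then the inner iterate equals $1$, so every factor $N^*(\|\mathbf{e}_\varepsilon^\beta\|\otimes\|\varepsilon\|)=1$. Each product is therefore $0$, and zero-divisor-freeness shows every evidential support is ineffective.
\end{itemize}

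Item~3 follows because $f_\beta\in[0,1]$, so $0<f_\beta<1$ iff $f_\beta\neq1$ and $f_\beta\neq0$, i.e.\ iff neither condition in items~1 and~2 holds.

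The only delicate points are the precise boundary behaviour of $N^*$ (fact~4) and the iteration of zero-divisor-freeness (fact~2). Everything else is bookkeeping parallel to the proof of Theorem~\ref{thm:boundary_fbeta_hqaf}.
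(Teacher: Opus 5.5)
Your proposal is correct and follows essentially the same route as the paper: decompose $f_\beta=K(\beta)\otimes N(\beta)\otimes D(\beta)\otimes E(\beta)$, analyse each part factor by factor using zero-divisor-freeness (extended to finite iterates) and the fact that an iterate equals $1$ only when every factor does, and obtain item~3 as the complement; the only cosmetic difference is that you import the ``$\Leftarrow$'' directions from Theorem~\ref{thm:boundary_fbeta_hqaf} instead of writing each step as a biconditional. Your separate fact~4 is well founded, because the paper lists only $N^*(0)=1$ and $N^*(1)=0$ but silently uses the converses, and for a non-strict continuous negation such as $N^*(x)=\min\{1,2-2x\}$ the statement genuinely fails (an attack with $\|\mathbf{k}_\gamma^\beta\|=1$ and $\|\gamma\|=\tfrac12$ contributes the factor $1$ to $K(\beta)$ without being ineffective).
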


\begin{proof}
	Write $f_\beta = K(\beta) \otimes N(\beta) \otimes D(\beta) \otimes E(\beta)$, where
	\begin{align*}
		K(\beta) &= \bigotimes_{\mathbf{k}_{\gamma}^{\beta} \in \mathcal{K}_\beta} N^*\left( \left\| \mathbf{k}_{\gamma}^{\beta} \right\| \otimes \left\| \gamma \right\| \right), \\
		N(\beta) &= \bigotimes_{\mathbf{n}_{\delta}^{\beta} \in \mathcal{N}_\beta} N^*\left( \left\| \mathbf{n}_{\delta}^{\beta} \right\| \otimes N^*\left(\left\| \delta \right\|\right) \right), \\
		D(\beta) &= \bigotimes_{\mathbf{d}_{\beta}^{\theta} \in \mathcal{D}_\beta} N^*\left( \left\| \mathbf{d}_{\beta}^{\theta} \right\| \otimes N^*\left(\left\| \theta \right\|\right) \right), \\
		E(\beta) &= N^*\left( \bigotimes_{\mathbf{e}_{\varepsilon}^{\beta} \in \mathcal{E}_\beta} N^*\left( \left\| \mathbf{e}_{\varepsilon}^{\beta} \right\| \otimes \left\| \varepsilon \right\| \right) \right).
	\end{align*}
	We use the facts: $N^*(0) = 1$, $N^*(1) = 0$, and $\otimes$ is monotone, associative, commutative, has identity $1$ and zero $0$, and is zero-divisor-free.
	
	\textit{1. $f_\beta = 1 \iff$ (A) and (N) and (D) and (E).}
	
	Since $1 \otimes x = x$ and $x \otimes y = 1$ implies $x = y = 1$, we have $f_\beta = 1$ if and only if $K(\beta) = N(\beta) = D(\beta) = E(\beta) = 1$.
	
	Now $K(\beta) = 1$ iff every factor in the product is $1$, i.e., for every attack $\mathbf{k}_{\gamma}^{\beta}$, $N^*\left( \left\| \mathbf{k}_{\gamma}^{\beta} \right\| \otimes \left\| \gamma \right\| \right) = 1$, which is equivalent to $\left\| \mathbf{k}_{\gamma}^{\beta} \right\| \otimes \left\| \gamma \right\| = 0$. By zero-divisor-freeness, this holds iff either $\left\| \mathbf{k}_{\gamma}^{\beta} \right\| = 0$ or $\left\| \gamma \right\| = 0$, i.e., condition (A).
	
	$N(\beta) = 1$ iff every factor in the product is $1$, i.e., for every necessary support $\mathbf{n}_{\delta}^{\beta}$, $N^*\left( \left\| \mathbf{n}_{\delta}^{\beta} \right\| \otimes N^*(\left\| \delta \right\|) \right) = 1$, which is equivalent to $\left\| \mathbf{n}_{\delta}^{\beta} \right\| \otimes N^*(\left\| \delta \right\|) = 0$. By zero-divisor-freeness, this holds iff either $\left\| \mathbf{n}_{\delta}^{\beta} \right\| = 0$ or $N^*(\left\| \delta \right\|) = 0$, i.e., $\left\| \delta \right\| = 1$. This is exactly condition (N).
	
	$D(\beta) = 1$ iff every factor in the product is $1$, i.e., for every deductive support $\mathbf{d}_{\beta}^{\theta}$, $N^*\left( \left\| \mathbf{d}_{\beta}^{\theta} \right\| \otimes N^*(\left\| \theta \right\|) \right) = 1$, which is equivalent to $\left\| \mathbf{d}_{\beta}^{\theta} \right\| \otimes N^*(\left\| \theta \right\|) = 0$. By zero-divisor-freeness, this holds iff either $\left\| \mathbf{d}_{\beta}^{\theta} \right\| = 0$ or $N^*(\left\| \theta \right\|) = 0$, i.e., $\left\| \theta \right\| = 1$. This is exactly condition (D).
	
	$E(\beta) = 1$ iff $N^*\left( \bigotimes_{\mathbf{e}_{\varepsilon}^{\beta} \in \mathcal{E}_\beta} N^*\left( \left\| \mathbf{e}_{\varepsilon}^{\beta} \right\| \otimes \left\| \varepsilon \right\| \right) \right) = 1$, i.e., the inner t-norm product equals $0$. By zero-divisor-freeness, a product of numbers in $[0,1]$ is $0$ iff at least one factor is $0$. Thus $E(\beta) = 1$ iff there exists an evidential support $\mathbf{e}_{\varepsilon}^{\beta}$ such that $N^*\left( \left\| \mathbf{e}_{\varepsilon}^{\beta} \right\| \otimes \left\| \varepsilon \right\| \right) = 0$, i.e., $\left\| \mathbf{e}_{\varepsilon}^{\beta} \right\| \otimes \left\| \varepsilon \right\| = 1$, which is equivalent to $\left\| \mathbf{e}_{\varepsilon}^{\beta} \right\| = 1$ and $\left\| \varepsilon \right\| = 1$, i.e., condition (E).
	
	Hence $f_\beta = 1$ if and only if (A), (N), (D) and (E) all hold.
	
	\textit{2. $f_\beta = 0 \iff$ the rejection condition holds.}
	
	Since $x \otimes y = 0$ iff $x = 0$ or $y = 0$ (zero-divisor-free property), $f_\beta = 0$ iff $K(\beta) = 0$ or $N(\beta) = 0$ or $D(\beta) = 0$ or $E(\beta) = 0$.
	
	$K(\beta) = 0$ iff there exists an attack with $N^*\left( \left\| \mathbf{k}_{\gamma}^{\beta} \right\| \otimes \left\| \gamma \right\| \right) = 0$, i.e., $\left\| \mathbf{k}_{\gamma}^{\beta} \right\| \otimes \left\| \gamma \right\| = 1$, which is equivalent to $\left\| \mathbf{k}_{\gamma}^{\beta} \right\| = 1$ and $\left\| \gamma \right\| = 1$, i.e., an effective attack exists.
	
	$N(\beta) = 0$ iff there exists a necessary support with $N^*\left( \left\| \mathbf{n}_{\delta}^{\beta} \right\| \otimes N^*(\left\| \delta \right\|) \right) = 0$, i.e., $\left\| \mathbf{n}_{\delta}^{\beta} \right\| \otimes N^*(\left\| \delta \right\|) = 1$, which is equivalent to $\left\| \mathbf{n}_{\delta}^{\beta} \right\| = 1$ and $N^*(\left\| \delta \right\|) = 1$, i.e., $\left\| \delta \right\| = 0$. This means an unsatisfied necessary support exists.
	
	$D(\beta) = 0$ iff there exists a deductive support with $N^*\left( \left\| \mathbf{d}_{\beta}^{\theta} \right\| \otimes N^*(\left\| \theta \right\|) \right) = 0$, i.e., $\left\| \mathbf{d}_{\beta}^{\theta} \right\| \otimes N^*(\left\| \theta \right\|) = 1$, which is equivalent to $\left\| \mathbf{d}_{\beta}^{\theta} \right\| = 1$ and $N^*(\left\| \theta \right\|) = 1$, i.e., $\left\| \theta \right\| = 0$. This means an unsatisfied deductive support exists.
	
	$E(\beta) = 0$ iff $N^*\left( \bigotimes_{\mathbf{e}_{\varepsilon}^{\beta} \in \mathcal{E}_\beta} N^*\left( \left\| \mathbf{e}_{\varepsilon}^{\beta} \right\| \otimes \left\| \varepsilon \right\| \right) \right) = 0$, i.e., the inner t-norm product equals $1$. This is equivalent to every factor in the product being $1$, i.e., for every evidential support $\mathbf{e}_{\varepsilon}^{\beta}$, $N^*\left( \left\| \mathbf{e}_{\varepsilon}^{\beta} \right\| \otimes \left\| \varepsilon \right\| \right) = 1$, equivalently $\left\| \mathbf{e}_{\varepsilon}^{\beta} \right\| \otimes \left\| \varepsilon \right\| = 0$. By zero-divisor-freeness, this holds iff either $\left\| \mathbf{e}_{\varepsilon}^{\beta} \right\| = 0$ or $\left\| \varepsilon \right\| = 0$, meaning all evidential supports are ineffective.
	
	Hence $f_\beta = 0$ if and only if the rejection condition holds.
	
	\textit{3. The intermediate case.}
	
	The values of $f_\beta$ lie in $[0,1]$. The above two cases characterize exactly when $f_\beta = 0$ and when $f_\beta = 1$. Therefore, if neither condition holds, we have $0 < f_\beta < 1$. Conversely, if $0 < f_\beta < 1$, then neither $f_\beta = 0$ nor $f_\beta = 1$ holds, so neither the acceptance nor the rejection condition is satisfied. This completes the proof.
\end{proof}

\begin{thm}[Existence of Solutions]\label{thm:solution_existence_hqaf}
	For every HQAF $\mathcal{F}=(\mathbf{A},\mathbf{R}_k,\mathbf{R}_n,\mathbf{R}_d$, $\mathbf{R}_e,\mathbf{U},\mathbf{P})$, the CFOE system $eq_{[0,1]}^{*,\otimes}$ has at least one solution. Consequently, the CFNE semantic model set $\mathfrak{LS}_{ec_n}^{\mathcal{PL}_{[0,1]}^{*,\otimes}}(\mathcal{F})$ is non-empty for every $\mathcal{F}$.
\end{thm}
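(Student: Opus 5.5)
The plan is to apply Brouwer's fixed-point theorem to the operator defined by the CFOE equations on the cube of non-auxiliary values. Let $V=\mathbb{U}$ be the set of non-auxiliary elements. The auxiliary elements $\bot,\top$ and the auxiliary interactions in $(\mathbf{R}_k\cup\mathbf{R}_n\cup\mathbf{R}_d\cup\mathbf{R}_e)\setminus(\mathbb{R}_k\cup\mathbb{R}_n\cup\mathbb{R}_d\cup\mathbb{R}_e)$ have fixed values $0$ or $1$. Any vector $x\in[0,1]^{V}$ therefore extends uniquely to a total assignment $\|\cdot\|_x:\mathbf{U}\to[0,1]$ that satisfies the auxiliary-element conditions. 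Define
\[
T:[0,1]^{V}\to[0,1]^{V},\qquad T(x)_\beta = f_\beta\bigl(\|\cdot\|_x\bigr)\quad(\beta\in V),
\]
where $f_\beta$ is the aggregation function of Section~\ref{4.3.4}. The solutions of $eq_{[0,1]}^{*,\otimes}$ are exactly the assignments $\|\cdot\|_x$ with $T(x)=x$. This holds because the auxiliary conditions are built into the extension, and the fixed-point equation for $\beta$ is precisely $\|\beta\|=f_\beta$.

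The first step is to check that $T$ is well defined as a map into the cube. Each $f_\beta$ takes values in $[0,1]$, since $N^*$ and $\otimes$ map into $[0,1]$. Finiteness of $\mathbb{U}$ makes $[0,1]^{V}$ a finite-dimensional compact convex set. Every sub-collection $\mathcal{K}_\beta,\mathcal{N}_\beta,\mathcal{D}_\beta,\mathcal{E}_\beta$ is finite, and an empty iteration of $\otimes$ equals $1$. In particular, if $\mathcal{E}_\beta$ were empty, $E(\beta)=N^*(1)=0$, which is still a legitimate value. Definition~\ref{HQAF} in fact guarantees $\mathcal{E}_\beta\neq\emptyset$ through the auxiliary evidential supports from $\{\bot\}$ or $\{\top\}$, but the argument does not depend on this.

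The second step is continuity of $T$. Each coordinate $T(x)_\beta$ is $f_\beta$ composed with the affine map $x\mapsto\|\cdot\|_x$, which is a coordinate projection together with constants. By the Continuity paragraph of Section~\ref{4.3.4}, $f_\beta$ is a finite composition of the continuous maps $N^*$ and $\otimes$, so every coordinate is continuous and hence $T$ is continuous.

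The third step applies Brouwer's fixed-point theorem to get $x^*$ with $T(x^*)=x^*$. Then $\|\cdot\|_{x^*}\vDash_{eq_{[0,1]}^{*,\otimes}}\mathcal{F}$, and Theorem~\ref{thm:cfoe_cfne_equivalence_hqaf} transfers this to a model of $ec_n(\mathcal{F})$, giving non-emptiness of the CFNE model set.

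The main obstacle is bookkeeping rather than analysis. An element $\beta\in\mathbb{U}$ may itself be an interaction name appearing as an argument of $f_{\beta'}$, or even of its own $f_\beta$, for instance through self-loops or deductive supports pointing back. Simultaneous evaluation through the single map $T$ handles this uniformly, so no stratification or acyclicity is needed. One point must be checked carefully: the auxiliary elements are excluded from $V$ and held constant, so the only constraints imposed are those of Definition~\ref{def:cfoe_system_hqaf}. Monotonicity (Theorem~\ref{thm:monotonicity_fbeta_hqaf}) is not used, since $T$ is not globally monotone and Knaster--Tarski does not apply directly; continuity is the essential ingredient.
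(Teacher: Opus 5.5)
Your proposal is correct and follows essentially the same route as the paper. Both arguments apply Brouwer's fixed-point theorem to the continuous self-map of a unit cube given by the right-hand sides of the CFOE equations, and then transfer the fixed point to a CFNE model via Theorem~\ref{thm:cfoe_cfne_equivalence_hqaf}. The only difference is cosmetic: the paper keeps the auxiliary coordinates in the cube as constant components $F_i\equiv c_i$, whereas you eliminate them by working on $[0,1]^{\mathbb{U}}$ and extending each vector by the fixed constants.
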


\begin{proof}
	Let $n=|\mathbf{U}|$ and enumerate all elements of $\mathbf{U}$ as $\beta_1,\beta_2,\ldots,\beta_n$. Every assignment $\|\cdot\|:\mathbf{U}\to[0,1]$ corresponds uniquely to a vector $x=(x_1,x_2,\ldots,x_n)\in[0,1]^n$ with $x_i=\|\beta_i\|$.
	
	The CFOE system $eq_{[0,1]}^{*,\otimes}$ consists of two types of equations. For every auxiliary element $\beta_i$, the truth value is fixed by a constant equation
	\[
	\|\beta_i\|=c_i,
	\]
	where $c_i=0$ if $\beta_i=\bot$, and $c_i=1$ if $\beta_i=\top$ or $\beta_i \in (\mathbf{R}_k \cup \mathbf{R}_n \cup \mathbf{R}_d \cup \mathbf{R}_e) \setminus (\mathbb{R}_k \cup \mathbb{R}_n \cup \mathbb{R}_d \cup \mathbb{R}_e)$. For every non-auxiliary element $\beta_i$, the truth value is determined by the fixed-point equation
	\[
	\|\beta_i\|=f_{\beta_i}\bigl(\|x_{i_1}\|,\ldots,\|x_{i_{m}}\|\bigr),
	\]
	where $f_{\beta_i}$ is the aggregation function built in Definition \ref{def:cfoe_system_hqaf}.
	
	Define the vector-valued map
	\[
	F:[0,1]^n\to[0,1]^n,\qquad F(x)=(F_1(x),F_2(x),\ldots,F_n(x)),
	\]
	by
	\[
	F_i(x)=
	\begin{cases}
		c_i, & \text{if } \beta_i \text{ is an auxiliary element},\\[1mm]
		f_{\beta_i}(x), & \text{if } \beta_i \text{ is a non-auxiliary element}.
	\end{cases}
	\]
	We verify the hypotheses of Brouwer's fixed-point theorem.
	
	\smallskip
	\noindent\textbf{1. Domain.} The unit cube $[0,1]^n$ is non-empty, compact, and convex.
	
	\smallskip
	\noindent\textbf{2. Continuity.} For auxiliary elements, $F_i$ is a constant function, hence continuous. For non-auxiliary elements, $f_{\beta_i}$ is continuous on $[0,1]^n$ because it is a finite composition of the continuous negation $N^*$ and the continuous t-norm $\otimes$. Thus every component $F_i$ is continuous, so $F$ is continuous.
	
	\smallskip
	\noindent\textbf{3. Self-mapping.} For any $x\in[0,1]^n$, each $F_i(x)$ lies in $[0,1]$: this is immediate for auxiliary elements since $c_i\in\{0,1\}$, and for non-auxiliary elements since $f_{\beta_i}$ is closed on $[0,1]$. Hence $F(x)\in[0,1]^n$.
	
	By Brouwer's fixed-point theorem, there exists $x^*\in[0,1]^n$ such that $F(x^*)=x^*$. Unfolding the definition of $F$, for every auxiliary element $\beta_i$ we have $x_i^*=c_i$, so the constant equations are satisfied. For every non-auxiliary element $\beta_i$ we have $x_i^*=f_{\beta_i}(x^*)$, so the corresponding fixed-point equation is satisfied. Therefore the assignment $\|\cdot\|^*$ induced by $x^*$ is a solution of the CFOE system $eq_{[0,1]}^{*,\otimes}$.
	
	Finally, by the equivalence of CFOE and CFNE semantics, this solution is also a model of the CFNE semantics. Hence the model set $\mathfrak{LS}_{ec_n}^{\mathcal{PL}_{[0,1]}^{*,\otimes}}(\mathcal{F})$ is non-empty.
\end{proof}
\begin{cor}\label{cor2}
	For any HQAF, each of the fuzzy encoded semantics $\mathfrak{LS}_{ec_n}^{\mathcal{PL}_{[0,1]}^G}$, $\mathfrak{LS}_{ec_n}^{\mathcal{PL}_{[0,1]}^P}$, and $\mathfrak{LS}_{ec_n}^{\mathcal{PL}_{[0,1]}^L}$ has at least one model.
\end{cor}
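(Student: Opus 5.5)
The corollary is a direct instance of Theorem~\ref{thm:solution_existence_hqaf}. That theorem asks only that the logic $\mathcal{PL}_{[0,1]}^{*,\otimes}$ use a continuous negation $N^*$ and a continuous t-norm $\otimes$ with its residuated implication. The plan is to check, for each of $\mathcal{PL}_{[0,1]}^G$, $\mathcal{PL}_{[0,1]}^P$ and $\mathcal{PL}_{[0,1]}^L$, that these hypotheses hold, and then to invoke the theorem three times.

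\textbf{Step 1 (negation).} All three logics use the standard negation $N(m)=1-m$. This map is continuous, antitone, and satisfies $N(0)=1$, $N(1)=0$, so it qualifies as the continuous negation $N^*$.

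\textbf{Step 2 (t-norms).} I will check continuity of each t-norm on $[0,1]^2$:
\begin{itemize}
\item[] $T_G(m,n)=\min\{m,n\}$ is continuous as a minimum of coordinate projections;
\item[] $T_P(m,n)=mn$ is a polynomial, hence continuous;
\item[] $T_L(m,n)=\max\{0,m+n-1\}$ is a maximum of continuous functions, hence continuous.
\end{itemize}
Each logic is equipped with its residuum $I_G$, $I_P$, $I_L$ respectively, so each is an instance of $\mathcal{PL}_{[0,1]}^{*,\otimes}$ in the sense of Definition~\ref{def:cfne_semantics_hqaf}.

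\textbf{Step 3 (apply the theorem).} For an arbitrary HQAF $\mathcal{F}$, Theorem~\ref{thm:solution_existence_hqaf} then gives that $\mathfrak{LS}_{ec_n}^{\mathcal{PL}_{[0,1]}^{*,\otimes}}(\mathcal{F})$ is non-empty for each of the three choices. This is exactly the claim for $\mathfrak{LS}_{ec_n}^{\mathcal{PL}_{[0,1]}^G}$, $\mathfrak{LS}_{ec_n}^{\mathcal{PL}_{[0,1]}^P}$ and $\mathfrak{LS}_{ec_n}^{\mathcal{PL}_{[0,1]}^L}$.

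There is no real obstacle; the only point needing care is a notational one. The disjunction in $ec_n$ is evaluated in the CFOE system through the dual t-conorm $N^*(N^*(x)\otimes N^*(y))$, not through $\max$. For the Gödel logic with standard negation the dual t-conorm is exactly $\max$, so the two agree. For Product and Łukasiewicz, if $\vee$ is read as $\max$ rather than as the dual t-conorm, I would rerun the Brouwer argument from Theorem~\ref{thm:solution_existence_hqaf} directly with $\max$ in place of the t-conorm. Nothing changes, because $\max$ is also continuous and maps $[0,1]$ into $[0,1]$. The equivalence $\|a\leftrightarrow b\|=1 \iff \|a\|=\|b\|$ holds for any residuum, so a fixed point of the map is still a model of $ec_n(\mathcal{F})$.
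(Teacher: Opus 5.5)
Your proposal is correct and matches the paper's proof, which simply instantiates Theorem~\ref{thm:solution_existence_hqaf} with the standard negation and the Gödel, Product and Łukasiewicz t-norms; your continuity checks just make those hypotheses explicit. Your aside about reading $\vee$ as $\max$ is sound but not needed, since the paper evaluates disjunction via the dual t-conorm $N^*(N^*(x)\otimes N^*(y))$ throughout.
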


\begin{proof}
	Immediate from Theorem \ref{thm:solution_existence_hqaf} by taking the t-norm $\otimes$ to be the Gödel, Product, or Łukasiewicz t-norm, respectively.
\end{proof}
\subsection{Relationships between CFNE Semantics and 3-Valued Semantics}

We first define the ternarization function that maps fuzzy assignments to three-valued assignments. Let $\mathcal{LAB}_3$ denote the set of all 3-valued labellings of HQAF on $\{0,1,\frac{1}{2}\}$.

\begin{defn}[Ternarization for HQAF]\label{def:ternarization_hqaf}
	The ternarization function $T_3 : \mathcal{LAB} \to \mathcal{LAB}_3$ maps a fuzzy assignment $\|\cdot\|$ to a $3$-valued assignment $\|\cdot\|_3 = T_3(\|\cdot\|)$ defined by
	\[
	\|x\|_3 =
	\begin{cases}
		1, & \|x\| = 1, \\
		0, & \|x\| = 0, \\
		\frac{1}{2}, & \text{otherwise}.
	\end{cases}
	\]
\end{defn}

\subsubsection{Semantic Correspondence under Zero‑Divisor‑Free t‑Norms}

Recall that the three-valued equational system $eq_3^{HQ}$ (Definition~\ref{def:3val_equational_hqaf}) is given by
\[
\|\beta\|_3 = \min\left\{ K_3(\beta),\, N_3(\beta),\, D_3(\beta),\, E_3(\beta) \right\},
\]
with
\begin{align*}
	K_3(\beta) &= \min_{\mathbf{k}_{\gamma}^{\beta} \in \mathbf{R}_k} \max\left\{ 1-\left\| \mathbf{k}_{\gamma}^{\beta} \right\|_3,\, 1-\left\| \gamma \right\|_3 \right\}, \\
	N_3(\beta) &= \min_{\mathbf{n}_{\delta}^{\beta} \in \mathbf{R}_n} \max\left\{ 1-\left\| \mathbf{n}_{\delta}^{\beta} \right\|_3,\, \left\| \delta \right\|_3 \right\}, \\
	D_3(\beta) &= \min_{\mathbf{d}_{\beta}^{\theta} \in \mathbf{R}_d} \max\left\{ 1-\left\| \mathbf{d}_{\beta}^{\theta} \right\|_3,\, \left\| \theta \right\|_3 \right\}, \\
	E_3(\beta) &= \max_{\mathbf{e}_{\varepsilon}^{\beta} \in \mathbf{R}_e} \min\left\{ \left\| \mathbf{e}_{\varepsilon}^{\beta} \right\|_3,\, \left\| \varepsilon \right\|_3 \right\},
\end{align*}
where all operations are evaluated over the three-valued domain $\left\{0, \frac{1}{2}, 1\right\}$.

\begin{thm}[Ternarisation of CFNE Models]\label{thm:ternarization_cfne_hqaf}
	Let the CFNE semantics be induced by the normal encoding and a propositional logic $\mathcal{PL}_{[0,1]}$ equipped with a continuous negation and a continuous zero‑divisor‑free t‑norm. For any HQAF $\mathcal{F}$ and any fuzzy assignment $\|\cdot\|$,
	\[
	\|\cdot\| \vDash_{CFNE} \mathcal{F} \implies T_3(\|\cdot\|) \text{ is a solution of } eq_3^{HQ}.
	\]
	Consequently, by Corollary~\ref{thm:3val_equivalence_hqaf}, $T_3(\|\cdot\|)$ is an adjacent complete labelling of $\mathcal{F}$.
\end{thm}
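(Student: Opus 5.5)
The plan is to pass from the fuzzy model to its ternarization through the trichotomy of Theorem~\ref{thm:cfoe_trifurcation_hqaf}, exploiting the fact that every acceptance and rejection condition refers only to whether values are exactly $0$ or exactly $1$.

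First, let $\|\cdot\|$ be a CFNE model and write $\|\cdot\|_3=T_3(\|\cdot\|)$. By Theorem~\ref{thm:cfoe_cfne_equivalence_hqaf}, $\|\cdot\|$ solves the CFOE system, so $\|\beta\|=f_\beta$ for every $\beta\in\mathbb{U}$, and the auxiliary elements receive the values $0$ or $1$. Since $T_3$ fixes $0$ and $1$, the auxiliary-element conditions of $eq_3^{HQ}$ hold for $\|\cdot\|_3$. It remains to verify $\|\beta\|_3=\min\{K_3(\beta),N_3(\beta),D_3(\beta),E_3(\beta)\}$ for each $\beta\in\mathbb{U}$.

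Second, observe that conditions (A), (N), (D), (E) and the rejection condition of Theorem~\ref{thm:boundary_fbeta_hqaf} are statements of the form ``$\|x\|=0$'' or ``$\|x\|=1$''. Because $\|x\|=1\iff\|x\|_3=1$ and $\|x\|=0\iff\|x\|_3=0$, each of these conditions holds for $\|\cdot\|$ exactly when it holds for $\|\cdot\|_3$. I will then read off, as in Cases 1 and 2 of the proof of Theorem~\ref{thm:3val_encoded_equivalence_hqaf} (equivalently, from the computation in Theorem~\ref{thm:3eq_encoded_equivalence_hqaf}), the following facts for $\Phi_3(\beta):=\min\{K_3,N_3,D_3,E_3\}$:
\begin{itemize}
\item $\Phi_3(\beta)=1$ iff (A), (N), (D) and (E) hold for $\|\cdot\|_3$;
\item $\Phi_3(\beta)=0$ iff the rejection condition holds for $\|\cdot\|_3$;
\item otherwise $\Phi_3(\beta)=\tfrac12$, because $\Phi_3(\beta)$ takes values in $\{0,\tfrac12,1\}$.
\end{itemize}
For instance, $K_3(\beta)=1$ iff every attack has $\|\mathbf{k}_\gamma^\beta\|_3=0$ or $\|\gamma\|_3=0$, and $K_3(\beta)=0$ iff some attack has both values equal to $1$. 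The other three parts are handled analogously.

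Third, compare the two sides using the trifurcation. Since $\otimes$ is zero-divisor-free, Theorem~\ref{thm:cfoe_trifurcation_hqaf} gives three cases:
\begin{itemize}
\item $\|\beta\|=f_\beta=1$ iff (A), (N), (D), (E) hold. This holds iff $\Phi_3(\beta)=1$, and it is also equivalent to $\|\beta\|_3=1$.
\item $\|\beta\|=0$ iff the rejection condition holds. This holds iff $\Phi_3(\beta)=0$, and it is also equivalent to $\|\beta\|_3=0$.
\item In the remaining case $0<\|\beta\|<1$, so $\|\beta\|_3=\tfrac12$. Neither condition holds, so $\Phi_3(\beta)=\tfrac12$ as well.
\end{itemize}
Hence $\|\cdot\|_3$ solves $eq_3^{HQ}$. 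Corollary~\ref{thm:3val_equivalence_hqaf} then yields that $\|\cdot\|_3$ is an adjacent complete labelling.

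The main obstacle I expect is a hidden assumption rather than a computation. Theorem~\ref{thm:cfoe_trifurcation_hqaf} uses $N^*(x)=1\iff x=0$ and $N^*(x)=0\iff x=1$, which is true for strict negations such as the standard one. I will state explicitly that the continuous negation is assumed to satisfy this, as in the earlier results, and otherwise simply invoke the trifurcation.
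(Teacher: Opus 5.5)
Your proposal is correct and follows essentially the same route as the paper. You apply the trifurcation theorem to the fuzzy model, note that the acceptance and rejection conditions test only exact values $0$ and $1$ (which $T_3$ preserves), and conclude case by case; your explicit two-sided characterization of $\min\{K_3(\beta),N_3(\beta),D_3(\beta),E_3(\beta)\}$ also makes the paper's terse Case~3 rigorous.

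Your caveat about the negation is well founded and not cosmetic. The paper uses $N^*(x)=1\iff x=0$ and $N^*(x)=0\iff x=1$ only implicitly, through the trifurcation theorem, and the statement fails without it. Take the continuous but non-strict negation $N^*(x)=\min\{1,2-2x\}$ with the Product t-norm, and an all-prima-facie HQAF in which $x$ attacks itself, $x$ attacks $\beta$ via $k$, and $x$ attacks $k$. This yields $\|x\|=\|k\|=\tfrac23$ and $\|\beta\|=N^*(\tfrac49)=1$, whereas $K_3(\beta)=\tfrac12$.
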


\begin{proof}
	Assume $\|\cdot\| \vDash_{CFNE} \mathcal{F}$. Then for every $\beta \in \mathbb{U}$, we have $\|\beta\| = f_\beta$ with $f_\beta$ as in Definition~\ref{def:cfoe_system_hqaf}. Let $\|\cdot\|_3 = T_3(\|\cdot\|)$. We show that for every $\beta \in \mathbb{U}$,
	\[
	\|\beta\|_3 = \min\left\{ K_3(\beta),\, N_3(\beta),\, D_3(\beta),\, E_3(\beta) \right\}.
	\]
	We apply Theorem~\ref{thm:cfoe_trifurcation_hqaf} to the value $\|\beta\|$ and consider three cases.
	
	\paragraph{Case 1: $\|\beta\| = 1$}
	By Theorem~\ref{thm:cfoe_trifurcation_hqaf}, conditions (A), (N), (D) and (E) all hold under $\|\cdot\|$.
	Condition (A) means that for every attack $\mathbf{k}_{\gamma}^{\beta}$, either $\left\| \mathbf{k}_{\gamma}^{\beta} \right\| = 0$ or $\left\| \gamma \right\| = 0$. In ternarized terms, this implies $\left\| \mathbf{k}_{\gamma}^{\beta} \right\|_3 = 0$ or $\left\| \gamma \right\|_3 = 0$, so $\max\left\{1-\left\| \mathbf{k}_{\gamma}^{\beta} \right\|_3, 1-\left\| \gamma \right\|_3\right\} = 1$ for every attack. Hence $K_3(\beta) = 1$.
	Similarly, condition (N) gives $N_3(\beta) = 1$, condition (D) gives $D_3(\beta) = 1$, and condition (E) gives $E_3(\beta) = 1$.
	Thus $\min\left\{ K_3(\beta), N_3(\beta), D_3(\beta), E_3(\beta) \right\} = 1 = \|\beta\|_3$.
	
	\paragraph{Case 2: $\|\beta\| = 0$}
	By Theorem~\ref{thm:cfoe_trifurcation_hqaf}, at least one of the rejection conditions holds under $\|\cdot\|$.
	If there exists an effective attack, then for that attack $\left\| \mathbf{k}_{\gamma}^{\beta} \right\| = 1$ and $\left\| \gamma \right\| = 1$, so $\left\| \mathbf{k}_{\gamma}^{\beta} \right\|_3 = 1$ and $\left\| \gamma \right\|_3 = 1$, hence $\max\left\{1-\left\| \mathbf{k}_{\gamma}^{\beta} \right\|_3, 1-\left\| \gamma \right\|_3\right\} = 0$, and therefore $K_3(\beta) = 0$.
	If there exists an unsatisfied necessary support, then $N_3(\beta) = 0$ by the same reasoning.
	Similarly, if there exists an unsatisfied deductive support, then $D_3(\beta) = 0$.
	If all evidential supports are ineffective, then for every evidential support $\left\| \mathbf{e}_{\varepsilon}^{\beta} \right\| = 0$ or $\left\| \varepsilon \right\| = 0$, so $\min\left\{\left\| \mathbf{e}_{\varepsilon}^{\beta} \right\|_3, \left\| \varepsilon \right\|_3\right\} = 0$ for every support, hence $E_3(\beta) = 0$.
	In all subcases, $\min\left\{ K_3(\beta), N_3(\beta), D_3(\beta), E_3(\beta) \right\} = 0 = \|\beta\|_3$.
	
	\paragraph{Case 3: $0 < \|\beta\| < 1$}
	Then neither the acceptance condition nor the rejection condition holds under $\|\cdot\|$, by Theorem~\ref{thm:cfoe_trifurcation_hqaf}. Therefore, neither condition holds under $\|\cdot\|_3$ either. Thus, from Definition \ref{adjcom} and Corollary \ref{thm:3val_equivalence_hqaf}, the right-hand side of the 3-valued equation for $\beta$ evaluates to $\frac{1}{2}$ under this $\|\cdot\|_3$. Hence the equality holds, since $\|\beta\|_3 = \frac{1}{2}$ as well.
	
	\medskip
	Thus in all cases, the equation holds for every $\beta \in \mathbb{U}$. The auxiliary elements have fixed values satisfying the system trivially. Therefore $T_3(\|\cdot\|)$ is a solution of $eq_3^{HQ}$. By Corollary~\ref{thm:3val_equivalence_hqaf}, it is an adjacent complete labelling of $\mathcal{F}$.
\end{proof}

\subsubsection{Semantic Correspondence under $\frac{1}{2}$-Idempotent t-Norms}

Let $\mathcal{F} = (\mathbf{A}, \mathbf{R}_k, \mathbf{R}_n, \mathbf{R}_d, \mathbf{R}_e, \mathbf{U}, \mathbf{P})$ be an HQAF. For each $\beta \in \mathbb{U}$, let $\mathcal{K}_\beta$ be the set of attacks against $\beta$, $\mathcal{N}_\beta$ the set of necessary supports for $\beta$, $\mathcal{D}_\beta$ the set of deductive supports from $\beta$, and $\mathcal{E}_\beta$ the set of evidential supports for $\beta$. Let $\odot$ be a continuous $\frac{1}{2}$-idempotent t-norm (i.e., $\frac{1}{2} \odot \frac{1}{2} = \frac{1}{2}$) and let $N$ be the standard negation $N(x) = 1-x$. We consider assignments $\|\cdot\|: \mathbf{U} \to \{0, \frac{1}{2}, 1\}$.

For $\beta \in \mathbb{U}$, define the following quantities:
\begin{align*}
	K_\beta &\stackrel{\text{def}}{=} \bigodot_{\mathbf{k}_{\gamma}^{\beta} \in \mathcal{K}_\beta} N\left( \left\| \mathbf{k}_{\gamma}^{\beta} \right\| \odot \left\| \gamma \right\| \right), \\
	N_\beta &\stackrel{\text{def}}{=} \bigodot_{\mathbf{n}_{\delta}^{\beta} \in \mathcal{N}_\beta} N\left( \left\| \mathbf{n}_{\delta}^{\beta} \right\| \odot N\left( \left\| \delta \right\| \right) \right), \\
	D_\beta &\stackrel{\text{def}}{=} \bigodot_{\mathbf{d}_{\beta}^{\theta} \in \mathcal{D}_\beta} N\left( \left\| \mathbf{d}_{\beta}^{\theta} \right\| \odot N\left( \left\| \theta \right\| \right) \right), \\
	E_\beta &\stackrel{\text{def}}{=} N\left( \bigodot_{\mathbf{e}_{\varepsilon}^{\beta} \in \mathcal{E}_\beta} N\left( \left\| \mathbf{e}_{\varepsilon}^{\beta} \right\| \odot \left\| \varepsilon \right\| \right) \right).
\end{align*}

Ignoring the fixed auxiliary‑element assignments (which are trivially satisfied), the CFNE semantics induced by the normal encoding and a $\mathcal{PL}_{[0,1]}$ equipped with the standard negation and the continuous $\frac{1}{2}$-idempotent t-norm is equivalent to the equational system
\[
\|\beta\| = K_\beta \odot N_\beta \odot D_\beta \odot E_\beta \quad \text{for every } \beta \in \mathbb{U}. \tag{$\ast$}
\]

Employing the notations introduced above, the adjacent complete semantics is given by:
\[
\|\beta\| =
\begin{cases}
	1, & \text{iff } 
	\begin{aligned}[t]
		&(\forall \mathbf{k}_{\gamma}^{\beta} \in \mathcal{K}_\beta,\; \|\gamma\| = 0 \text{ or } \|\mathbf{k}_{\gamma}^{\beta}\| = 0) \text{ and } \\
		&(\forall \mathbf{n}_{\delta}^{\beta} \in \mathcal{N}_\beta,\; \|\delta\| = 1 \text{ or } \|\mathbf{n}_{\delta}^{\beta}\| = 0) \text{ and } \\
		&(\forall \mathbf{d}_{\beta}^{\theta} \in \mathcal{D}_\beta,\; \|\theta\| = 1 \text{ or } \|\mathbf{d}_{\beta}^{\theta}\| = 0) \text{ and } \\
		&(\exists \mathbf{e}_{\varepsilon}^{\beta} \in \mathcal{E}_\beta,\; \|\varepsilon\| = 1 \text{ and } \|\mathbf{e}_{\varepsilon}^{\beta}\| = 1)
	\end{aligned}
	\\[2ex]
	0, & \text{iff } 
	\begin{aligned}[t]
		&(\exists \mathbf{k}_{\gamma}^{\beta} \in \mathcal{K}_\beta,\; \|\gamma\| = 1 \text{ and } \|\mathbf{k}_{\gamma}^{\beta}\| = 1) \text{ or } \\
		&(\exists \mathbf{n}_{\delta}^{\beta} \in \mathcal{N}_\beta,\; \|\delta\| = 0 \text{ and } \|\mathbf{n}_{\delta}^{\beta}\| = 1) \text{ or } \\
		&(\exists \mathbf{d}_{\beta}^{\theta} \in \mathcal{D}_\beta,\; \|\theta\| = 0 \text{ and } \|\mathbf{d}_{\beta}^{\theta}\| = 1) \text{ or } \\
		&(\forall \mathbf{e}_{\varepsilon}^{\beta} \in \mathcal{E}_\beta,\; \|\varepsilon\| = 0 \text{ or } \|\mathbf{e}_{\varepsilon}^{\beta}\| = 0)
	\end{aligned}
	\\[2ex]
	\frac{1}{2}, & \text{otherwise.}
\end{cases}\tag{$\ast\ast$}
\]

We now prove that every 3‑valued assignment satisfying three equivalences ($\ast\ast$) also satisfies Equation ($\ast$).

\begin{thm}\label{thm:3val_is_cfne_hqaf}
	Let the CFNE semantics be induced by the normal encoding and a propositional logic $\mathcal{PL}_{[0,1]}$ equipped with a continuous negation and a continuous $\frac{1}{2}$-idempotent t‑norm. For every HQAF $\mathcal{F}$ and every $\|\cdot\|: \mathbf{U} \to \{0, \frac{1}{2}, 1\}$,
	\[
	\|\cdot\| \text{ is an adjacent complete labelling} \implies \|\cdot\| \vDash_{CFNE} \mathcal{F}.
	\]
\end{thm}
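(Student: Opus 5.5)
The plan is to reduce the fuzzy equation ($\ast$) to the three-valued equational system $eq_3^{HQ}$ of Definition~\ref{def:3val_equational_hqaf}, and then use Corollary~\ref{thm:3val_equivalence_hqaf}, which identifies the solutions of $eq_3^{HQ}$ with the adjacent complete labellings. I will take the negation to be the standard one $N(x)=1-x$, as fixed in the setup preceding ($\ast$). The argument needs $N$ to map $\{0,\frac12,1\}$ into itself, which in particular requires $N(\tfrac12)=\tfrac12$. This does not hold for an arbitrary continuous negation, so the proof relies on the standard-negation assumption.

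The key lemma is that $\odot$ coincides with $\min$ on $\{0,\frac12,1\}$. For any t-norm we have $0\odot x=0=\min\{0,x\}$ and $1\odot x=x=\min\{1,x\}$. The remaining case is $\frac12\odot\frac12$, which equals $\frac12$ by the $\frac12$-idempotence hypothesis. So the restriction of $\odot$ to $\{0,\frac12,1\}^2$ is exactly $\min$, and this three-element set is closed under $\odot$. By induction on the number of factors (using associativity), every finite iteration $\bigodot$ of arguments from $\{0,\frac12,1\}$ equals their minimum and stays in $\{0,\frac12,1\}$. The set is also closed under $N$, so every intermediate term in $K_\beta,N_\beta,D_\beta,E_\beta$ remains three-valued and the lemma applies at each step.

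Next I compare term by term with $K_3,N_3,D_3,E_3$. For each factor I have $N(a\odot b)=1-\min\{a,b\}=\max\{1-a,1-b\}$. This gives $K_\beta=K_3(\beta)$, and with $b$ replaced by $N(\|\delta\|)$ or $N(\|\theta\|)$ it gives $N_\beta=N_3(\beta)$ and $D_\beta=D_3(\beta)$. For the evidential part, De Morgan duality over $\min$ gives
\[
E_\beta=1-\min_{\mathbf{e}_{\varepsilon}^{\beta}}\bigl(1-\min\{\|\mathbf{e}_{\varepsilon}^{\beta}\|,\|\varepsilon\|\}\bigr)=\max_{\mathbf{e}_{\varepsilon}^{\beta}}\min\{\|\mathbf{e}_{\varepsilon}^{\beta}\|,\|\varepsilon\|\}=E_3(\beta).
\]
Since the outer combination $K_\beta\odot N_\beta\odot D_\beta\odot E_\beta$ is again $\min$ on three-valued inputs, the right-hand side of ($\ast$) equals $\min\{K_3(\beta),N_3(\beta),D_3(\beta),E_3(\beta)\}$. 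Therefore, on three-valued assignments, ($\ast$) and $eq_3^{HQ}$ are the same system.

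Now let $\|\cdot\|$ be an adjacent complete labelling. By Corollary~\ref{thm:3val_equivalence_hqaf} it solves $eq_3^{HQ}$, hence it satisfies ($\ast$) for every $\beta\in\mathbb{U}$. The auxiliary-element constraints are common to both systems, so they hold automatically. Finally, Theorem~\ref{thm:cfoe_cfne_equivalence_hqaf}, instantiated with $\otimes=\odot$ and $N^*=N$, converts the CFOE solution into a CFNE model, i.e.\ $\|\cdot\|\vDash_{CFNE}\mathcal{F}$.

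I expect the main obstacle to be the first lemma rather than the bookkeeping afterwards: one must check that $\odot$ agrees with $\min$ on every pair from $\{0,\frac12,1\}$ and on every iterated product, and that these values never leave the three-element set. Once that is established, the rest is a direct substitution.
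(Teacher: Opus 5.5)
Your argument is correct, but it is organized differently from the paper's. The paper argues directly against the labelling conditions ($\ast\ast$). It notes that a finite $\odot$-product of values in $\{0,\tfrac12,1\}$ is $1$ iff every factor is $1$, is $0$ iff some factor is $0$, and is $\tfrac12$ otherwise. It then runs a three-case analysis proving $\|\beta\|=0\iff K_\beta\odot N_\beta\odot D_\beta\odot E_\beta=0$ and $\|\beta\|=1\iff K_\beta\odot N_\beta\odot D_\beta\odot E_\beta=1$, and obtains the value $\tfrac12$ by elimination.

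You package the same fact as ``$\odot$ is $\min$ on $\{0,\tfrac12,1\}$''. You observe that ($\ast$) then coincides term by term with $eq_3^{HQ}$, and conclude via Corollary~\ref{thm:3val_equivalence_hqaf} and Theorem~\ref{thm:cfoe_cfne_equivalence_hqaf}. This avoids repeating the case analysis already done in Theorem~\ref{thm:3val_encoded_equivalence_hqaf}. It also yields more than the statement: on three-valued assignments the CFNE models are \emph{exactly} the adjacent complete labellings. So the reverse inclusion, which the paper gets separately from Theorem~\ref{thm:ternarization_cfne_hqaf}, comes for free. In exchange, the paper's proof is self-contained and does not lean on Theorems~\ref{thm:3val_encoded_equivalence_hqaf} and~\ref{thm:3eq_encoded_equivalence_hqaf}.

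Your caveat about the negation is warranted, not cosmetic. The paper's proof also tacitly uses $N(x)=1-x$, since it needs $K_\beta,N_\beta,D_\beta,E_\beta$ to stay in $\{0,\tfrac12,1\}$. The statement in fact fails for a general continuous negation. Take a prima-facie argument $a$ with a prima-facie self-attack. The unique adjacent complete labelling has $\|a\|=\tfrac12$, while the CFNE equation for $a$ reduces to $\|a\|=N(\|a\|)$. When $N(x)=1-x^2$, the value $\tfrac12$ does not satisfy this equation.
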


\begin{proof}
	Fix $\beta \in \mathbb{U}$. We show the equivalence $\|\beta\| = K_\beta \odot N_\beta \odot D_\beta \odot E_\beta$ under any adjacent complete labelling $\|\cdot\|$ by proving the three cases. We rely on the following fact: since all values lie in $\{0, \frac{1}{2}, 1\}$ and $\odot$ is $\frac{1}{2}$-idempotent, the $\odot$-product of any finite number of such values is:
	\begin{itemize}
		\item $1$ if and only if every factor is $1$;
		\item $0$ if and only if at least one factor is $0$;
		\item $\frac{1}{2}$ otherwise (no factor equals $0$, but at least one factor equals $\frac{1}{2}$).
	\end{itemize}
	Consequently, for any $x,y$, we have $x \odot y = 0 \iff x=0$ or $y=0$, and $x \odot y = 1 \iff x=1$ and $y=1$.
	
	\paragraph{Case 1: $\|\beta\| = 0$}
	We prove the equivalence:
	\[
	\|\beta\| = 0 \iff K_\beta \odot N_\beta \odot D_\beta \odot E_\beta = 0.
	\]
	
	First, assume $\|\beta\| = 0$. By ($\ast\ast$), either (i) there exists an effective attack, or (ii) there exists an unsatisfied necessary support, or (iii) there exists an unsatisfied deductive support, or (iv) all evidential supports are ineffective.
	
	(i) If there exists $\mathbf{k}_{\gamma}^{\beta}$ with $\left\| \mathbf{k}_{\gamma}^{\beta} \right\| = 1$ and $\|\gamma\| = 1$, then
	\[
	\left\| \mathbf{k}_{\gamma}^{\beta} \right\| \odot \left\| \gamma \right\| = 1,
	\]
	so $N\left( \left\| \mathbf{k}_{\gamma}^{\beta} \right\| \odot \left\| \gamma \right\| \right) = 0$. Hence $K_\beta$ contains a factor $0$, so $K_\beta = 0$, and therefore $K_\beta \odot N_\beta \odot D_\beta \odot E_\beta = 0$.
	
	(ii) If there exists an unsatisfied necessary support $\mathbf{n}_{\delta}^{\beta}$, then $\left\| \mathbf{n}_{\delta}^{\beta} \right\| = 1$ and $\|\delta\| = 0$, so $N(\|\delta\|) = 1$, and
	\[
	\left\| \mathbf{n}_{\delta}^{\beta} \right\| \odot N\left( \left\| \delta \right\| \right) = 1,
	\]
	so $N\left( \left\| \mathbf{n}_{\delta}^{\beta} \right\| \odot N\left( \left\| \delta \right\| \right) \right) = 0$. Hence $N_\beta = 0$, and the whole product is $0$.
	
	(iii) If there exists an unsatisfied deductive support $\mathbf{d}_{\beta}^{\theta}$, then similarly $D_\beta = 0$, and the product is $0$.
	
	(iv) If all evidential supports are ineffective, then for each $\mathbf{e}_{\varepsilon}^{\beta}$,
	\[
	\left\| \mathbf{e}_{\varepsilon}^{\beta} \right\| \odot \left\| \varepsilon \right\| = 0,
	\]
	so $N\left( \left\| \mathbf{e}_{\varepsilon}^{\beta} \right\| \odot \left\| \varepsilon \right\| \right) = 1$ for every support. Thus the inner product $\bigodot_{\mathbf{e} \in \mathcal{E}_\beta} N(\cdots)$ is a product of $1$’s, hence equals $1$. Therefore $E_\beta = N(1) = 0$, and the whole product is $0$.
	
	Conversely, assume $K_\beta \odot N_\beta \odot D_\beta \odot E_\beta = 0$. Since $x \odot y = 0$ iff $x=0$ or $y=0$, we have $K_\beta = 0$ or $N_\beta = 0$ or $D_\beta = 0$ or $E_\beta = 0$.
	
	If $K_\beta = 0$, then there exists an attack such that
	\[
	N\left( \left\| \mathbf{k}_{\gamma}^{\beta} \right\| \odot \left\| \gamma \right\| \right) = 0,
	\]
	which implies $\left\| \mathbf{k}_{\gamma}^{\beta} \right\| \odot \left\| \gamma \right\| = 1$. This is equivalent to $\left\| \mathbf{k}_{\gamma}^{\beta} \right\| = 1$ and $\|\gamma\| = 1$, i.e., an effective attack exists.
	
	If $N_\beta = 0$, then there exists a necessary support such that
	\[
	N\left( \left\| \mathbf{n}_{\delta}^{\beta} \right\| \odot N\left( \left\| \delta \right\| \right) \right) = 0,
	\]
	so $\left\| \mathbf{n}_{\delta}^{\beta} \right\| \odot N\left( \left\| \delta \right\| \right) = 1$. This means $\left\| \mathbf{n}_{\delta}^{\beta} \right\| = 1$ and $N(\|\delta\|) = 1$, i.e., $\|\delta\| = 0$, so an unsatisfied necessary support exists.
	
	If $D_\beta = 0$, similarly, an unsatisfied deductive support exists.
	
	If $E_\beta = 0$, then
	\[
	\bigodot_{\mathbf{e}_{\varepsilon}^{\beta} \in \mathcal{E}_\beta} N\left( \left\| \mathbf{e}_{\varepsilon}^{\beta} \right\| \odot \left\| \varepsilon \right\| \right) = 1,
	\]
	which implies that every factor is $1$, i.e., for each support $\mathbf{e}_{\varepsilon}^{\beta}$, $N\left( \left\| \mathbf{e}_{\varepsilon}^{\beta} \right\| \odot \left\| \varepsilon \right\| \right) = 1$, so $\left\| \mathbf{e}_{\varepsilon}^{\beta} \right\| \odot \left\| \varepsilon \right\| = 0$. This means either $\left\| \mathbf{e}_{\varepsilon}^{\beta} \right\| = 0$ or $\|\varepsilon\| = 0$, i.e., every support is ineffective.
	
	Thus in every case, the condition in ($\ast\ast$) for $\|\beta\| = 0$ holds, so $\|\beta\| = 0$.
	
	Therefore, $\|\beta\| = 0 \iff K_\beta \odot N_\beta \odot D_\beta \odot E_\beta = 0$.
	
	\paragraph{Case 2: $\|\beta\| = 1$}
	We prove the equivalence:
	\[
	\|\beta\| = 1 \iff K_\beta \odot N_\beta \odot D_\beta \odot E_\beta = 1.
	\]
	
	Assume $\|\beta\| = 1$. By ($\ast\ast$), every attack is blocked, every necessary support is satisfied, every deductive support is satisfied, and there exists an effective evidential support.
	
	If every attack is blocked, then for each $\mathbf{k}_{\gamma}^{\beta}$,
	\[
	\left\| \mathbf{k}_{\gamma}^{\beta} \right\| \odot \left\| \gamma \right\| = 0,
	\]
	so $N(0) = 1$. Hence every factor in $K_\beta$ is $1$, so $K_\beta = 1$.
	
	If every necessary support is satisfied, then for each $\mathbf{n}_{\delta}^{\beta}$,
	\[
	\left\| \mathbf{n}_{\delta}^{\beta} \right\| \odot N\left( \left\| \delta \right\| \right) = 0,
	\]
	so $N(0) = 1$. Hence every factor in $N_\beta$ is $1$, so $N_\beta = 1$.
	
	If every deductive support is satisfied, then similarly $D_\beta = 1$.
	
	If there exists an effective evidential support, then for that $\mathbf{e}_{\varepsilon}^{\beta}$,
	\[
	\left\| \mathbf{e}_{\varepsilon}^{\beta} \right\| \odot \left\| \varepsilon \right\| = 1,
	\]
	so $N(1) = 0$. Thus the inner product over supports contains a factor $0$, hence equals $0$. Therefore $E_\beta = N(0) = 1$.
	
	Consequently, $K_\beta \odot N_\beta \odot D_\beta \odot E_\beta = 1 \odot 1 \odot 1 \odot 1 = 1$.
	
	Conversely, assume $K_\beta \odot N_\beta \odot D_\beta \odot E_\beta = 1$. Since $x \odot y = 1$ iff $x=1$ and $y=1$, we have $K_\beta = 1$, $N_\beta = 1$, $D_\beta = 1$ and $E_\beta = 1$.
	
	$K_\beta = 1$ means that every factor in the product is $1$, i.e., for every attack, $N\left( \left\| \mathbf{k}_{\gamma}^{\beta} \right\| \odot \left\| \gamma \right\| \right) = 1$, so $\left\| \mathbf{k}_{\gamma}^{\beta} \right\| \odot \left\| \gamma \right\| = 0$. This is equivalent to either $\left\| \mathbf{k}_{\gamma}^{\beta} \right\| = 0$ or $\|\gamma\| = 0$, i.e., every attack is blocked.
	
	$N_\beta = 1$ means that every factor is $1$, i.e., for every necessary support, $N\left( \left\| \mathbf{n}_{\delta}^{\beta} \right\| \odot N(\|\delta\|) \right) = 1$, so $\left\| \mathbf{n}_{\delta}^{\beta} \right\| \odot N(\|\delta\|) = 0$. This is equivalent to either $\left\| \mathbf{n}_{\delta}^{\beta} \right\| = 0$ or $\|\delta\| = 1$, i.e., every necessary support is satisfied.
	
	$D_\beta = 1$ similarly implies every deductive support is satisfied.
	
	$E_\beta = 1$ means that
	\[
	\bigodot_{\mathbf{e} \in \mathcal{E}_\beta} N\left( \left\| \mathbf{e}_{\varepsilon}^{\beta} \right\| \odot \left\| \varepsilon \right\| \right) = 0.
	\]
	Since the product is $0$, at least one factor is $0$, i.e., there exists a support $\mathbf{e}_{\varepsilon}^{\beta}$ such that $N\left( \left\| \mathbf{e}_{\varepsilon}^{\beta} \right\| \odot \left\| \varepsilon \right\| \right) = 0$, so $\left\| \mathbf{e}_{\varepsilon}^{\beta} \right\| \odot \left\| \varepsilon \right\| = 1$. This means $\left\| \mathbf{e}_{\varepsilon}^{\beta} \right\| = 1$ and $\|\varepsilon\| = 1$, i.e., an effective support exists.
	
	Thus the conditions in ($\ast\ast$) for $\|\beta\| = 1$ hold, so $\|\beta\| = 1$.
	
	Therefore, $\|\beta\| = 1 \iff K_\beta \odot N_\beta \odot D_\beta \odot E_\beta = 1$.
	
	\paragraph{Case 3: $\|\beta\| = \frac{1}{2}$}
	By ($\ast\ast$), $\|\beta\|$ is neither $0$ nor $1$. From the equivalences proven in Cases 1 and 2, we have:
	\[
	\|\beta\| = 0 \iff K_\beta \odot N_\beta \odot D_\beta \odot E_\beta = 0, \qquad
	\|\beta\| = 1 \iff K_\beta \odot N_\beta \odot D_\beta \odot E_\beta = 1.
	\]
	Hence $K_\beta \odot N_\beta \odot D_\beta \odot E_\beta$ is neither $0$ nor $1$. Since the $\odot$-product of values from $\{0, \frac{1}{2}, 1\}$ also lies in that set (by $\frac{1}{2}$-idempotency), the only remaining value is $\frac{1}{2}$. Thus
	\[
	K_\beta \odot N_\beta \odot D_\beta \odot E_\beta = \frac{1}{2} = \|\beta\|.
	\]
	
	In all three cases, we have established $\|\beta\| = K_\beta \odot N_\beta \odot D_\beta \odot E_\beta$ under any adjacent complete labelling $\|\cdot\|$. Since $\beta$ was arbitrary, Equation ($\ast$) holds for every element of $\mathbb{U}$. The auxiliary elements $\bot$, $\top$ and the added attacks/supports have fixed values that trivially satisfy the system. Hence $\|\cdot\| \vDash_{CFNE} \mathcal{F}$.
\end{proof}

\begin{thm}\label{thm:3val_cfne_equivalence_hqaf}
	Let the CFNE semantics be induced by the normal encoding and a $\mathcal{PL}_{[0,1]}$ equipped with a continuous $\frac{1}{2}$-idempotent t-norm and a standard negation. For any HQAF $\mathcal{F}$,
	\[
	\left\{ \|\cdot\| \mid \|\cdot\| \text{ is an adjacent complete labelling of } \mathcal{F} \right\}
	=
	\left\{ T_3(\|\cdot\|) \mid \|\cdot\| \vDash_{CFNE} \mathcal{F} \right\}.
	\]
\end{thm}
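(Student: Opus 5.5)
The plan is to prove the set equality by showing the two inclusions separately. Both directions reduce to results already established: Theorem~\ref{thm:3val_is_cfne_hqaf} for ``$\subseteq$'', and Theorem~\ref{thm:ternarization_cfne_hqaf} together with Corollary~\ref{thm:3val_equivalence_hqaf} for ``$\supseteq$''.

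For ``$\subseteq$'', take an adjacent complete labelling $\|\cdot\|$ of $\mathcal{F}$. It takes values in $\{0,\frac{1}{2},1\}$, so Theorem~\ref{thm:3val_is_cfne_hqaf} applies directly (a $\frac{1}{2}$-idempotent continuous t-norm with the standard negation) and gives $\|\cdot\| \vDash_{CFNE} \mathcal{F}$. By Definition~\ref{def:ternarization_hqaf}, $T_3$ is the identity on $\{0,\frac{1}{2},1\}$-valued assignments, so $\|\cdot\| = T_3(\|\cdot\|)$. Hence $\|\cdot\|$ lies in the right-hand set. The auxiliary-element values are fixed identically on both sides and cause no trouble.

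For ``$\supseteq$'', take a CFNE model $\|\cdot\|$ with arbitrary values in $[0,1]$. I want $T_3(\|\cdot\|)$ to solve $eq_3^{HQ}$, and then Corollary~\ref{thm:3val_equivalence_hqaf} turns it into an adjacent complete labelling. The natural route is to invoke Theorem~\ref{thm:ternarization_cfne_hqaf}. That theorem, however, assumes the t-norm is zero-divisor-free, and this is the main obstacle: a continuous $\frac{1}{2}$-idempotent t-norm need not be zero-divisor-free. An ordinal sum with a Łukasiewicz-type summand on $[0,\frac{1}{2}]$ is a counterexample.

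My first attempt will be to check which parts of the trifurcation Theorem~\ref{thm:cfoe_trifurcation_hqaf} actually use zero-divisor-freeness.
\begin{itemize}
\item The facts ``$x\otimes y=1$ iff $x=y=1$'' and ``$N(x)=1$ iff $x=0$, $N(x)=0$ iff $x=1$'' hold for every t-norm with the standard negation.
\item Zero-divisor-freeness is needed only in two places: to conclude from $\|\mathbf{k}_\gamma^\beta\|\otimes\|\gamma\|=0$ that one factor is $0$, and to conclude that a zero product in $E(\beta)$ or in the outer aggregation has a zero factor.
\end{itemize}
I will then try to repair these steps directly in ternarized form. One option is to exploit $\frac{1}{2}$-idempotency: every $x\ge\frac{1}{2}$ dominates $\frac{1}{2}$, so products of values in $[\frac{1}{2},1]$ stay at least $\frac{1}{2}$. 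The difficulty is that values in $(0,\frac{1}{2})$ can still multiply to $0$.

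If this repair fails, the cleanest fix is to read the statement as also requiring zero-divisor-freeness, as the Gödel t-norm $\min$ does, since it is both $\frac{1}{2}$-idempotent and zero-divisor-free. Under that reading Theorem~\ref{thm:ternarization_cfne_hqaf} applies verbatim and the inclusion follows immediately. Failing that, I would test a small HQAF under an ordinal-sum t-norm to see whether ``$\supseteq$'' can genuinely fail, and then state the correct hypothesis.
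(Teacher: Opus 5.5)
Your $\subseteq$ direction is exactly the paper's argument: Theorem~\ref{thm:3val_is_cfne_hqaf}, plus the fact that $T_3$ fixes $\{0,\frac{1}{2},1\}$-valued labellings. For $\supseteq$ the paper takes precisely the route you call natural. It justifies that route with the sentence ``A $\frac{1}{2}$-idempotent t-norm must also be zero-divisor-free'' and then cites Theorem~\ref{thm:ternarization_cfne_hqaf}. Your ordinal sum refutes that sentence: $x\odot y=\max\{0,x+y-\frac{1}{2}\}$ on $[0,\frac{1}{2}]^2$ and $\min\{x,y\}$ elsewhere gives $\frac{1}{2}\odot\frac{1}{2}=\frac{1}{2}$ but $\frac{1}{4}\odot\frac{1}{4}=0$. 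So the obstacle you identified is a real hole in the paper's proof, not a weakness of your plan.

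What your proposal leaves open has a definite answer, and it goes against the repair. For this t-norm the $\supseteq$ inclusion is false, so no argument can close it. Take $\mathbb{A}=\{\alpha_1,\beta_1,\alpha_2,\beta_2,\delta\}$ with attacks $(\alpha_i,\beta_i)$, $(\beta_i,\alpha_i)$, $(\alpha_i,\delta)$ for $i=1,2$, no supports, and $\mathbb{P}=\mathbb{U}$.
\begin{enumerate}
\item Every attack name is an unattacked prima-facie element, so its equation gives it value $1$.
\item The two $2$-cycles only require $\|\alpha_i\|=1-\|\beta_i\|$ and $\|\beta_i\|=1-\|\alpha_i\|$, so $\|\alpha_i\|=\frac{3}{4}$, $\|\beta_i\|=\frac{1}{4}$ is admissible.
\item Then $\|\delta\|=N(\frac{3}{4})\odot N(\frac{3}{4})=\frac{1}{4}\odot\frac{1}{4}=0$.
\end{enumerate}
This is a CFOE solution, hence a CFNE model by Theorem~\ref{thm:cfoe_cfne_equivalence_hqaf}. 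Its ternarization labels $\alpha_i,\beta_i$ with $\frac{1}{2}$ and $\delta$ with $0$. However, no attacker of $\delta$ is labelled $1$, and the auxiliary evidential support from $\top$ is effective. So Definition~\ref{adjcom} forces $\delta$ to be $\frac{1}{2}$, and the ternarized model is not adjacent complete.

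The failure occurs in the iterated product inside $K(\delta)$. Your list of places needing zero-divisor-freeness omits this: the products forming $K$, $N$, $D$ also need it, not only the pair products, $E(\beta)$ and the outer aggregation. The correct theorem must additionally assume that the t-norm is zero-divisor-free. The Gödel $\min$ satisfies this, so Corollary~\ref{cor5} is unaffected. Under that hypothesis your fallback is exactly the paper's argument and is sound.
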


\begin{proof}
	A $\frac{1}{2}$-idempotent t-norm must also be zero-divisor-free. Therefore, from Theorem~\ref{thm:ternarization_cfne_hqaf}, if $\|\cdot\|$ is a model of $\mathcal{F}$ under the given CFNE semantics, then $T_3(\|\cdot\|)$ is an adjacent complete labelling of $\mathcal{F}$. Thus,
	\[
	\left\{ T_3(\|\cdot\|) \mid \|\cdot\| \vDash_{CFNE} \mathcal{F} \right\}
	\subseteq
	\left\{ \|\cdot\| \mid \|\cdot\| \text{ is an adjacent complete labelling of } \mathcal{F} \right\}.
	\]
	
	From Theorem~\ref{thm:3val_is_cfne_hqaf}, if a 3-valued labelling $\|\cdot\|$ is an adjacent complete labelling of $\mathcal{F}$, then it is a model of $\mathcal{F}$ under the given CFNE semantics. Since $\|\cdot\| = T_3(\|\cdot\|)$ for this 3-valued labelling, we have
	\[
	\left\{ \|\cdot\| \mid \|\cdot\| \text{ is an adjacent complete labelling of } \mathcal{F} \right\}
	\subseteq
	\left\{ T_3(\|\cdot\|) \mid \|\cdot\| \vDash_{CFNE} \mathcal{F} \right\}.
	\]
	
	This completes the proof.
\end{proof}
\subsection{Key Instances of Fuzzy Encoded Semantics}

Based on the unified CFOE/CFNE framework established in Section 4.3, we present three canonical instances induced by standard continuous t-norms: Gödel, Product and Łukasiewicz. Each instance corresponds to a specific aggregation pattern of attacks and supports, and the semantic equivalence between equational and encoded forms follows directly from Theorem \ref{thm:cfoe_cfne_equivalence_hqaf}. We also discuss their core algebraic properties and intuitive interpretations.

\subsubsection{Gödel Fuzzy Encoded Semantics}

The CFOE system induced by Gödel t-norm $\otimes_G = \min$ is the most natural extension of the discrete max-min equational semantics to the continuous $[0,1]$ domain.

\begin{thm}[Gödel-type CFOE System]\label{thm:goedel_cfoe_hqaf}
	Let $\mathcal{F} = (\mathbf{A}, \mathbf{R}_k, \mathbf{R}_n, \mathbf{R}_d, \mathbf{R}_e, \mathbf{U}, \mathbf{P})$ be an HQAF and $\|\cdot\|: \mathbf{U} \to [0,1]$ be an assignment. If the negation is the standard negation $N^*(x) = 1-x$ and the t-norm is the Gödel t-norm $\otimes_G = \min$, then the Gödel-type CFOE system $eq_{[0,1]}^G$ is given by the auxiliary-element conditions
	\[
	\|\bot\| = 0,\quad \|\top\| = 1,\quad \|\alpha\| = 1 \quad \big(\forall \alpha \in (\mathbf{R}_k \cup \mathbf{R}_n \cup \mathbf{R}_d \cup \mathbf{R}_e) \setminus (\mathbb{R}_k \cup \mathbb{R}_n \cup \mathbb{R}_d \cup \mathbb{R}_e)\big)
	\]
	and for each $\beta \in \mathbb{U}$, the fixed-point equation:
	\[
	\|\beta\| = \min\left\{
	\underbrace{\min_{\mathbf{k}_{\gamma}^{\beta} \in \mathbf{R}_k} \max\left\{ 1-\left\| \mathbf{k}_{\gamma}^{\beta} \right\|,\, 1-\left\| \gamma \right\| \right\}}_{K_G(\beta): \text{ attack part}},
	\right.
	\]
	\[
	\quad\quad\quad
	\underbrace{\min_{\mathbf{n}_{\delta}^{\beta} \in \mathbf{R}_n} \max\left\{ 1-\left\| \mathbf{n}_{\delta}^{\beta} \right\|,\, \left\| \delta \right\| \right\}}_{N_G(\beta): \text{ necessary support part}},
	\]
	\[
	\quad\quad\quad
	\left.
	\underbrace{\min_{\mathbf{d}_{\beta}^{\theta} \in \mathbf{R}_d} \max\left\{ 1-\left\| \mathbf{d}_{\beta}^{\theta} \right\|,\, \left\| \theta \right\| \right\}}_{D_G(\beta): \text{ deductive support part}},
	\underbrace{\max_{\mathbf{e}_{\varepsilon}^{\beta} \in \mathbf{R}_e} \min\left\{ \left\| \mathbf{e}_{\varepsilon}^{\beta} \right\|,\, \left\| \varepsilon \right\| \right\}}_{E_G(\beta): \text{ evidential support part}}
	\right\}.
	\]
\end{thm}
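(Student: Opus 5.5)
The plan is to specialize the general CFOE fixed-point equation of Definition~\ref{def:cfoe_system_hqaf} to $N^*(x)=1-x$ and $\otimes=\min$, and simplify each of the four factors of $f_\beta$ separately. The auxiliary-element conditions are copied verbatim from Definition~\ref{def:cfoe_system_hqaf}, so only the equation for each $\beta\in\mathbb{U}$ needs attention. Equivalence with the Gödel CFNE semantics then follows from Theorem~\ref{thm:cfoe_cfne_equivalence_hqaf}.

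\textbf{Algebraic facts.} I will use three elementary identities on $[0,1]$. First, the finite iteration $\bigotimes_G$ equals $\min$ over the index set. Second, the De Morgan identity $1-\min\{a,b\}=\max\{1-a,1-b\}$. Third, the induced t-conorm is $N^*(N^*(x)\otimes_G N^*(y))=1-\min\{1-x,1-y\}=\max\{x,y\}$, and this extends by induction to finite iterations: $1-\min_i(1-a_i)=\max_i a_i$.

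\textbf{The four factors.} Each is a short computation.
\begin{itemize}
\item Attack part: $N^*(\|\mathbf{k}_\gamma^\beta\|\otimes_G\|\gamma\|)=\max\{1-\|\mathbf{k}_\gamma^\beta\|,1-\|\gamma\|\}$, so $K(\beta)=K_G(\beta)$.
\item Necessary support part: $1-\min\{\|\mathbf{n}_\delta^\beta\|,1-\|\delta\|\}=\max\{1-\|\mathbf{n}_\delta^\beta\|,\|\delta\|\}$, so $N(\beta)=N_G(\beta)$.
\item Deductive support part: the same computation with $\theta$ in place of $\delta$ gives $D(\beta)=D_G(\beta)$.
\item Evidential part: $1-\min_{\mathbf{e}}(1-\min\{\|\mathbf{e}_\varepsilon^\beta\|,\|\varepsilon\|\})=\max_{\mathbf{e}}\min\{\|\mathbf{e}_\varepsilon^\beta\|,\|\varepsilon\|\}$, so $E(\beta)=E_G(\beta)$.
\end{itemize}
The outer $\otimes_G$ that combines the four factors is $\min$, which yields exactly the displayed equation.

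\textbf{Main obstacle.} There is no real difficulty here. The only point needing care is the empty index sets. An empty $\min$ is the t-norm unit $1$, and an empty iterated t-norm inside the evidential part gives $N^*(1)=0$, matching an empty $\max$ of $0$. However, Definition~\ref{HQAF} guarantees that every $\beta\in\mathbb{U}$ has at least one attacker (possibly $\bot$), at least one necessary supporter (possibly $\top$), at least one deductive target (possibly $\top$), and at least one evidential supporter. So all index sets are nonempty and the conventions agree in any case.

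Finally, I will note that on $\{0,\tfrac12,1\}$ this equation coincides with $eq_3^{HQ}$ of Definition~\ref{def:3val_equational_hqaf}, which confirms that the Gödel system extends the discrete max–min semantics.
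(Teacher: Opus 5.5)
Your proposal is correct and takes the same approach as the paper, which simply specializes Definition~\ref{def:cfoe_system_hqaf} to $N^*(x)=1-x$ and $\otimes=\min$. You write out the De Morgan and t-conorm computations that the paper leaves implicit, and you also note that the index sets are nonempty, as guaranteed by Definition~\ref{HQAF}.
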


\begin{proof}
	The result follows immediately from Definition \ref{def:cfoe_system_hqaf} by specifying the continuous negation as the standard negation $N^*(x) = 1-x$ and the Gödel t-norm $\otimes_G = \min$.
\end{proof}

Intuitively, the attack part $K_G(\beta)$ measures the degree to which all attacks against $\beta$ fail. For each attack, its failure degree equals the maximum failure degree among the attack name and the source element of that attack; the overall attack failure degree is the minimum of these values, i.e., determined by the single most effective attack against $\beta$.

The necessary support part $N_G(\beta)$ measures the degree to which all necessary supports targeting $\beta$ are satisfied. For each necessary support, its satisfaction degree equals the maximum of the invalidity degree of the support name and the validity degree of its source argument; the overall satisfaction degree of necessary supports is the minimum of these values, reflecting the intuition that the acceptability of $\beta$ is constrained by the least satisfied necessary support.

The deductive support part $D_G(\beta)$ measures the degree to which all deductive supports originating from $\beta$ are satisfied. For each deductive support, its satisfaction degree equals the maximum of the invalidity degree of the support name and the validity degree of its target argument; the overall satisfaction degree of deductive supports is the minimum of these values, capturing the requirement that \(\beta\) can be fully accepted only if every deductive inference originating from \(\beta\) is satisfied.

The evidential support part $E_G(\beta)$ measures the degree to which at least one evidential support for $\beta$ succeeds. For each evidential support, its success degree equals the minimum success degree among the support name and its source argument; the overall evidential support success degree is the maximum of these values, i.e., governed by the strongest evidential support available for $\beta$.

The final acceptability degree of $\beta$ is the truth degree of the conjunction of all four conditions, aggregated by the Gödel t-norm (minimum). This formulation naturally extends the discrete max-min equational semantics to the continuous unit interval, preserving the classical weakest-link principle for conjunctive constraints and the strongest-link principle for disjunctive evidential support.

\begin{thm}[Equivalence of Gödel CFOE and CFNE Semantics]
	For every HQAF $\mathcal{F}$, the Gödel-type continuous fuzzy operator-based equational semantics coincides with the Gödel fuzzy normal encoded semantics, i.e.,
	\[
	\mathfrak{LS}_{Eq_{[0,1]}^G}(\mathcal{F}) = \mathfrak{LS}_{ec_n}^{\mathcal{PL}_{[0,1]}^G}(\mathcal{F}).
	\]
\end{thm}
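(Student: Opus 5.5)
The plan is to obtain this as a direct specialization of Theorem~\ref{thm:cfoe_cfne_equivalence_hqaf}, combined with Theorem~\ref{thm:goedel_cfoe_hqaf}. First I will fix the logic $\mathcal{PL}_{[0,1]}^G$. It has standard negation $N^*(x)=1-x$, which is continuous, and Gödel t-norm $\otimes_G=\min$, which is a continuous t-norm, together with its residuum $I_G$. So $\mathcal{PL}_{[0,1]}^G$ is an instance of $\mathcal{PL}_{[0,1]}^{*,\otimes}$ in the sense of Definition~\ref{def:cfne_semantics_hqaf}, and the CFNE semantics induced by it is exactly $\mathfrak{LS}_{ec_n}^{\mathcal{PL}_{[0,1]}^G}$.

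Next, Theorem~\ref{thm:cfoe_cfne_equivalence_hqaf} with $N^*=1-x$ and $\otimes=\min$ gives $\mathfrak{LS}_{Eq_{[0,1]}^{*,\min}}(\mathcal{F})=\mathfrak{LS}_{ec_n}^{\mathcal{PL}_{[0,1]}^G}(\mathcal{F})$. It then remains to identify the CFOE system $eq_{[0,1]}^{*,\min}$ with the Gödel-type system $eq_{[0,1]}^G$ of Theorem~\ref{thm:goedel_cfoe_hqaf}. This is a term-by-term rewriting of the four factors:
\begin{itemize}
\item attack part: $N^*(a\otimes b)=1-\min\{a,b\}=\max\{1-a,1-b\}$;
\item necessary and deductive parts: $1-\min\{a,1-c\}=\max\{1-a,c\}$;
\item evidential part: the iterated $\min$ becomes $\min$ over the index set, and by De Morgan $1-\min_i(1-\min\{e_i,\varepsilon_i\})=\max_i\min\{e_i,\varepsilon_i\}$.
\end{itemize}
Since the auxiliary-element conditions are identical in the two systems, their solution sets coincide. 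Hence $\mathfrak{LS}_{Eq_{[0,1]}^G}(\mathcal{F})=\mathfrak{LS}_{Eq_{[0,1]}^{*,\min}}(\mathcal{F})$, and chaining the two equalities proves the claim.

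The main (minor) obstacle is bookkeeping at the edges. Empty conjunctions such as $\mathcal{K}_\beta=\emptyset$ cannot occur, because the auxiliary elements in Definition~\ref{HQAF} guarantee every $\beta\in\mathbb{U}$ has at least one attack, necessary support, deductive support and evidential support. The only other point is the conventions for $\min$/$\max$ over these nonempty finite sets, which makes the identification exact. I would also check that the equivalence connective $\leftrightarrow$, interpreted via $I_G$, satisfies $\|a\leftrightarrow b\|=1\iff\|a\|=\|b\|$. This is the residuation fact recalled in the preliminaries and already used inside Theorem~\ref{thm:cfoe_cfne_equivalence_hqaf}.
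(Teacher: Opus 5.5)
Your proposal is correct and follows the paper's proof. The paper likewise obtains the result by specializing Theorem~\ref{thm:cfoe_cfne_equivalence_hqaf} to the standard negation $N^*(x)=1-x$ and the Gödel t-norm $\otimes_G=\min$; your term-by-term rewriting of the four factors just spells out the identification already recorded in Theorem~\ref{thm:goedel_cfoe_hqaf}.
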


\begin{proof}
	The result follows immediately from Theorem \ref{thm:cfoe_cfne_equivalence_hqaf} by specifying the continuous negation as the standard negation $N^*(x)=1-x$ and the t-norm as the Gödel t-norm $\otimes_G = \min$.
\end{proof}

As a special case of the general CFNE framework, the Gödel fuzzy encoded semantics inherits all core algebraic properties established in Section \ref{4.3.4}. We restate them below for the Gödel instance for completeness:
\begin{enumerate}
	\item \textbf{Continuity}. The aggregation function $f_\beta$ for each element $\beta$ is continuous on $[0,1]^{m_\beta}$.
	\item \textbf{Boundary conditions}. If there exists an effective attack, an unsatisfied necessary support, an unsatisfied deductive support, or all evidential supports are ineffective, then $\|\beta\| = 0$; if all attacks are ineffective, all necessary supports are satisfied, all deductive supports are satisfied, and at least one evidential support is effective, then $\|\beta\| = 1$.
	\item \textbf{Monotonicity}. The truth degree $\|\beta\|$ is non-increasing with respect to the truth value of any attack-related element, and non-decreasing with respect to the truth value of any element belonging to necessary supports, deductive supports, or evidential supports.
	\item \textbf{Existence of solutions}. The Gödel-type CFOE system for any HQAF has at least one solution, or equivalently, the model set of the Gödel CFNE semantics is non-empty.
\end{enumerate}

From Theorems \ref{thm:ternarization_cfne_hqaf}, \ref{thm:3val_is_cfne_hqaf} and \ref{thm:3val_cfne_equivalence_hqaf}, we obtain the following corollaries characterizing the relationship between the Gödel CFNE semantics and the adjacent complete labelling semantics.

\begin{cor}
	For any HQAF $F$ and any fuzzy assignment $\|\cdot\|: U \to [0,1]$,
	\[
	\|\cdot\| \vDash_{\mathfrak{LS}_{ec_n}^{\mathcal{PL}_{[0,1]}^G}} F \implies T_3(\|\cdot\|) \text{ is an adjacent complete labelling of } F.
	\]
\end{cor}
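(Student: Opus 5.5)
The plan is to obtain this corollary as a direct instance of Theorem~\ref{thm:ternarization_cfne_hqaf}. That theorem covers any $\mathcal{PL}_{[0,1]}$ equipped with a continuous negation and a continuous zero-divisor-free t-norm. So the only work is to check that the Gödel logic $\mathcal{PL}_{[0,1]}^G$ fits these hypotheses, and that its normal encoded semantics is the CFNE semantics referred to there.

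First, I verify the hypotheses. In $\mathcal{PL}_{[0,1]}^G$ the negation is the standard negation $N^*(x)=1-x$, which is continuous. The t-norm is $T_G=\min$, which is continuous on $[0,1]^2$. It is zero-divisor-free, because $\min\{x,y\}=0$ forces $x=0$ or $y=0$. Next, by Definition~\ref{def:cfne_semantics_hqaf}, $\mathfrak{LS}_{ec_n}^{\mathcal{PL}_{[0,1]}^G}$ is exactly the CFNE semantics with $N^*=1-x$ and $\otimes=\min$. The residuated implication $I_G$ satisfies $\|a\leftrightarrow b\|=1 \iff \|a\|=\|b\|$, which is the only property of the implication used in Theorem~\ref{thm:cfoe_cfne_equivalence_hqaf}. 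Hence a model $\|\cdot\|$ of the Gödel encoding satisfies $\|\beta\|=f_\beta$ for every $\beta\in\mathbb{U}$, with $f_\beta$ the Gödel instance given in Theorem~\ref{thm:goedel_cfoe_hqaf}.

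Second, I apply Theorem~\ref{thm:ternarization_cfne_hqaf}. It yields that $T_3(\|\cdot\|)$ solves $eq_3^{HQ}$. By Corollary~\ref{thm:3val_equivalence_hqaf}, $T_3(\|\cdot\|)$ is then an adjacent complete labelling of $F$. The auxiliary elements take values in $\{0,1\}$, so $T_3$ leaves them unchanged and the auxiliary requirements of Definition~\ref{def:labelling_semantics_hqaf} are preserved.

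The only step needing care is the zero-divisor-free check together with the identification of the two semantics, and both are immediate for $\min$. Alternatively, Theorem~\ref{thm:3val_cfne_equivalence_hqaf} could be invoked, since $\min$ is $\tfrac{1}{2}$-idempotent, but the single inclusion above is all the corollary needs.
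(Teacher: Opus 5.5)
Your proposal is correct and follows the paper's route exactly. The paper likewise notes only that $\min$ is zero-divisor-free and the standard negation is continuous, then invokes Theorem~\ref{thm:ternarization_cfne_hqaf}. Your extra checks (continuity of $\min$, the property $\|a\leftrightarrow b\|=1\iff\|a\|=\|b\|$ for $I_G$, and that $T_3$ fixes the auxiliary values) simply make explicit what the paper leaves implicit.
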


\begin{proof}
	Since the Gödel t-norm is zero-divisor-free and the standard negation is continuous in $\mathcal{PL}_{[0,1]}^G$, the statement follows directly from Theorem \ref{thm:ternarization_cfne_hqaf}.
\end{proof}

\begin{cor}
	For every HQAF $F$ and every 3-valued assignment $\|\cdot\|: U \to \{0, \frac{1}{2}, 1\}$,
	\[
	\|\cdot\| \text{ is an adjacent complete labelling} \implies \|\cdot\| \vDash_{\mathfrak{LS}_{ec_n}^{\mathcal{PL}_{[0,1]}^G}} F.
	\]
\end{cor}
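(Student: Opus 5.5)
The plan is to obtain the corollary as a direct specialization of Theorem~\ref{thm:3val_is_cfne_hqaf}. That theorem states that, whenever the CFNE semantics is induced by the normal encoding and a $\mathcal{PL}_{[0,1]}$ equipped with a continuous negation and a continuous $\frac{1}{2}$-idempotent t-norm, every adjacent complete labelling is a CFNE model. The proof therefore only needs to check that $\mathcal{PL}_{[0,1]}^G$ satisfies these hypotheses. Then $\mathfrak{LS}_{ec_n}^{\mathcal{PL}_{[0,1]}^G}$ is an instance of the CFNE semantics of Definition~\ref{def:cfne_semantics_hqaf}, and the implication follows.

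The verification has three steps, carried out in order.
\begin{enumerate}
	\item By the convention fixed in the preliminaries, $\mathcal{PL}_{[0,1]}^G$ uses the standard negation $N(x)=1-x$, which is continuous.
	\item The Gödel t-norm $T_G=\min$ is continuous.
	\item $T_G$ is $\frac{1}{2}$-idempotent, since $\min\{\frac{1}{2},\frac{1}{2}\}=\frac{1}{2}$. In fact it is idempotent everywhere.
\end{enumerate}
Since the implication in $\mathcal{PL}_{[0,1]}^G$ is the residuum $I_G$ of $T_G$, the logic fits Definition~\ref{def:cfne_semantics_hqaf} exactly. Consequently $\mathfrak{LS}_{ec_n}^{\mathcal{PL}_{[0,1]}^G}(F)$ coincides with the CFNE model set used in Theorem~\ref{thm:3val_is_cfne_hqaf}.

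There is one small point to make explicit. The equational system $(\ast)$ used inside Theorem~\ref{thm:3val_is_cfne_hqaf} is justified by Theorem~\ref{thm:cfoe_cfne_equivalence_hqaf} together with the Gödel specialization in Theorem~\ref{thm:goedel_cfoe_hqaf}. In particular, the disjunction over evidential supports is interpreted by the t-conorm $1-\min\{1-x,1-y\}=\max\{x,y\}$, matching $E_G(\beta)$. With this in place, applying Theorem~\ref{thm:3val_is_cfne_hqaf} to an arbitrary adjacent complete $\|\cdot\|:U\to\{0,\frac{1}{2},1\}$ yields $\|\cdot\|\vDash_{\mathfrak{LS}_{ec_n}^{\mathcal{PL}_{[0,1]}^G}}F$.

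No step is a real obstacle. The only thing needing care is confirming that the paper's $\mathcal{PL}_{[0,1]}^G$ has exactly the negation and t-norm assumed in Theorem~\ref{thm:3val_is_cfne_hqaf}, and that the auxiliary-element constraints agree, which they do by the common convention of Section~\ref{sec:encoding}. As an alternative, the statement can be read off from Theorem~\ref{thm:3val_cfne_equivalence_hqaf}, but that result only asserts that adjacent complete labellings are ternarizations of models. For a $3$-valued labelling that is itself a model, one would still invoke Theorem~\ref{thm:3val_is_cfne_hqaf} directly, so the direct route is preferable.
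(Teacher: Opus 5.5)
Your proposal is correct and follows the paper's own argument: the corollary is a direct specialization of Theorem~\ref{thm:3val_is_cfne_hqaf}, after checking that $\mathcal{PL}_{[0,1]}^G$ uses the standard (continuous) negation and the continuous, $\frac{1}{2}$-idempotent Gödel t-norm $\min$. Your extra remarks are accurate but not needed for the argument: the dual t-conorm is $\max$, and Theorem~\ref{thm:3val_cfne_equivalence_hqaf} alone would only give that a labelling is the ternarization of some model, not that it is itself a model.
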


\begin{proof}
	The Gödel t-norm is $\frac{1}{2}$-idempotent, and the standard negation is well-defined in $\mathcal{PL}_{[0,1]}^G$. Therefore, the statement follows directly from Theorem \ref{thm:3val_is_cfne_hqaf}.
\end{proof}

\begin{cor}\label{cor5}
	For any HQAF $F$,
	\[
	\left\{\|\cdot\| \mid \|\cdot\| \vDash_{\mathfrak{LS}_{ac}} \mathcal{F}\right\} = \left\{ T_3(\|\cdot\|) \mid \|\cdot\| \vDash_{\mathfrak{LS}_{ec_n}^{\mathcal{PL}_{[0,1]}^G}} \mathcal{F} \right\}.
	\]
\end{cor}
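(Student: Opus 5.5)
The plan is to obtain Corollary~\ref{cor5} as a direct specialization of Theorem~\ref{thm:3val_cfne_equivalence_hqaf}. That theorem asserts exactly the desired set equality for any CFNE semantics built from the normal encoding and a $\mathcal{PL}_{[0,1]}$ equipped with the standard negation and a continuous $\frac{1}{2}$-idempotent t-norm. So the whole task is to check that $\mathcal{PL}_{[0,1]}^G$ meets these hypotheses. We also need to note that the left-hand set $\{\|\cdot\| \mid \|\cdot\| \vDash_{\mathfrak{LS}_{ac}} \mathcal{F}\}$ is, by definition of $\mathfrak{LS}_{ac}$ in Definition~\ref{adjcom}, precisely the set of adjacent complete labellings of $\mathcal{F}$.

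The hypotheses are verified as follows. $\mathcal{PL}_{[0,1]}^G$ uses the standard negation $N(x)=1-x$ by its definition in the preliminaries. It uses the Gödel t-norm $T_G=\min$, which is continuous. It is $\frac{1}{2}$-idempotent because $\min\{\frac12,\frac12\}=\frac12$. It is in fact idempotent everywhere and zero-divisor-free, since $\min\{x,y\}=0$ forces $x=0$ or $y=0$. Hence $\mathfrak{LS}_{ec_n}^{\mathcal{PL}_{[0,1]}^G}$ is an instance of the CFNE semantics of Definition~\ref{def:cfne_semantics_hqaf} to which Theorem~\ref{thm:3val_cfne_equivalence_hqaf} applies.

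For a self-contained argument, one can also assemble the two inclusions from the two preceding corollaries. The inclusion $\supseteq$ holds because every Gödel model ternarizes to an adjacent complete labelling, by the first Gödel corollary, which rests on Theorem~\ref{thm:ternarization_cfne_hqaf}. The inclusion $\subseteq$ holds because every adjacent complete labelling $\|\cdot\|$ is itself a Gödel model, by the second corollary, which rests on Theorem~\ref{thm:3val_is_cfne_hqaf}. Since such a labelling takes values in $\{0,\frac12,1\}$, Definition~\ref{def:ternarization_hqaf} gives $T_3(\|\cdot\|)=\|\cdot\|$, so it lies in the image set.

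There is no genuine obstacle. The only point needing care is the fixed-point check $T_3(\|\cdot\|)=\|\cdot\|$ for $3$-valued labellings, together with the observation that auxiliary elements receive identical fixed values in both semantics, so they impose no extra constraint.
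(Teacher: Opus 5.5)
Your proposal is correct and follows the paper's proof. The paper likewise checks that $\mathcal{PL}_{[0,1]}^G$ has the standard continuous negation and a continuous $\frac12$-idempotent t-norm and then invokes Theorem~\ref{thm:3val_cfne_equivalence_hqaf}, whose own proof is exactly your two-inclusion assembly via $T_3(\|\cdot\|)=\|\cdot\|$ on $3$-valued labellings.

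Your explicit check that $\min$ is zero-divisor-free is worth keeping. The proof of Theorem~\ref{thm:3val_cfne_equivalence_hqaf} asserts that $\frac12$-idempotence implies zero-divisor-freeness, and this fails for general continuous t-norms: an ordinal sum with a Łukasiewicz summand on $[0,\frac12]$ gives $0.2\odot 0.2=0$. So in the Gödel case the $\supseteq$ inclusion is best justified directly through Theorem~\ref{thm:ternarization_cfne_hqaf}.
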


\begin{proof}
	The standard negation is continuous, and the Gödel t-norm is continuous, $\frac{1}{2}$-idempotent in $\mathcal{PL}_{[0,1]}^G$. Hence, the equality follows directly from Theorem \ref{thm:3val_cfne_equivalence_hqaf}.
\end{proof}
\begin{cor}[Existence of Models for Adjacent Complete Labelling Semantics]
	\label{cor:existence-ac}
	For any HQAF $\mathcal{F}$, the adjacent complete labelling semantics has at least one model, i.e.,
	\[
	\mathfrak{L}\mathfrak{S}_{ac}(\mathcal{F}) \neq \emptyset.
	\]
\end{cor}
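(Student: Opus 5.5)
The plan is to obtain a three-valued model from a fuzzy one. I will use the Gödel instance of the continuous fuzzy normal encoded semantics as an intermediary, combining Corollary~\ref{cor2} with Corollary~\ref{cor5}, so no new fixed-point argument is needed.

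First, fix an arbitrary HQAF $\mathcal{F}$. By Corollary~\ref{cor2}, which rests on Theorem~\ref{thm:solution_existence_hqaf} (Brouwer's fixed-point theorem applied to the continuous self-map of $[0,1]^n$ built from the Gödel aggregation functions $f_\beta$), the model set $\mathfrak{LS}_{ec_n}^{\mathcal{PL}_{[0,1]}^G}(\mathcal{F})$ is non-empty. Choose one model $\|\cdot\|$ from it.

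Second, apply the ternarization $T_3$ of Definition~\ref{def:ternarization_hqaf} to obtain the three-valued assignment $T_3(\|\cdot\|)$. The Gödel t-norm $\min$ is zero-divisor-free, and the standard negation is continuous. So Theorem~\ref{thm:ternarization_cfne_hqaf}, or directly the inclusion ``$\supseteq$'' in Corollary~\ref{cor5}, shows that $T_3(\|\cdot\|)$ solves $eq_3^{HQ}$. By Corollary~\ref{thm:3val_equivalence_hqaf}, it is therefore an adjacent complete labelling of $\mathcal{F}$. We must also confirm that $T_3(\|\cdot\|)$ meets the auxiliary-element requirements of Definition~\ref{def:labelling_semantics_hqaf}. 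This holds because $T_3$ fixes the values $0$ and $1$, and $\|\cdot\|$ already assigns $\|\bot\|=0$, $\|\top\|=1$ and $\|\alpha\|=1$ to auxiliary interactions. Hence $\mathfrak{LS}_{ac}(\mathcal{F})\neq\emptyset$.

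No step is a real obstacle, since the substantive work is done in Theorem~\ref{thm:solution_existence_hqaf} and Theorem~\ref{thm:ternarization_cfne_hqaf}. The one delicate point I would check is Case~3 of Theorem~\ref{thm:ternarization_cfne_hqaf}. There one needs that if neither the acceptance nor the rejection condition holds for the fuzzy values, then neither holds for the ternarized values. This is true because those conditions refer only to the values $0$ and $1$, which $T_3$ leaves unchanged. Since this is already established earlier, the corollary follows in a few lines.
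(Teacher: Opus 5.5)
Your proposal is correct and is the paper's own proof: Corollary~\ref{cor2} supplies a G\"{o}del CFNE model, and the ``$\supseteq$'' half of Corollary~\ref{cor5} (i.e.\ Theorem~\ref{thm:ternarization_cfne_hqaf}) makes its ternarization an adjacent complete labelling. You justify that step directly by the zero-divisor-freeness of $\min$. This is better than the paper's route to Corollary~\ref{cor5}, which goes through Theorem~\ref{thm:3val_cfne_equivalence_hqaf} and its claim that every $\frac{1}{2}$-idempotent t-norm is zero-divisor-free; that claim is false in general, though it holds for $\min$.
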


\begin{proof}
	By Corollary~\ref{cor2} and Corollary~\ref{cor5}, we have $\mathfrak{L}\mathfrak{S}_{ac}(\mathcal{F}) \neq \emptyset$.
\end{proof}
\subsubsection{Product Fuzzy Encoded Semantics}

The Product t-norm $\otimes_P(x, y) = x \cdot y$ corresponds to probabilistic independent aggregation: it naturally models the joint effectiveness of multiple independent attacks and supports. This semantics is suitable for argumentation scenarios where different attack sources and support sources are mutually independent.

\begin{thm}[Product-type CFOE System]
	Let $\mathcal{F}=(\mathbf{A},\mathbf{R}_k,\mathbf{R}_n,\mathbf{R}_d,\mathbf{R}_e,\mathbf{U}$, $\mathbf{P})$ be an HQAF and $\|\cdot\|:\mathbf{U}\to[0,1]$ be an assignment. If the negation is the standard negation $N^*(x)=1-x$ and the t-norm is the Product t-norm $\otimes_P(x,y)=x\cdot y$, then the Product-type CFOE system $eq_{[0,1]}^P$ is given by the auxiliary-element conditions
	\[
	\|\bot\|=0,\quad \|\top\|=1,\quad \|\alpha\|=1\quad \bigl(\forall \alpha\in(\mathbf{R}_k\cup\mathbf{R}_n\cup\mathbf{R}_d\cup\mathbf{R}_e)\setminus(\mathbb{R}_k\cup\mathbb{R}_n\cup\mathbb{R}_d\cup\mathbb{R}_e)\bigr)
	\]
	and for each $\beta\in\mathbb{U}$, the fixed-point equation:
	\[
	\begin{aligned}
		\|\beta\| = & \Biggl( \prod_{\mathbf{k}_{\gamma}^{\beta}\in \mathcal{K}_{\beta}} \bigl(1-\|\mathbf{k}_{\gamma}^{\beta}\|\cdot\|\gamma\|\bigr) \Biggr) \\
		&\cdot \Biggl( \prod_{\mathbf{n}_{\delta}^{\beta}\in \mathcal{N}_{\beta}} \bigl(1-\|\mathbf{n}_{\delta}^{\beta}\|\cdot(1-\|\delta\|)\bigr) \Biggr) \\
		&\cdot \Biggl( \prod_{\mathbf{d}_{\beta}^{\theta}\in \mathcal{D}_{\beta}} \bigl(1-\|\mathbf{d}_{\beta}^{\theta}\|\cdot(1-\|\theta\|)\bigr) \Biggr) \\
		&\cdot \Biggl( 1 - \prod_{\mathbf{e}_{\varepsilon}^{\beta}\in \mathcal{E}_{\beta}} \bigl(1-\|\mathbf{e}_{\varepsilon}^{\beta}\|\cdot\|\varepsilon\|\bigr) \Biggr),
	\end{aligned}
	\]
	where
	\[
	\mathcal{K}_{\beta}=\{\mathbf{k}_{\gamma}^{\beta}\in \mathbf{R}_k\},\quad
	\mathcal{N}_{\beta}=\{\mathbf{n}_{\delta}^{\beta}\in \mathbf{R}_n\},\quad
	\mathcal{D}_{\beta}=\{\mathbf{d}_{\beta}^{\theta}\in \mathbf{R}_d\},\quad
	\mathcal{E}_{\beta}=\{\mathbf{e}_{\varepsilon}^{\beta}\in \mathbf{R}_e\}.
	\]
\end{thm}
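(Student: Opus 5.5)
This is a direct instantiation of Definition~\ref{def:cfoe_system_hqaf}, exactly parallel to the proof of Theorem~\ref{thm:goedel_cfoe_hqaf}. I will substitute the standard negation $N^*(x)=1-x$ and the Product t-norm $\otimes_P(x,y)=x\cdot y$ into the general fixed-point equation. The auxiliary-element conditions are copied verbatim from the definition, since they do not depend on the choice of operators. So only the equation for $\beta\in\mathbb{U}$ needs checking.

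I will go through the four factors in turn.

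\textbf{Attack part.} Each factor $N^*(\|\mathbf{k}_{\gamma}^{\beta}\|\otimes\|\gamma\|)$ becomes $1-\|\mathbf{k}_{\gamma}^{\beta}\|\cdot\|\gamma\|$. The iterated t-norm $\bigotimes$ over $\mathcal{K}_\beta$ becomes the ordinary finite product $\prod$, which is well defined by associativity and commutativity of multiplication.

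\textbf{Necessary and deductive parts.} The inner negation turns $\|\delta\|$ into $1-\|\delta\|$ and $\|\theta\|$ into $1-\|\theta\|$. This gives the factors $1-\|\mathbf{n}_{\delta}^{\beta}\|(1-\|\delta\|)$ and $1-\|\mathbf{d}_{\beta}^{\theta}\|(1-\|\theta\|)$.

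\textbf{Evidential part.} $N^*\bigl(\bigotimes N^*(\cdot)\bigr)$ becomes $1-\prod(1-\|\mathbf{e}_{\varepsilon}^{\beta}\|\cdot\|\varepsilon\|)$, which is the probabilistic-sum t-conorm iterated over $\mathcal{E}_\beta$.

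Finally, the outer $\otimes$ joining the four parts is multiplication, which yields the displayed formula.

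The only point needing care, and the closest thing to an obstacle, is the empty-index convention. If one of $\mathcal{K}_\beta$, $\mathcal{N}_\beta$, $\mathcal{D}_\beta$ or $\mathcal{E}_\beta$ were empty, the empty t-norm iteration equals the neutral element $1$. This agrees with the empty product, so the two conventions match. In any case, Definition~\ref{HQAF} adds the auxiliary interactions $(\bot,\beta)$, $(\top,\beta)$, $(\beta,\top)$ and the evidential ones, so each of these sets is nonempty for $\beta\in\mathbb{U}$.

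I will also note that the resulting equational system, combined with Theorem~\ref{thm:cfoe_cfne_equivalence_hqaf}, coincides with the Product CFNE semantics $\mathfrak{LS}_{ec_n}^{\mathcal{PL}_{[0,1]}^P}$.
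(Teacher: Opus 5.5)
Your proposal is correct and matches the paper's proof, which is exactly this instantiation of Definition~\ref{def:cfoe_system_hqaf} with $N^*(x)=1-x$ and $\otimes_P(x,y)=x\cdot y$. Your remark about empty index sets is a harmless refinement: the auxiliary interactions of Definition~\ref{HQAF} make $\mathcal{K}_\beta$, $\mathcal{N}_\beta$, $\mathcal{D}_\beta$ and $\mathcal{E}_\beta$ nonempty for every $\beta\in\mathbb{U}$, so no empty-product convention is needed.
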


\begin{proof}
	The result follows immediately from Definition~\ref{def:cfoe_system_hqaf} by specifying the continuous negation as the standard negation $N^*(x)=1-x$ and the t-norm as the Product t-norm $\otimes_P(x,y)=x\cdot y$.
\end{proof}

Intuitively, the attack part aggregates the joint failure probability of all independent attacks: the success probability of a single attack is the product of the validity of the attack name and the acceptability of its source element; its failure probability is 1 minus this value; the joint failure probability of all attacks is the product of individual failure probabilities.

The necessary support part aggregates the joint satisfaction probability of all independent necessary supports: the violation probability of a single necessary support is the product of the validity of the support name and the invalidity of its source element; its satisfaction probability is 1 minus this value; the joint satisfaction probability is the product of individual satisfaction probabilities.

The deductive support part aggregates the joint satisfaction probability of all independent deductive supports: the violation probability of a single deductive support is the product of the validity of the support name and the invalidity of its target element; its satisfaction probability is 1 minus this value; the joint satisfaction probability is the product of individual satisfaction probabilities.

The evidential support part aggregates the probability that at least one independent evidential support succeeds: the success probability of a single evidential support is the product of the validity of the support name and the acceptability of its source element; the joint failure probability of all evidential supports is the product of individual failure probabilities; the probability that at least one support succeeds is 1 minus this value.

The final acceptability degree of $\beta$ is the joint probability that all attacks fail, all necessary supports are satisfied, all deductive supports are satisfied, and at least one evidential support succeeds.

\begin{thm}[Equivalence of Product CFOE and CFNE Semantics]
	For every HQAF $\mathcal{F}$, the Product-type continuous fuzzy operator-based equational semantics coincides with the Product fuzzy normal encoded semantics, i.e.,
	\[
	\mathfrak{LS}_{Eq_{[0,1]}^P}(\mathcal{F}) = \mathfrak{LS}_{ec_n}^{\mathcal{PL}_{[0,1]}^P}(\mathcal{F}).
	\]
\end{thm}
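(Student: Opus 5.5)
The plan is to derive this equivalence as a direct specialization of Theorem~\ref{thm:cfoe_cfne_equivalence_hqaf}, exactly as was done for the Gödel instance. The general theorem holds for any continuous negation $N^*$ and any continuous t-norm $\otimes$, with $\|a\leftrightarrow b\|$ interpreted through the residuated implication. So the work is to check that the Product logic $\mathcal{PL}_{[0,1]}^P$ fits that template, and that the Product-type CFOE system $eq_{[0,1]}^P$ is precisely $eq_{[0,1]}^{*,\otimes}$ under this choice.

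\textbf{Step 1: the logic is an instance of $\mathcal{PL}_{[0,1]}^{*,\otimes}$.} The negation $N^*(x)=1-x$ is continuous. The t-norm $T_P(x,y)=x\cdot y$ is continuous. The implication of $\mathcal{PL}_{[0,1]}^P$ is its residuum $I_P$. By the fact recalled in the preliminaries, $I_P(m,n)=I_P(n,m)=1$ iff $m=n$, so $\|a\leftrightarrow b\|=1$ iff $\|a\|=\|b\|$. This is the only property of $\leftrightarrow$ that the proof of Theorem~\ref{thm:cfoe_cfne_equivalence_hqaf} uses.

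\textbf{Step 2: disjunction matches.} In the general theorem, disjunction is the dual t-conorm $x\oplus y=N^*(N^*(x)\otimes N^*(y))$. For Product with standard negation this gives $1-(1-x)(1-y)$, the probabilistic sum. Iterating it over $\mathcal{E}_\beta$ yields $1-\prod_{\mathbf{e}_\varepsilon^\beta}(1-\|\mathbf{e}_\varepsilon^\beta\|\cdot\|\varepsilon\|)$, which is the evidential factor in the Product-type CFOE theorem. This step is where I would be most careful. If $\mathcal{PL}_{[0,1]}^P$ were read with $\vee$ as $\max$, the claimed equality would fail, so I would state explicitly that $\vee$ is the $N^*$-dual of $\otimes$, the convention already fixed in Definition~\ref{def:cfoe_system_hqaf} and in the proof of Theorem~\ref{thm:cfoe_cfne_equivalence_hqaf}.

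\textbf{Step 3: the equational systems coincide.} Substituting $\otimes=\cdot$ and $N^*(x)=1-x$ into each of the four factors of the CFOE equation gives the Product-type equation term by term. The attack, necessary and deductive factors become $1-\|\mathbf{k}\|\|\gamma\|$, $1-\|\mathbf{n}\|(1-\|\delta\|)$ and $1-\|\mathbf{d}\|(1-\|\theta\|)$. The auxiliary conditions are identical in both systems. Hence $\mathfrak{LS}_{Eq_{[0,1]}^P}(\mathcal{F})=\mathfrak{LS}_{Eq_{[0,1]}^{*,\otimes}}(\mathcal{F})$ for this choice of $(N^*,\otimes)$.

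\textbf{Step 4: conclude.} Theorem~\ref{thm:cfoe_cfne_equivalence_hqaf} then gives equality with $\mathfrak{LS}_{ec_n}^{\mathcal{PL}_{[0,1]}^P}(\mathcal{F})$ for every HQAF $\mathcal{F}$.
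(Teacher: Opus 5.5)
Your proposal is correct and takes the same route as the paper, whose proof is a one-line specialization of Theorem~\ref{thm:cfoe_cfne_equivalence_hqaf} to the standard negation $N^*(x)=1-x$ and the Product t-norm $x\cdot y$. Your extra checks make explicit what that one-liner leaves implicit. First, $\|a\leftrightarrow b\|=1$ iff $\|a\|=\|b\|$ under $I_P$. Second, $\vee$ must be read as the $N^*$-dual probabilistic sum, so that the iterated disjunction collapses to $1-\prod(1-\|\mathbf{e}_{\varepsilon}^{\beta}\|\cdot\|\varepsilon\|)$ when the negation is involutive.
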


\begin{proof}
	The result follows immediately from Theorem \ref{thm:cfoe_cfne_equivalence_hqaf} by specifying the continuous negation as the standard negation $N^*(x)=1-x$ and the t-norm as the Product t-norm $\otimes_P(x, y) = x \cdot y$.
\end{proof}

The Product fuzzy encoded semantics also inherits all general algebraic properties from the CFNE framework: continuity, commutativity, boundary conditions and monotonicity all hold, and the existence of solutions is guaranteed by Brouwer’s fixed-point theorem. The following corollaries present the relationships between the Product CFNE semantics and the adjacent complete labelling semantics.

\begin{cor}
	For any HQAF $F$ and any fuzzy assignment $\|\cdot\|: U \to [0,1]$,
	\[
	\|\cdot\| \vDash_{\mathfrak{LS}_{ec_n}^{\mathcal{PL}_{[0,1]}^P}} F \implies T_3(\|\cdot\|) \text{ is an adjacent complete labelling of } F.
	\]
\end{cor}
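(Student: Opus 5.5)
The plan is to obtain this corollary as a direct instance of Theorem~\ref{thm:ternarization_cfne_hqaf}, in the same way as the analogous Gödel corollary. That theorem requires a logic $\mathcal{PL}_{[0,1]}$ whose negation is continuous and whose t-norm is continuous and zero-divisor-free. Under these hypotheses, every CFNE model ternarizes to a solution of $eq_3^{HQ}$, and hence, by Corollary~\ref{thm:3val_equivalence_hqaf}, to an adjacent complete labelling. So the whole task is to check that $\mathcal{PL}_{[0,1]}^P$ meets these hypotheses.

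I will verify them in turn.

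\emph{Negation.} The negation of $\mathcal{PL}_{[0,1]}^P$ is the standard negation $N^*(x)=1-x$. It is continuous and satisfies $N^*(0)=1$ and $N^*(1)=0$, which are the only properties of $N^*$ used in Theorem~\ref{thm:cfoe_trifurcation_hqaf}.

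\emph{T-norm.} The Product t-norm $T_P(x,y)=x\cdot y$ is a polynomial, so it is continuous on $[0,1]^2$. It is zero-divisor-free because $[0,1]\subseteq\mathbb{R}$ has no zero divisors: $x\cdot y=0$ forces $x=0$ or $y=0$.

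\emph{Model sets.} The model set $\mathfrak{LS}_{ec_n}^{\mathcal{PL}_{[0,1]}^P}(\mathcal{F})$ is the CFNE model set of Definition~\ref{def:cfne_semantics_hqaf} with $\otimes=T_P$. By the Product CFOE/CFNE equivalence theorem just stated, it also coincides with the solution set of the Product-type CFOE system.

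With these checks done, I take any $\|\cdot\|\vDash_{\mathfrak{LS}_{ec_n}^{\mathcal{PL}_{[0,1]}^P}}F$ and apply Theorem~\ref{thm:ternarization_cfne_hqaf}. This shows that $T_3(\|\cdot\|)$ solves $eq_3^{HQ}$, and Corollary~\ref{thm:3val_equivalence_hqaf} then shows that it is an adjacent complete labelling of $F$.

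I do not expect a genuine obstacle. The only point needing care is the zero-divisor-freeness of $T_P$, since this is exactly what separates Product (and Gödel) from Łukasiewicz, for which $T_L(\tfrac12,\tfrac12)=0$. It enters Theorem~\ref{thm:cfoe_trifurcation_hqaf} at the step where an attack factor $N^*(\|\mathbf{k}_\gamma^\beta\|\otimes\|\gamma\|)=1$ must be read as ``$\|\mathbf{k}_\gamma^\beta\|=0$ or $\|\gamma\|=0$''. I will state this property explicitly so the reader sees why the argument fails for $T_L$.

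Unlike the Gödel case, I will not assert the converse. $T_P$ is not $\tfrac12$-idempotent, because $\tfrac12\cdot\tfrac12=\tfrac14$, so Theorem~\ref{thm:3val_is_cfne_hqaf} does not apply and only the one-directional statement follows.
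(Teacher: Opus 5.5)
Your proposal is correct and matches the paper's proof, which likewise notes only that the Product t-norm is zero-divisor-free and the standard negation is continuous, and then invokes Theorem~\ref{thm:ternarization_cfne_hqaf}. One small correction to a side remark: Theorem~\ref{thm:cfoe_trifurcation_hqaf} also uses the converses $N^*(x)=1\Rightarrow x=0$ and $N^*(x)=0\Rightarrow x=1$, not just $N^*(0)=1$ and $N^*(1)=0$. These hold for the standard negation, so your argument is unaffected.
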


\begin{proof}
	Since the Product t-norm is zero-divisor-free and the standard negation is continuous in $\mathcal{PL}_{[0,1]}^P$, the statement follows directly from Theorem \ref{thm:ternarization_cfne_hqaf}.
\end{proof}

\begin{cor}
	For any HQAF $F$,
	\[
	\left\{ T_3(\|\cdot\|) \mid \|\cdot\| \vDash_{\mathfrak{LS}_{ec_n}^{\mathcal{PL}_{[0,1]}^P}} \mathcal{F} \right\} \subseteq \left\{\|\cdot\| \mid \|\cdot\| \vDash_{\mathfrak{LS}_{ac}} \mathcal{F}\right\}.
	\]
\end{cor}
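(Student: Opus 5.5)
The inclusion is the set-level form of the preceding corollary, so I plan to derive it directly from Theorem~\ref{thm:ternarization_cfne_hqaf}. The plan has three steps, in this order.

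\emph{Step 1: check the hypotheses.} The logic $\mathcal{PL}_{[0,1]}^P$ uses the standard negation $N^*(x)=1-x$, which is continuous. It also uses the Product t-norm $x\cdot y$, which is continuous. This t-norm is zero-divisor-free, since $xy=0$ forces $x=0$ or $y=0$ in the reals. These are exactly the hypotheses of Theorem~\ref{thm:ternarization_cfne_hqaf}. By Definition~\ref{def:cfne_semantics_hqaf}, the Product CFNE semantics is the encoded semantics induced by $ec_n$ and $\mathcal{PL}_{[0,1]}^P$, so $\vDash_{\mathfrak{LS}_{ec_n}^{\mathcal{PL}_{[0,1]}^P}}$ is the relation $\vDash_{CFNE}$ appearing in that theorem.

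\emph{Step 2: unpack an arbitrary element.} Take an element of the left-hand set. It has the form $T_3(\|\cdot\|)$ for some fuzzy assignment $\|\cdot\|$ with $\|\cdot\|\vDash_{\mathfrak{LS}_{ec_n}^{\mathcal{PL}_{[0,1]}^P}}\mathcal{F}$. Theorem~\ref{thm:ternarization_cfne_hqaf} gives that $T_3(\|\cdot\|)$ is a solution of $eq_3^{HQ}$. Corollary~\ref{thm:3val_equivalence_hqaf} then shows it is an adjacent complete labelling, that is, $T_3(\|\cdot\|)\vDash_{\mathfrak{LS}_{ac}}\mathcal{F}$. Equivalently, one may cite the previous corollary directly.

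\emph{Step 3: confirm $T_3(\|\cdot\|)$ is an admissible labelling.} It must respect the auxiliary-element requirements of Definition~\ref{def:labelling_semantics_hqaf}. This holds because $T_3$ fixes the values $0$ and $1$, and every auxiliary element has value $0$ or $1$ under any CFNE model.

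No step is a real obstacle. The only point needing care is Step 1, checking zero-divisor-freeness of the Product t-norm, and that is immediate.

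I expect only the inclusion, not equality, to be provable with these tools. The Product t-norm is not $\tfrac12$-idempotent ($\tfrac12\cdot\tfrac12=\tfrac14$), so Theorem~\ref{thm:3val_is_cfne_hqaf} does not apply and gives no reverse inclusion.
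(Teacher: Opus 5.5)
Your proposal is correct and follows essentially the same route as the paper. The paper obtains the inclusion directly from the preceding corollary, which is Theorem~\ref{thm:ternarization_cfne_hqaf} applied to the continuous, zero-divisor-free Product t-norm with the standard negation, exactly as in your Steps 1--2. Your remark that only the inclusion is available, because $\tfrac12\cdot\tfrac12=\tfrac14$ rules out Theorem~\ref{thm:3val_is_cfne_hqaf}, is also consistent with the paper stating only an inclusion here.
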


\begin{proof}
	The result follows immediately from the preceding corollary.
\end{proof}

Unlike the idempotent Gödel semantics, the Product t-norm is non-idempotent: multiple independent attacks gradually accumulate and weaken the acceptability of the target, rather than being determined solely by the strongest attack. Similarly, multiple independent supports gradually enhance the acceptability. This cumulative feature makes Product semantics more suitable for modeling argumentative reasoning with additive strength.

\subsubsection{Łukasiewicz Fuzzy Encoded Semantics}

The Łukasiewicz t-norm $\otimes_L(x, y) = \max\{0, x+y-1\}$ corresponds to threshold-based additive aggregation. It models mutually exclusive or competitive attacks and supports: the total strength of attacks and supports accumulates linearly, and exerts a full effect only when the total exceeds a certain threshold. This makes it suitable for argumentation scenarios where different attacks and supports compete additively.

\begin{thm}[Łukasiewicz-type CFOE System]
	Let $\mathcal{F}=(\mathbf{A},\mathbf{R}_k,\mathbf{R}_n,\mathbf{R}_d$, $\mathbf{R}_e,\mathbf{U},\mathbf{P})$ be an HQAF and $\|\cdot\|:\mathbf{U}\to[0,1]$ be an assignment. If the negation is the standard negation $N^*(x)=1-x$ and the t-norm is the Łukasiewicz t-norm $\otimes_L(x,y)=\max\{0,x+y-1\}$, then the Łukasiewicz-type CFOE system $eq_{[0,1]}^L$ is given by the auxiliary-element conditions
	\[
	\|\bot\|=0,\quad \|\top\|=1,\quad \|\alpha\|=1\quad \bigl(\forall \alpha\in(\mathbf{R}_k\cup\mathbf{R}_n\cup\mathbf{R}_d\cup\mathbf{R}_e)\setminus(\mathbb{R}_k\cup\mathbb{R}_n\cup\mathbb{R}_d\cup\mathbb{R}_e)\bigr)
	\]
	and for each $\beta\in\mathbb{U}$, the fixed-point equation:
	\[
	\|\beta\| = \max\Bigl\{0,\; K_L(\beta) + N_L(\beta) + D_L(\beta) + E_L(\beta) - 3 \Bigr\},
	\]
	where the four semantic quantities are defined as follows:
	\[
	\begin{aligned}
		K_L(\beta) &= \max\left\{0,\; \sum_{\mathbf{k}_{\gamma}^{\beta}\in \mathcal{K}_{\beta}} \min\bigl\{1,\; 2 - \|\mathbf{k}_{\gamma}^{\beta}\| - \|\gamma\|\bigr\} - (|\mathcal{K}_{\beta}| - 1) \right\}, \\[2mm]
		N_L(\beta) &= \max\left\{0,\; \sum_{\mathbf{n}_{\delta}^{\beta}\in \mathcal{N}_{\beta}} \min\bigl\{1,\; 1 - \|\mathbf{n}_{\delta}^{\beta}\| + \|\delta\|\bigr\} - (|\mathcal{N}_{\beta}| - 1) \right\}, \\[2mm]
		D_L(\beta) &= \max\left\{0,\; \sum_{\mathbf{d}_{\beta}^{\theta}\in \mathcal{D}_{\beta}} \min\bigl\{1,\; 1 - \|\mathbf{d}_{\beta}^{\theta}\| + \|\theta\|\bigr\} - (|\mathcal{D}_{\beta}| - 1) \right\}, \\[2mm]
		E_L(\beta) &= \min\left\{1,\; |\mathcal{E}_{\beta}| - \sum_{\mathbf{e}_{\varepsilon}^{\beta}\in \mathcal{E}_{\beta}} \min\bigl\{1,\; 2 - \|\mathbf{e}_{\varepsilon}^{\beta}\| - \|\varepsilon\|\bigr\} \right\}.
	\end{aligned}
	\]
	Here $\mathcal{K}_{\beta}=\{\mathbf{k}_{\gamma}^{\beta}\in \mathbf{R}_k\}$, $\mathcal{N}_{\beta}=\{\mathbf{n}_{\delta}^{\beta}\in \mathbf{R}_n\}$, $\mathcal{D}_{\beta}=\{\mathbf{d}_{\beta}^{\theta}\in \mathbf{R}_d\}$, and $\mathcal{E}_{\beta}=\{\mathbf{e}_{\varepsilon}^{\beta}\in \mathbf{R}_e\}$.
\end{thm}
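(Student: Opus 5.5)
The plan is to obtain the Łukasiewicz system by direct instantiation of Definition~\ref{def:cfoe_system_hqaf}, in the same way as the Gödel and Product cases, with $N^*(x)=1-x$ and $\otimes=\otimes_L$. Unlike those cases, the closed forms here come from unfolding the iterated t-norm, so I will make that step explicit. The auxiliary-element conditions are copied verbatim from the definition, so all the work is in the fixed-point equation for $\beta\in\mathbb{U}$.

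The first step is a lemma on the $m$-fold iteration of $\otimes_L$:
\[
\bigotimes_{i=1}^{m} x_i=\max\Bigl\{0,\;\sum_{i=1}^{m}x_i-(m-1)\Bigr\}.
\]
I will prove it by induction on $m$. The case $m=1$ is trivial. For the inductive step, $\max\{0,\max\{0,S-(m-1)\}+x_{m+1}-1\}$ equals $\max\{0,S+x_{m+1}-m\}$: if $S-(m-1)<0$, then $x_{m+1}-1\le 0$ makes both sides $0$. The convention that the empty iteration equals $1$ (the t-norm identity) agrees with the formula at $m=0$, since it gives $\max\{0,1\}=1$. I will also note the case $m=4$ for the outer aggregation of the four parts, which produces $\max\{0,K_L+N_L+D_L+E_L-3\}$.

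Next I compute each single-interaction factor with $N^*(x)=1-x$:
\begin{itemize}
\item attack factor: $1-\max\{0,k+\gamma-1\}=\min\{1,2-k-\gamma\}$;
\item necessary-support factor: $1-\max\{0,n+(1-\delta)-1\}=\min\{1,1-n+\delta\}$;
\item deductive-support factor: the same form with $\theta$ in place of $\delta$;
\item evidential factor: each inner term is $\min\{1,2-e-\varepsilon\}$.
\end{itemize}
Applying the lemma over $\mathcal{K}_\beta$, $\mathcal{N}_\beta$ and $\mathcal{D}_\beta$ gives $K_L$, $N_L$ and $D_L$ directly.

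For $E_L$, the lemma first gives the inner product $\max\{0,\Sigma-(|\mathcal{E}_\beta|-1)\}$, where $\Sigma$ is the sum of the evidential factors. Applying the outer negation gives $1-\max\{0,\Sigma-|\mathcal{E}_\beta|+1\}=\min\{1,|\mathcal{E}_\beta|-\Sigma\}$, which is the stated expression.

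The main obstacle is bookkeeping rather than any real difficulty: keeping the $\max\{0,\cdot\}$/$\min\{1,\cdot\}$ conversions correct under negation, and checking the degenerate cardinalities. For the empty set cases, the formula gives $K_L=\max\{0,0+1\}=1$ when $\mathcal{K}_\beta=\emptyset$, and $E_L=\min\{1,0\}=0$ when $\mathcal{E}_\beta=\emptyset$. Both agree with the definition's conventions (empty conjunction $1$, empty disjunction $0$). In any case HQAF guarantees every $\beta\in\mathbb{U}$ has at least one attack and at least one evidential support via the auxiliary elements, and at least one necessary support. I will remark on this so that the formulas are applied only where they are meaningful.
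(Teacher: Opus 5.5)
Your proposal is correct and follows the paper's proof: instantiate Definition~\ref{def:cfoe_system_hqaf} with $N^*(x)=1-x$ and $\otimes_L$, then apply the closed form $\max\{0,\sum x_i-(m-1)\}$ of the iterated Łukasiewicz t-norm to each of the four parts and to the outer four-fold aggregation, converting $1-\max\{0,\cdot\}$ into $\min\{1,\cdot\}$ under negation. The only difference is that you prove the iterated formula by induction and check the empty-set cases, steps the paper simply asserts.
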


\begin{proof}
	The result follows immediately from Definition~\ref{def:cfoe_system_hqaf} by specifying the continuous negation as the standard negation $N^*(x)=1-x$ and the t-norm as the Łukasiewicz t-norm $\otimes_L(x,y)=\max\{0,x+y-1\}$. For a finite family $(x_i)_{i=1}^n$ in $[0,1]$, the iterated Łukasiewicz t-norm is
	\[
	\bigotimes_{i=1}^n x_i = \max\left\{0,\; \sum_{i=1}^n x_i - (n-1)\right\}.
	\]
	Applying these identities to the attack, necessary support, deductive support, and evidential support parts in the general CFOE equation yields the stated expressions for $K_L(\beta)$, $N_L(\beta)$, $D_L(\beta)$, and $E_L(\beta)$. The final aggregation over the four parts is again an iterated Łukasiewicz t-norm, giving
	\begin{align*}
			\|\beta\| &= K_L(\beta) \otimes_L N_L(\beta) \otimes_L D_L(\beta) \otimes_L E_L(\beta)\\
		&= \max\bigl\{0,\; K_L(\beta) + N_L(\beta) + D_L(\beta) + E_L(\beta) - 3\bigr\}.
	\end{align*}
	The auxiliary-element conditions are identical to those in Definition~\ref{def:cfoe_system_hqaf}.
\end{proof}

Intuitively, the Łukasiewicz semantics treats each interaction as contributing a certain amount of “strength” to a linear accumulator, and the final acceptability is determined by whether the total accumulated strength exceeds a threshold. 

The attack part $K_L(\beta)$ aggregates the degree to which all attacks against $\beta$ fail. For each attack $\mathbf{k}_{\gamma}^{\beta}$, the quantity $\min\{1, 2 - \|\mathbf{k}_{\gamma}^{\beta}\| - \|\gamma\|\}$ measures the failure degree of that attack: it equals $1$ when the attack name and its source element are jointly weak enough (i.e., their sum does not exceed $1$), and decreases linearly as their combined strength increases. The iterated Łukasiewicz t-norm over these failure degrees yields a value that is $1$ only if the total “failure strength” is large enough to compensate for the number of attacks; otherwise, the attacks collectively reduce the acceptability. 

The necessary support part $N_L(\beta)$ aggregates the degree to which all necessary supports for $\beta$ are satisfied. For each necessary support $\mathbf{n}_{\delta}^{\beta}$, the quantity $\min\{1, 1 - \|\mathbf{n}_{\delta}^{\beta}\| + \|\delta\|\}$ measures its satisfaction degree: it equals $1$ when the support name is invalid or its source element is fully acceptable, and decreases linearly as the support name becomes more valid and its source element becomes less acceptable. The iterated Łukasiewicz t-norm over these satisfaction degrees captures the requirement that all necessary supports must be jointly satisfied to a sufficient total degree.

The deductive support part $D_L(\beta)$ is analogous to the necessary support part, but it concerns deductive supports originating from $\beta$. For each deductive support $\mathbf{d}_{\beta}^{\theta}$, the quantity $\min\{1, 1 - \|\mathbf{d}_{\beta}^{\theta}\| + \|\theta\|\}$ measures its satisfaction degree, reflecting the requirement that the target element $\theta$ must be acceptable whenever the deductive support is valid.

The evidential support part $E_L(\beta)$ aggregates the degree to which at least one evidential support for $\beta$ succeeds. For each evidential support $\mathbf{e}_{\varepsilon}^{\beta}$, the quantity $\min\{1, 2 - \|\mathbf{e}_{\varepsilon}^{\beta}\| - \|\varepsilon\|\}$ measures its failure degree: it equals $1$ when the support name and its source element are jointly weak, and decreases as their combined strength increases. The inner Łukasiewicz t-norm over these failure degrees yields a value that is $0$ only if at least one evidential support has enough combined strength to succeed; otherwise, it is positive. The outer standard negation then converts this into a success degree: $E_L(\beta) = 1$ if at least one evidential support succeeds with sufficient strength, and $E_L(\beta) = 0$ if all evidential supports fail.

Finally, the acceptability degree of $\beta$ is obtained by combining the four parts using the Łukasiewicz t-norm, which acts as a threshold-based conjunction: the sum $K_L(\beta) + N_L(\beta) + D_L(\beta) + E_L(\beta)$ must exceed $3$ for $\beta$ to receive a positive acceptability degree. This reflects a competitive additive aggregation where the four conditions jointly determine the outcome, and a deficit in any one part can be compensated by a surplus in another, but only up to a certain threshold. 

The Łukasiewicz-type CFOE system is equivalent to the corresponding Łukasiewicz fuzzy normal encoded semantics:
\[
\mathfrak{LS}_{Eq_{[0,1]}^L}(\mathcal{F}) = \mathfrak{LS}_{ec_n}^{\mathcal{PL}_{[0,1]}^L}(\mathcal{F}),
\]
as guaranteed by the general equivalence theorem (Theorem~\ref{thm:cfoe_cfne_equivalence_hqaf}). Since the expanded form above is already explicit, and all core algebraic properties (continuity, monotonicity, boundary conditions, and existence of solutions) are special cases of the general CFNE framework, we omit a separate derivation of those properties here.

\section{Related Work}
\label{sec:related}

\paragraph{Abstract argumentation and bipolar frameworks}
Dung's AF \cite{Dung1995} models acceptability via attacks. BAF adds a support relation and combine it with attacks into complex attacks \cite{Cayrol2005,Amgoud2008,Cayrol2013}. However, BAF presents different interpretations of the support relation. Specialized interpretations include necessary support \cite{Nouioua2011,Nouioua2013}, deductive support \cite{Boella2010}, and evidential support \cite{Oren2008,Oren2010}. Comparative studies and translations between these interpretations are given in \cite{Cayrol2013,Polberg2014}. These works are foundational, but they typically consider at most one support type at a time. HQAF instead allows necessary, deductive, and evidential supports to coexist with attacks, and preserves their typical semantic behaviours.

\paragraph{Higher-order and recursive interactions}
Higher-order attacks have been studied for preferences and for defeating attacks without defeating their sources \cite{Barringer2005,Modgil2009}. AFRA and RAF generalize this to attacks targeting attacks \cite{Baroni2011,Cayrol2017}. ASAF \cite{Cohen2015,Gottifredi2018} combines recursive attacks with necessary support; RAFN \cite{Cayrol2018a} handles higher-order necessary supports; REBAF \cite{Cayrol2018b} handles higher-order evidential supports. Logical encodings for RAFN and REBAF are developed in \cite{Cayrol2020,Lagasquie-Schiex2021a,Lagasquie-Schiex2021b,Lagasquie-Schiex2023}. These frameworks are bipolar: they pair attacks with one support type. Moreover, support sources are typically sets of arguments. HQAF generalizes this by allowing attacks and all three support types to act as sources and targets, while retaining a uniform higher-order treatment.

\paragraph{Multipolar frameworks and encoded semantics}
Gabbay \cite{Gabbay2016} introduces multipolar and tripolar argumentation networks with extension-based labelling semantics, but the framework is not higher-order and lacks encoded semantics. Tang \cite{Tang2025} proposes HAFS, a higher-order framework with supports that allows attacks and supports as sources and targets, and encodes it into propositional logic. A related evidential higher-order set framework EHSAF is studied in \cite{Tang2026}. These works are close in spirit, but HAFS focuses on the necessary support relation, and EHSAF only employs the evidential support relation. HQAF fills the gap by providing a quadripolar framework where attacks, necessary supports, deductive supports, and evidential supports coexist and interact at higher order. When an​ HQAF is restricted to the case of​ a framework with a single support relation, the encoded semantics is identical to​ that of a​n HAFS or an​ EHSAF.

\paragraph{Logical encodings and equational semantics}
Dung's AF can be translated into logic programs \cite{Dung1995}, and propositional encodings for standard semantics are studied in \cite{Besnard2004}. Gabbay \cite{Gabbay2011,gabbay2012equational} introduces equational semantics over numerical values, later extended to bipolar and tripolar networks \cite{Gabbay2016} and to merging \cite{Gabbay2014}. For higher-order bipolar frameworks, first-order encodings are given in \cite{Cayrol2020,Lagasquie-Schiex2021a,Lagasquie-Schiex2021b,Lagasquie-Schiex2023}. In contrast, HQAF is encoded into Łukasiewicz three-valued logic and into continuous t-norm based fuzzy logics (Gödel, Product, Łukasiewicz). We prove equivalences between adjacent complete labellings, three-valued equational semantics, and the normal encoding, and relate fuzzy models to three-valued models via ternarization. This provides a lighter-weight logical foundation than first-order encodings.

\paragraph{Gradual approaches}
Gradual semantics for bipolar argumentation assign numerical strengths or rankings to arguments and interactions, providing a finer-grained view than extension-based semantics. Amgoud and Ben-Naim \cite{Amgoud2018,amgoudWeightedBipolarArgumentation2018} provide an axiomatic foundation for evaluating arguments in weighted bipolar graphs, where arguments have basic strengths and may be both supported and attacked; they propose a novel gradual semantics for acyclic graphs that satisfies all their axioms. Yun and Vesic \cite{Yun2021} define gradual semantics for weighted bipolar SETAF, where attacks and supports carry weights and arguments receive overall strength values. Gonzalez et al. \cite{Gonzalez2021} propose labeled bipolar argumentation frameworks, where labels represent additional argument features and are propagated through an algebra of labels. Wang and Shen \cite{Wang2026} propose convergent bilateral gradual semantics for weighted bipolar argumentation graphs, ensuring convergence even in cyclic cases. These approaches are valuable for ranking arguments and handling uncertainty, but they typically treat support as a single, undifferentiated relation: necessary, deductive, and evidential supports are not distinguished, and their different semantic constraints are not encoded separately. In contrast, the encoding-equational semantics developed in this paper explicitly distinguishes the three support types and assigns each of them a dedicated logical and equational treatment. In this sense, our work is not a replacement but a complement to gradual semantics: gradual semantics provide strength-based ranking, while HQAF provides a type-sensitive logical foundation for reasoning with multiple kinds of support and attack.

\paragraph{Comparison and novelty}
The gaps identified in the Introduction are addressed as follows.
\begin{itemize}
	\item \emph{Single vs.\ multiple support types:} existing bipolar frameworks consider one support interpretation \cite{Nouioua2011,Boella2010,Oren2008}; HQAF allows necessary, deductive, and evidential supports together with attacks.
	\item \emph{Higher-order but bipolar:} AFRA, ASAF, RAFN, and REBAF support recursive interactions but remain bipolar \cite{Baroni2011,Cohen2015,Cayrol2018a,Cayrol2018b}; HQAF is quadripolar and allows all four interaction types to be sources and targets.
	\item \emph{Encoding weight:} existing higher-order bipolar encodings are first-order or logic-programming based \cite{Cayrol2020,Lagasquie-Schiex2021a,Lagasquie-Schiex2021b,Lagasquie-Schiex2023,Alfano2024b}; HQAF is encoded into three-valued and fuzzy propositional logics, enabling lightweight solvers.
	\item \emph{Equivalence and cycles:} HQAF proves equivalence between adjacent complete labellings, three-valued equational semantics, and the normal encoding, and uses continuous fixed-point equations to guarantee solutions even with cycles.
\end{itemize}
In summary, HQAF does not replace existing bipolar or multipolar frameworks; it fills the gap of a uniform, higher-order, quadripolar setting in which three support types and attacks coexist, and it provides a lightweight logical encoding with proved semantic equivalences.

\section{Conclusion}
\label{sec:conclusion}

This paper has proposed a Higher-Order Quadripolar Argumentation Framework (HQAF) that unifies attacks, necessary supports, deductive supports, and evidential supports in a single setting. HQAF allows not only arguments but also attacks and supports to act as sources and targets of interactions, thereby providing a uniform treatment of higher-order interactions. We defined adjacent complete labelling semantics, 3-valued equational semantics, and encoded semantics for HQAF, and developed a normal encoding methodology that directly interprets HQAF in propositional logic systems: the 3-valued semantics is encoded into Łukasiewicz three-valued logic, while the fuzzy semantics is encoded into fuzzy propositional logics such as Gödel, Product, and Łukasiewicz logics. We proved model equivalence between HQAF and their encoded logical formulas, and investigated the relationships between 3-valued complete semantics and fuzzy encoded semantics via ternarization. In particular, the fuzzy equational semantics arises directly as an equivalent expression of the fuzzy encoded semantics, rather than being introduced separately.

A limitation of the present work is that we have focused on the non-set style of HQAF, where each interaction has a single source. This restriction simplifies the semantics and the encoding, but it leaves open the treatment of collective attacks and supports, i.e., interactions whose source is a set of arguments or interactions. As observed in the literature \cite{Polberg2014}, set-style supports are not semantically uniform: necessary set support is typically interpreted disjunctively (the acceptance of the target requires at least one member in each source set to be accepted), whereas evidential set support is interpreted conjunctively (the acceptance of the target requires each member in at least one source set to be accepted). It is also unclear how deductive set support should be interpreted in this respect. Moreover, for a higher-order quadripolar framework with collective interactions, it is difficult to transform set deductive supports into necessary supports simply by reversing the direction of support arrows. These issues motivate our decision to study the non-set style in this paper.

Future work will extend HQAF to the set style by developing a uniform semantics for collective attacks and for the three types of collective supports, and by investigating the relationships and possible translations among them. Another direction is to study other semantics, such as stable and preferred, and to explore the computational properties of the encoded theories. Finally, we plan to apply HQAF to real-world argumentation scenarios that require reasoning with multiple kinds of support and higher-order interactions.



\bibliographystyle{elsarticle-num} 
\bibliography{refHQabb}




\end{document}